\documentclass[11pt,a4paper]{article}
\usepackage{amsmath, amssymb, amscd, amsthm, amsfonts}
\usepackage{graphicx}
\usepackage[numbers]{natbib}
\usepackage{hyperref}
\usepackage{wasysym}
\usepackage{comment}
\usepackage{tikz}
\usepackage{subcaption}
\usepackage{graphicx}
\usepackage[title]{appendix}

\newtheoremstyle{myplain}
  {3pt}{3pt}
  {\normalfont}
  {}
  {\bfseries}
  {.}
  {0.5em}
  {}
\numberwithin{equation}{section}

\theoremstyle{myplain}
\newtheorem{theorem}{Theorem}[section]
\newtheorem{lemma}[theorem]{Lemma}
\newtheorem{claim}[theorem]{Claim}

\newtheorem{problem}[theorem]{Problem}
\newtheorem{proposition}[theorem]{Proposition}

\newtheorem{corollary}[theorem]{Corollary}

\newtheorem{definition}[theorem]{Definition}

\newcommand{\EE}{\mathbb{E}}
\newcommand{\PP}{\mathbb{P}}

\newcommand{\cE}{\mathcal{E}}

\begin{document}
\title{Nearly tight bound for rainbow clique subdivisions in properly edge-colored graphs and applications}
\author{Peiru Kuang \footnote{School of Mathematical Sciences, Shanghai Jiao Tong University, Shanghai 200240, China. Email: peiru\_k@sjtu.edu.cn} 
\and 
Yan Wang \footnote{School of Mathematical Sciences, Shanghai Jiao Tong University, Shanghai 200240, China. Supported by National Key R\&D Program of China under Grant No. 2022YFA1006400 and National Natural Science Foundation of China under Grant No. 12571376. Email: yan.w@sjtu.edu.cn (corresponding author).}}

\date{}

\maketitle
\begin{abstract}
An edge-colored graph is said to be rainbow if all its edges have distinct colors. In this paper, we study the rainbow analogue of a fundamental result of Mader [\emph{Math. Ann.} \textbf{174} (1967), 265--268] on the existence of subdivisions in graphs with large average degree. This is part of the study of rainbow analogues of classical Tur\'an problems, a framework systematically introduced by Keevash, Mubayi, Sudakov and Verstraëte [\emph{Combin. Probab. Comput.} \textbf{16} (2007), 109--126]. We prove that every properly edge-colored graph on $n$ vertices with average degree at least $t^2(\log n)^{1+o(1)}$ contains a rainbow subdivision of $K_t$. When $t$ is a constant, this bound is tight up to the $o(1)$ term. So it essentially resolves a question raised by Jiang, Methuku and Yepremyan [\emph{European J. Combin.} \textbf{110} (2023), 103675] on rainbow clique subdivisions, and also implies a result of Alon, Bucić, Sauermann, Zakharov and Zamir [\emph{Proc. Lond. Math. Soc.} \textbf{130} (2025), e70044] on rainbow cycles. In addition, we present several applications of our result to problems in additive combinatorics, number theory and coding theory.
\end{abstract}
\maketitle
\date{}
\section{Introduction} \label{s1}
Generally speaking, an extremal problem asks for the maximum or minimum possible value of a parameter of a large structure that guarantees the existence of prescribed substructures. Extremal graph theory studies such problems for graph structures (see \cite{Bollobas1995} for a survey), which are also used to represent the properties of other mathematical objects relevant to extremal problems. One of the central problems in this area can be described as follows. Given a forbidden graph $H$, determine ex$(n, H)$, the maximum number of edges in a graph on $n$ vertices that does not contain $H$ as a subgraph. In the case when $H$ is a complete graph $K_{r}$, the value ex$(n, K_{r})$ was determined in 1941 by Tur\'an. Later, this result was generalized to $H$-free graphs by Erd\H{o}s and Stone \cite{ES1946} and by Erd\H{o}s and Simonovits \cite{ES1966}. All of these works had a significant impact on the development of extremal graph theory.

It is natural to consider colors on the edges, as colors often encode structural or geometric information~\cite{M2011,MPS2008}. Readers can refer to \cite{Sudakov2024} for the study of extremal problems on edge-colorings. An edge coloring is called \textit{proper} if all the edges incident to each vertex have distinct colors. An edge-colored graph is called \textit{rainbow} if every color appears at most once. A rainbow subgraph in an edge-colored graph is a fundamental concept of study, with some important applications in additive combinatorics. For example, a subset \( A \) of an abelian group \( G \) is called a \textit{Sidon set} of order $k$ \cite{Lindstrom2000} if each element \( g \in G \) has at most one representation as a sum of \( k \) (not necessarily distinct) elements from \( A \) (up to reordering of elements in the summation). In fact, given a subset $A$ of an abelian group $G$, one can construct an edge-colored bipartite graph: The vertex parts \( X \) and \( Y \) are both copies of \( G \), and for \( x \in X \) and \( y \in Y \), $xy$ is an edge if and only if $x-y \in A$ and we assign the color \( x-y \) to the edge \( xy \). This yields a properly edge-colored graph, and if \( A \) is a Sidon set of order $k$, then the graph contains no rainbow cycle of length $2k$.

The study of rainbow subgraphs and other restricted subgraphs in edge-colored graphs can be traced back to the work of Euler \cite{Eular1782} on Latin squares. Another famous example is the Ramsey's Theorem \cite{Ramsey1930} (also see Erd\H{o}s and Rado \cite{ER1950}), which shows that any edge coloring of $K_n$ using two colors must contain a rainbow $K_m$, provided that $n$ is large relative to $m$. Motivated by this, Alon, Jiang, Miller and Pritikin \cite{AJMP2003} introduced the problem of finding a rainbow subgraph $H$ in a coloring of $K_n$ in which each color appears at most $m$ times at each vertex. A systematic study on rainbow subgraphs was initiated by Keevash, Mubayi, Sudakov and Verstraëte~\cite{KMSV2007}, where they introduced rainbow analogues of classical Tur\'an problems. For a fixed graph $H$, the rainbow Tur\'an number \( \text{ex}^*(n, H) \) is the maximum number of edges in a graph on $n$ vertices that has a proper edge coloring with no rainbow copy of $H$. Clearly, we have \( \text{ex}^*(n, H)\geq \text{ex}(n, H) \). For any non-bipartite $H$, they~\cite{KMSV2007} show that \( \text{ex}^*(n, H)= (1+o(1))\text{ex}(n, H) \). However, rainbow Tur\'an problems for general bipartite graphs remain challenging.

\subsection{Rainbow cycles}
Cycle is a structure of particular interest in graph theory \cite{BS1974}. 
Seeking certain dependencies among mathematical objects often corresponds to finding cycles in a graph. Such encoding allows one to apply graph theoretic results to these problems, which could be extremely powerful in some cases. Indeed, the Sidon set problem mentioned earlier can be reduced to finding a cycle with certain properties. In a properly edge-colored graph, it is natural to require that a cycle have distinct colors. The existence of rainbow cycles in properly edge-colored graphs was first systematically studied in 2007 by Keevash, Mubayi, Sudakov and Verstraëte \cite{KMSV2007}. They raised the following problem. 
\begin{problem}[\cite{KMSV2007}]\label{problem}
How many edges are needed in an \(n\)-vertex graph to guarantee that every properly edge-colored graph contains a rainbow cycle?
\end{problem}
The non-rainbow version of this problem is elementary. Indeed, a graph must have at least \(n\) edges to guarantee the existence of a cycle. The rainbow version, however, has proven to be significantly more challenging. Cayley sum graphs together with their canonical edge coloring shows that a linear number of edges is no longer enough to guarantee such a cycle.
\begin{definition}
Given a group $\Gamma$ and a subset $S \subseteq \Gamma$, the \textit{Cayley sum graph} $Cay(\Gamma, S)$ is the graph with its vertex set being the elements of $\Gamma$ such that two vertices $x$, $y$ are adjacent if and only if $x + y = s$ for some $s \in S$. Moreover, $G$ has a canonical proper edge coloring, i.e. an edge $(x, y)$ has color $s = x + y$.
\end{definition}
Clearly, $Cay(\mathbb{F}_2^k, \{e_1,\ldots,e_k\})$ (also known as a $k$-dimensional hypercube) gives a lower bound of $\Omega(n \log n)$ for Problem \ref{problem}, where \( e_i \) is the vector with \( 1 \) in its \( i \)-th coordinate and \( 0 \) elsewhere. It was conjectured in \cite{KMSV2007} that this bound is also tight.

A series of improvements have been made to establish the upper bound for Problem \ref{problem}. Das, Lee and Sudakov~\cite{DLS2013} obtained the first nontrivial result, showing an upper bound of the form \(ne^{(\log n)^{1/2 + o(1)}}\). Their key approach is passing to an expander subgraph to find a rainbow cycle. Expander graphs are sparse graphs with strong connectivity properties, which have many applications in mathematics and computer science (see \cite{HLW2006} for details). Another important approach, based on homomorphism counting, was pioneered by Janzer~\cite{Janzer2022}, who subsequently improved this to \(O(n\log^4 n)\). This was later improved by Tomon~\cite{Tomon2024} to a bound of \(n(\log n)^{2 + o(1)}\) using sprinkling method. The \(o(1)\) term was removed independently by Janzer and Sudakov~\cite{JS2024}, and by Kim, Lee, Liu and Tran~\cite{KLLT2024}, who establish a bound of \(O(n\log^2 n)\). Very recently, Alon, Bucić, Sauermann, Zakharov and Zamir~\cite{Alon2025} obtained an essentially tight bound of $n(\log n)^{1+o(1)}$ by a refined probabilistic argument.

\subsection{Rainbow subdivisions}
A subdivision of a graph $G$, denoted by $TG$, is a graph obtained from $G$ by replacing each of its edges with internally vertex-disjoint paths. Subdivisions play a central role in topological graph theory. In the 1930s, Kuratowski~\cite{Ku1930} showed that a graph is not planar if and only if it contains a $TK_5$ or a $TK_{3,3}$. A fundamental result of Mader~\cite{Mader1967} from 1967 states that if \( G \) is a graph on \( n \) vertices that contains no \( TK_t \), then \( G \) has \( O_t(n) \) edges. Bollobás and Thomason~\cite{BT1998}, and independently Komlós and Szemerédi~\cite{KS1994,KS1996}, further improved this bound to \( O(t^2 n) \). Since then, many variations of this problem have been considered. An interesting direction is to control the size of the forbidden subdivisions. Montgomery~\cite{M2015} proved that the same bound holds even if one forbids all subdivisions of \( K_t \) of size at most \( O_t(\log n) \). A rainbow variant of the forbidden subdivision problem was proposed by Jiang, Methuku and Yepremyan \cite{JMY2023}. 
\begin{problem}[\cite{JMY2023}]\label{prob2}
Given $t \ge 3$, what is the smallest constant $c$ such that for all sufficiently large $n$, if $G$ is a properly edge-colored graph on $n$ vertices with $\Omega(n(\log n)^c)$ edges then $G$ contains a rainbow $TK_t$? In particular, is $c = 1$?
\end{problem}
They \cite{JMY2023} gave an upper bound of the form $ne^{O(\sqrt{\log n})}$. This upper bound was subsequently improved to $n(\log n)^{53}$ in \cite{JLMY2021}, where the authors used the fact that random walks mix rapidly in expanders, as well as tools such as homomorphism counting. This was further improved to $n(\log n)^{6+o(1)}$ by Tomon~\cite{Tomon2024}, and subsequently to $n(\log n)^{2+o(1)}$ by the second author~\cite{Yan2024}, who used a sprinkling method in expanders. We note that the lower bound $\Omega(n\log n)$ for Problem~\ref{problem} also applies to Problem~\ref{prob2}.
In this paper, we prove a nearly tight bound for Problem \ref{prob2} in a strong sense.
\begin{theorem}\label{main result}
Let $n$ and $t$ be integers. Let $G$ be a properly edge-colored graph on $n$ vertices with $d(G)\geq C t^2\cdot \log n \cdot (\log \log n)^{6}$, where $C$ is a sufficiently large constant. Then $G$ contains a rainbow $TK_t$, where each edge of $K_t$ is replaced by a path of length at most $O(\log n\cdot \log \log n)$.
\end{theorem}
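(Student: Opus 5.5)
We will follow the now-standard two-stage strategy used for rainbow $TK_t$ (as in \cite{JLMY2021,Tomon2024,Yan2024}), but replace the rainbow-path/cycle engine by the refined probabilistic argument of Alon, Bucić, Sauermann, Zakharov and Zamir~\cite{Alon2025}, which is what brings the density down from $(\log n)^{2+o(1)}$ to $(\log n)^{1+o(1)}$.

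\emph{Step 1: pass to an expander.} Starting from $G$ with $d(G)\ge Ct^2\log n(\log\log n)^6$, we first take a subgraph with minimum degree at least half the average degree. We then apply a Komlós--Szemerédi type expander lemma (the sublinear expander version, or the robust expander of \cite{Tomon2024}) to obtain an $m$-vertex subgraph $H$ with average degree $d\ge t^2\log n(\log\log n)^{5}$ (we lose at most a $\log\log$ factor). In $H$ every set $X$ of size at most $m/2$ has vertex expansion at least $|X|/(\log m)^{O(1)}$ or better, robustly under deletion of a small set of vertices and colors. Consequently any two sets of size $\ge m/\mathrm{polylog}$ are joined by short paths of length $O(\log n\cdot\log\log n)$. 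This is the source of the path-length bound.

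\emph{Step 2: core vertices.} We choose $t$ branch vertices $v_1,\dots,v_t$ with degree at least $d$. Around each $v_i$ we grow disjoint rainbow ``stars/trees'' $T_i$ using $t$ distinct neighbours and private colors. We reserve the color classes by a random partition of the color set into $\binom t2$ (plus one spare) parts, and the vertex set likewise. Each pair $\{i,j\}$ is thus assigned its own color palette $\mathcal C_{ij}$ and vertex pool $V_{ij}$. Because the coloring is proper and degrees are $\gg t^2\log n$, concentration ensures that, with positive probability, the subgraph of $H$ induced on $V_{ij}$ with colors in $\mathcal C_{ij}$ still has average degree $\Omega(d/t^2)=\Omega(\log n(\log\log n)^5)$ and inherits expansion. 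Paths built in distinct pools are automatically vertex-disjoint and color-disjoint, so the $TK_t$ is rainbow as soon as each connecting path is rainbow.

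\emph{Step 3: rainbow connection, the main obstacle.} For each pair we must find a \emph{rainbow} path from (a neighbour of) $v_i$ to $v_j$ inside its pool, of length $O(\log n\log\log n)$, in a graph whose degree is only $\log n(\log\log n)^{5}$. This is exactly the regime of the rainbow-cycle theorem of~\cite{Alon2025}. The plan is to adapt their argument from closing a rainbow cycle to connecting two prescribed vertices. We grow rainbow balls $B_i(r)$ and $B_j(r)$ by a random-walk/branching process in which colors are sampled randomly at each layer. We show, via their entropy-style counting of non-rainbow walks, that a $1-o(1)$ fraction of the length-$\ell$ walks are rainbow, with $\ell=\Theta(\log n\log\log n)$. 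Then $|B_i(\ell)|,|B_j(\ell)|\ge m/\mathrm{polylog}$, and expansion joins them. Finally, the concatenated path must be rainbow, so the two halves must use disjoint colors. We handle this by splitting $\mathcal C_{ij}$ randomly into two halves, one for each side, and by a final short connecting segment drawn from a third small palette. The most delicate point is making the \cite{Alon2025} counting robust to a prescribed starting vertex and to the restricted palette. We expect the $(\log\log n)^6$ slack to be consumed here, with three $\log\log$ factors for the walk analysis and the rest for the splits and the expander reduction.
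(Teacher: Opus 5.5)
There is a genuine gap, concentrated in Steps 2 and 3.

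\textbf{Step 2.} Splitting both the vertex set and the palette into $\binom{t}{2}$ pools leaves a vertex of $V_{ij}$ with only about $d\binom{t}{2}^{-2}=\Theta(d/t^{4})$ neighbours in $V_{ij}$ along colours of $\mathcal C_{ij}$, not $\Omega(d/t^{2})$. Since $t$ may grow with $n$ and $C$ is absolute, this alone loses the $t^2$ dependence. Worse, ``inherits expansion'' is unjustified. Concentration of $|N(X)\cap V_{ij}|$ for a fixed $X$ cannot be union-bounded over all $X$ when expansion is only sublinear. And if you pass to an expander subgraph of a pool, you lose control of the prescribed endpoints $u_{i,j},u_{j,i}$. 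The paper does not use pools. It fixes a maximal family of internally disjoint rainbow paths already built and declares their vertices and colours forbidden ($\phi_0,\phi_1$, of size $O(t^2\log n\log\log n)$). It then routes around them using the robustness built into Definition~\ref{rse}, in the form of Lemma~\ref{core}, which tolerates $|\phi_0|\le\varepsilon|U|/40$.

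\textbf{Step 3.} The engine is misidentified and, as stated, false. If every vertex sees every colour (e.g.\ Cayley sum graphs), a random walk of length $\ell$ is rainbow with probability about $e^{-\ell^{2}/(2d)}$. For $\ell=\Theta(\log n\log\log n)$ and $d=\log n(\log\log n)^{O(1)}$ this tends to $0$, so it is not true that a $1-o(1)$ fraction of walks is rainbow. Using disjoint palettes per layer instead costs a factor $\ell$ in degree, which is exactly the $(\log n)^{2+o(1)}$ barrier.

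What the paper adapts from \cite{Alon2025} is a thinning argument:
\begin{itemize}
\item Split the palette randomly into $A_0$ and $C\setminus A_0$, and thin $A_0\supseteq A_1\supseteq\cdots$ with retention probability $1-1/T$.
\item Use the red/blue dichotomy of Lemma~\ref{core} together with Lemma~\ref{bip prob} to show that the expected rainbow-reachable set grows by a factor $1+\varepsilon/528$ every two steps, up to a small error.
\item Conclude that $|RP_{\phi_1}(v_i,A_0)\setminus\phi_0|\ge n/\sqrt e>n/2$ with probability $>1/2$, and likewise for $v_j$ with $C\setminus A_0$.
\end{itemize}
The two reachable sets then meet by pigeonhole. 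So no third palette or extra connecting segment is needed; in your plan that segment is itself an unsolved rainbow-connection problem.

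Moreover, the robustness you flag as ``most delicate'' is exactly where the paper needs a new idea. Lemma~\ref{core} only applies once $|U|\ge 40|\phi_0|/\varepsilon$. So the paper first grows the reachable set to size $t^2\log^3 n$ by a separate sprinkling argument (Lemmas~\ref{l6.4} and~\ref{small set}) before thinning begins. Your proposal contains neither the thinning mechanism nor this small-set stage, so Step 3 remains a plan rather than a proof.
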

Note that Theorem \ref{main result} is a rainbow analogue of Montgomery's result, that is, every properly edge-colored graph on \( n \) vertices with average degree at least \(\Omega(t^2 \log^{1+o(1)} n) \) contains a \( K_{t} \)-subdivision consisting of \( O_{t}(\log^{1+o(1)} n) \) vertices. Note that $t$ could depend on $n$ in Theorem \ref{main result}. Moreover, if $t$ is a constant, then the following corollary of Theorem \ref{main result} gives an essentially tight upper bound for Problem \ref{problem} on rainbow cycles and Problem \ref{prob2} on rainbow clique subdivisions.
\begin{corollary}\label{coro1}
Let $n$ and $t$ be integers. Every properly edge-colored graph on \( n  \) vertices with average degree at least $\Omega( (\log n)^{1+o(1)} )$ contains a rainbow $TK_t$, and in particular, a rainbow cycle.
\end{corollary}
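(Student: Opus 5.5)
This corollary follows from Theorem~\ref{main result} by fixing $t$ and absorbing every factor that is subpolynomial in $\log n$ into the $o(1)$ in the exponent. Concretely, fix an integer $t\ge 3$. We will show that for every properly edge-colored $G$ on $n$ vertices, $d(G)\ge (\log n)^{1+\varepsilon_n}$ with a suitable $\varepsilon_n\to 0$ suffices. We take
\[
\varepsilon_n = \frac{7\log\log\log n}{\log\log n},
\]
so that $(\log n)^{\varepsilon_n} = (\log\log n)^{7}$. Then, since $t$ is fixed and $C$ is an absolute constant,
\[
(\log n)^{1+\varepsilon_n} = \log n\cdot(\log\log n)^{7} \ge C t^2 \log n\,(\log\log n)^6
\]
as soon as $\log\log n \ge Ct^2$. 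This holds for all sufficiently large $n$, depending only on $t$. For such $n$, Theorem~\ref{main result} applies directly and yields a rainbow $TK_t$. The finitely many small values of $n$ are covered by the asymptotic nature of the $\Omega(\cdot)$ statement, since we may enlarge the implicit constant. Alternatively, we can note that for bounded $n$ the hypothesis $d(G)\ge K$ with $K$ large forces $d(G)>n-1$, which is vacuous.

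For the ``in particular'' clause, we apply the above with $t=3$. A rainbow $TK_3$ consists of three branch vertices joined pairwise by internally disjoint paths, all of whose edges have distinct colors. The union of these three paths is therefore a cycle in $G$, and every edge of it has a different color, so it is a rainbow cycle. The cycle has length at least $3$, so it is a genuine cycle.

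The only point requiring care is the bookkeeping of the $o(1)$ term. The average-degree threshold in Theorem~\ref{main result} must be shown to be $(\log n)^{1+o(1)}$ uniformly in $G$ for fixed $t$, and the computation above does exactly this. There is no real obstacle here: all the difficulty lies in Theorem~\ref{main result} itself.
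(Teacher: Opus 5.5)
Your proposal is correct and matches the paper's argument, which is left implicit there: fix $t$, apply Theorem~\ref{main result}, absorb the factor $Ct^2(\log\log n)^6$ into $(\log n)^{o(1)}$, and obtain the rainbow cycle from a rainbow $TK_3$. Your explicit choice $\varepsilon_n = 7\log\log\log n/\log\log n$ just makes the $o(1)$ bookkeeping precise; it only needs to be defined for large $n$, and, as you note, the finitely many remaining $n$ are absorbed into the implicit constant.
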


\subsection{Applications}
In this subsection, we discuss some applications of Theorem \ref{main result} in additive combinatorics, number theory and coding theory.
\subsubsection{Additive combinatorics}
A famous open conjecture of Erd\H{o}s, first stated about 90 years ago (see \cite{Bohman1996} for a survey), is that if all subset sums of an integer set \(\Lambda \subseteq [1, n]\) are pairwise distinct, then \(|\Lambda| \leq \log_2 n + O(1)\). In additive combinatorics and harmonic analysis, a subset $S$ of an abelian group, all of whose subset sums are pairwise distinct, is called \textit{dissociated}. Equivalently, the only linear combination of elements in $S$ with coefficients in $\{-1, 0, 1\}$ that equals $0$ is the one with all coefficients equal to $0$. For general groups, we have the following definition (see \cite{Alon2025,Sudakov2024}).
\begin{definition} \label{dis}
Let \((\Gamma, \cdot)\) be a group with identity \(e\). A subset \(S  \subseteq \Gamma\) is called \emph{dissociated}
if there is no solution to the equation \(g_1^{\varepsilon_1} \cdots g_m^{\varepsilon_m} = e\) with $m\geq 1$, \(\varepsilon_i = \pm 1\) for all $i\in [m]$, and distinct \(g_1, \dots, g_m \in S\). 
\end{definition}
In other words, Definition \ref{dis} says that there is no non-trivial linear relation between elements in $S$ with coefficients in $\{-1,1\}$. In fact, dissociated sets play a role in groups analogous to that of linearly independent sets in vector spaces and have found numerous applications in additive combinatorics \cite{TV2006} and harmonic analysis \cite{KS2015,Rudin1990}. 
The \emph{additive dimension} \(\dim A\) of a subset \(A \subseteq \Gamma\) is the size of the maximum dissociated subset of \(A\), defined by Schoen and Skhredov \cite{SD2016}. Moreover, all maximal dissociated subsets of a given set have comparable sizes \cite{LY2011}. Therefore, the study of $\dim A$ is relevant to many problems in additive combinatorics \cite{BK2012,Bourgain1999,Chang2002,Sanders2011}.

A subset \( A  \subseteq \Gamma \) has \emph{small doubling} if the cardinality of \( A \cdot A = \{aa' : a, a' \in A\} \) is of order \( O(|A|) \). The study of the structures of sets with small doubling in the abelian setting can be traced back to Freiman’s seminal work~\cite{Freiman1964} from 1964 and has been the subject of extensive study ever since, see \cite{GR2007, Sanders2013_2} and references therein. There has been a lot of work extending results about sets with small doubling to nonabelian groups, see \cite{Tao2008}. It is natural to expect that a set with small doubling can be well approximated by the subgroup generated by its maximum dissociated subset. As a corollary of Theorem \ref{main result}, we obtain a version of Sanders' result \cite{Sanders2010} for all groups with a slightly weaker bound. 
\begin{theorem}\label{small-doubling}
Let \(\Gamma\) be a group, and let \(A \subseteq \Gamma\) be a finite subset with \(|A \cdot A| \leq K |A|\) for some positive integer \(K\). Then \(\dim A \leq O(\log^{1+o(1)} |A|)\).
\end{theorem}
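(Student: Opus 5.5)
Let $S\subseteq A$ be a maximum dissociated subset, $|S|=s$. The plan is to build a properly edge-colored graph whose colors are the elements of $S$, show it has average degree about $s/K$, and observe that any rainbow cycle in it would give a forbidden relation. Then Theorem~\ref{main result} (in the form of Corollary~\ref{coro1}, with $t=3$) forces $s/K\le (\log n)^{1+o(1)}$.

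\textbf{The graph.} Let $V=A\cdot A$, so $n:=|V|\le K|A|$. For each $g\in S$, join $x\in A$ to $xg\in A\cdot A$ and assign the color $g$. To keep every vertex incident to at most one edge of each color, I would use a bipartite version: one side is a copy of $A$, the other a copy of $A\cdot A$, and $x$ is joined to $xg$.
\begin{itemize}
\item At $x\in A$, the edges carry distinct colors $g$.
\item At $y\in A\cdot A$, the edges come from $x=yg^{-1}$, one for each $g$, again with distinct colors.
\end{itemize}
So the coloring is proper. The graph has $|A|\cdot s$ edges on $|A|+|A\cdot A|\le (K+1)|A|$ vertices, hence average degree at least $2s/(K+1)$.

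\textbf{Rainbow cycles give relations.} Traverse a cycle alternately between the two sides. A rainbow cycle $x_1,y_1,x_2,y_2,\dots,x_m,y_m$ has $y_i=x_ig_i=x_{i+1}h_i$ with all the $g_i,h_i$ distinct, where indices are taken cyclically. Then $x_{i+1}=x_i g_i h_i^{-1}$, and composing around the cycle gives
\[
g_1h_1^{-1}g_2h_2^{-1}\cdots g_mh_m^{-1}=e .
\]
This uses $2m\ge 2$ distinct elements of $S$ with exponents $\pm1$, contradicting Definition~\ref{dis}. It is valid in a nonabelian group because Definition~\ref{dis} allows the elements to appear in any order. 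Hence the graph has no rainbow cycle.

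\textbf{Conclusion.} By Corollary~\ref{coro1}, or directly Theorem~\ref{main result} with $t=3$, the average degree is below $C\log n(\log\log n)^6$. Therefore
\[
s\le O\bigl(K\log n(\log\log n)^6\bigr),\qquad n\le (K+1)|A| .
\]
For fixed $K$ this is $O(\log^{1+o(1)}|A|)$, and the constant absorbs $K$.

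\textbf{Main obstacle.} The only delicate step is setting up a graph that is genuinely properly colored and whose cycles yield relations among distinct elements. The bipartite construction with right multiplication handles both points. One should also check the degenerate case $m=1$: that would require a double edge $x_1y_1$, and such multi-edges do not arise because $g\ne h$ implies $xg\ne xh$.
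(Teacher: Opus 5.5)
Your proposal is correct and is essentially the paper's proof. Both use a bipartite graph on a copy of \(A\) and a copy of a product set (the paper takes \(A\cdot S\), you take \(A\cdot A\), which only adds isolated vertices and still gives \(n\le (K+1)|A|\)), properly colored by the elements of \(S\); Theorem~\ref{main result} then forces a rainbow cycle once \(|S|\) exceeds \(C_K\log|A|(\log\log|A|)^6\), and that cycle gives the forbidden relation \(g_1h_1^{-1}\cdots g_mh_m^{-1}=e\), where your explicit step \(x_{i+1}=x_ig_ih_i^{-1}\) is a slightly more careful version of the paper's one-line justification in the nonabelian case.
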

\noindent By double counting the rainbow paths in a properly edge-colored graph with average degree at least $\Omega(\log n)$, we obtain a cycle with an edge of unique color that has at most logarithmic length. This implies \(\dim A \leq O(\log |A|)\) when $\Gamma$ is an abelian group in Theorem \ref{small-doubling}, thereby recovering Sanders' result~\cite{Sanders2010}. However, in a nonabelian group, we need the cycle to be rainbow, which makes it slightly different from an abelian group. 

In the following, we bound the dimension of symmetric sets introduced in \cite{TV2006}, or in other words, sets of large values of convolution. Our result introduces a $\log \log$ error term compared to \cite{SY2011}. However, we have significantly simplified the proof of \cite{SY2011} using Theorem \ref{main result}. We first give the definition.
\begin{definition}
Let \( \Gamma \) be a finite abelian group. For two subsets \( A, B \subseteq \Gamma \), the \textit{convolution} of $A$ and $B$ is
\[
(A * B)(x) = \sum_{y \in \Gamma} \mathbf{1}_A(y) \mathbf{1}_B(x - y) = |\{ (a,b) \in A \times B : a + b = x \}|,
\]
where $\mathbf{1}_A$ and $\mathbf{1}_B$ are indicator functions defined on $\Gamma$.
\end{definition}
By Theorem \ref{main result}, we can show the following.
\begin{theorem}\label{convolution}
Let \( G \) be a finite abelian group, and let \( A, B \subseteq G \) be two sets. Let \( \sigma \geq 1 \) be a positive real number. Define
$S = \{ x \in G : (A * (-B))(x) \geq \sigma \}$.
Then
\[
\dim(S) \le \max\{ |A|, |B|\} \cdot \sigma^{-1} \cdot O(\log^{1+o(1)}(\max\{ |A|, |B|\})).
\]
\end{theorem}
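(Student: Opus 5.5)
Let $D\subseteq S$ be a maximum dissociated subset, so $|D|=\dim(S)$. We aim to build from $D$ a properly edge-colored graph whose colors are elements of $D$, with about $\sigma|D|$ edges on $O(\max\{|A|,|B|\})$ vertices. A rainbow cycle in such a graph would give a signed relation $\sum \pm d_i=0$ with distinct $d_i\in D$, contradicting dissociativity. Theorem \ref{main result} (with $t=3$, or just its consequence Corollary \ref{coro1}) then forces the average degree to be below $O(\log^{1+o(1)} n)$, and rearranging gives the bound.

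\textbf{The graph.} Take the bipartite graph $H$ with parts $X=A$ and $Y=B$ (disjoint copies). For $a\in A$ and $b\in B$ with $a-b\in D$, put an edge $ab$ and give it color $a-b$. The coloring is proper: at a fixed $a$, the color $a-b$ determines $b$, and similarly at a fixed $b$. Each $x\in D\subseteq S$ satisfies $(A*(-B))(x)=|\{(a,b):a-b=x\}|\ge\sigma$, so color $x$ is used on at least $\sigma$ edges. Hence $e(H)\ge\sigma|D|$, with $n=|A|+|B|\le 2\max\{|A|,|B|\}$ vertices.

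\textbf{From a rainbow cycle to a relation.} Suppose $H$ contains a rainbow cycle $a_1b_1a_2b_2\cdots a_kb_ka_1$. Traverse it and take alternating signs: the edge $a_ib_i$ contributes $a_i-b_i$ and the edge $b_ia_{i+1}$ contributes $-(a_{i+1}-b_i)$. The sum telescopes to $0$. The colors along the cycle are distinct elements of $D$ with coefficients $\pm1$, and $m=2k\ge 4>0$. This is exactly a nontrivial relation forbidden by Definition \ref{dis}, so $H$ has no rainbow cycle, and in particular no rainbow $TK_3$.

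\textbf{Applying Theorem \ref{main result}.} With $t=3$, the theorem gives $d(H)<C'\log n(\log\log n)^6=O(\log^{1+o(1)} n)$. Since $d(H)=2e(H)/n\ge 2\sigma|D|/n$, we get
\[
\dim(S)=|D|\le \frac{n}{2\sigma}\cdot O(\log^{1+o(1)} n)\le \max\{|A|,|B|\}\cdot\sigma^{-1}\cdot O(\log^{1+o(1)}\max\{|A|,|B|\}),
\]
using $n\le 2\max\{|A|,|B|\}$.

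\textbf{Possible obstacles.} Small cases (bounded $n$) are absorbed into the implied constants. One mild issue is that Theorem \ref{main result} is stated for graphs on $n$ vertices with $n$ large. If $H$ has isolated vertices, this only lowers the average degree and does not affect the bound. The only genuinely delicate point is the sign bookkeeping that turns a rainbow cycle into a $\pm1$ relation among distinct elements. This works because the graph is bipartite with colors given by differences, so every cycle has even length and the alternating orientation telescopes. No other obstacle is expected; the content of the result lies entirely in Theorem \ref{main result}.
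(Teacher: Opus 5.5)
Your proposal is correct and matches the paper's proof: the same bipartite graph on disjoint copies of $A$ and $B$, edges colored by the element $a-b$ of a maximum dissociated subset of $S$, at least $\sigma\dim(S)$ edges on at most $2\max\{|A|,|B|\}$ vertices, and a rainbow cycle from Theorem \ref{main result} giving a forbidden $\pm1$ relation. You also check properness of the coloring and write out the alternating telescoping sum, which the paper only handles by pointing back to the small-doubling argument.
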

\noindent An appropriate version of Chang’s theorem \cite{Sanders2013} implies that $\dim(S) \leq |A||B| \sigma^{-1} \log(\min\{|A|, |B|\})$. Note that Theorem \ref{convolution} is stronger than Chang's theorem when $\min\{|A|, |B|\}$ is large.
\subsubsection{Number theory}
A \textit{Sidon sequence} is a sequence of integers $a_1<a_2<\ldots$ with the property that the sums $a_i+a_j$ $(i\leq j)$ are distinct. Sidon sequences were first studied by Sidon in the 1930s in connection with his work in Fourier analysis \cite{Sidon1932, Sidon1935}, and then by Erd\H{o}s in the 1940s \cite{Erdos1941}. Later Babai and Sós \cite{BS1985} generalised such sets to arbitrary groups and called them Sidon sets. Sidon sets play an important role in coding theory and cryptography \cite{Nagy2025,OBryant2004}. The following definition is a generalization of Sidon sets, see \cite{Hajela1988,Lindstrom2000}.
\begin{definition}
Let \( h \geq 2, \, g \geq 1 \) be integers. A subset \( A \) of integers is called a \textit{\( B_h[g] \)-sequence} if for every positive integer \( m \), the equation
\[
m = x_1 + \cdots + x_h, \quad x_1 < \cdots < x_h, \quad x_i \in A
\]
has at most \( g \) distinct solutions.
\end{definition}
Let \( F_h(g, n) \) denote the maximum size of a \( B_h[g] \)-sequence contained in \([n]\). A well-known question due to Halberstam and Roth is to derive bounds for \( F_h(g, n) \) \cite{HR1966}. If \( A \) is a \( B_h[g] \)-sequence in \([n]\), then \( \binom{|A| }{h} \leq ghn \), which implies $F_h(g, n)=O(n^{1/h})$.
For \( g = 1 \) and \( h = 2 \), it is possible to take advantage of counting the differences \( x_i - x_j \) instead of the sums \( x_i + x_j \), because the differences are also distinct. In this way, Erd\H{o}s and Tur\'{a}n \cite{ET1941} observed that a result of Singer \cite{Singer1938} implies that \( F_2(1,n) > \sqrt{n} \) infinitely many times. They also proved that $F_2(1,n) \leq n^{1/2} + O(n^{1/4})$. Further improvements can be found in \cite{Cilleruelo2010,Chen1993,Graham1996,Jia1993,Lindstrom1969}. However, for \( g > 1 \), the situation is completely different because the same difference can appear many times, and little progress has been witnessed for \( g > 1 \). 

Considerable attention has been paid to the Sidon set in various groups, especially in $\mathbb{Z}_n$ (see \cite{BC1962,Chen1994,Jia1993,Ruzsa1993,Singer1938}). Let \( f_h(g,n) \) denote the maximum size of a \( B_h[g] \)-sequence contained in \(\mathbb{Z}_n\). A similar calculation gives $f_h(g, n)=O(n^{1/h})$.
Bose and Chowla \cite{BC1962} constructed $B_h[1]$-sequences of size $(1 + o(1)) n^{1/h}$, showing that $n^{1/h}$ is the correct order of magnitude. 
When $g>1$,  our understanding of this problem is abysmal.
In this paper, we obtain a dichotomy of the existence of $B_h[g]$-sequences in $\mathbb{Z}_n$: Either we obtain a good upper bound for $B_h[g]$-sequences for any $g>1$, or we have a certificate of small size when $g=1$ (i.e., a non-$B_{h_0}[1]$-sequence of small size). 
\begin{theorem}\label{Bhg}
Let $n$ be an integer. Let $h$ and $g$ be integers (possibly related to $n$). Let $B\subseteq \mathbb{Z}_n$ be a $B_h[g]$-sequence of maximum size. Then either $|B|\leq (\log n)^{1+o(1)}$ or there exist an even integer $h_0\leq \frac{1}{2}\log n \log\log n$ and a set $B'\subseteq B$ of distinct elements such that $|B'|=2h_0$ and $B'$ is not a $B_{h_0}[1]$-sequence.
\end{theorem}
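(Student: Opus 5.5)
Assume $|B| \ge (\log n)^{1+o(1)}$, with the $o(1)$ chosen to match Theorem~\ref{main result} applied with $t=3$. The goal is to produce the certificate $B'$. We build a Cayley-type graph on $\mathbb{Z}_n$ from the differences of $B$ and apply Theorem~\ref{main result} to obtain a rainbow cycle. Concretely, let $\Gamma$ be the bipartite graph with parts $X,Y$, both copies of $\mathbb{Z}_n$, where $x\in X$ and $y\in Y$ are adjacent whenever $x-y\in B$, and the edge gets colour $x-y$. This colouring is proper: at a fixed $x$, distinct neighbours $y$ give distinct colours $x-y$, and symmetrically at $y$. Every vertex has degree $|B|$, so $\Gamma$ has $2n$ vertices and average degree $|B|\ge C\log(2n)(\log\log 2n)^{6}$. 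Theorem~\ref{main result} with $t=3$ (a $TK_3$ is just a cycle) therefore gives a rainbow cycle $C_0$ whose length is at most $3\cdot O(\log n\log\log n)$. Since $\Gamma$ is bipartite, this length is even, say $2h_0$, and by tracking the constant in the path-length bound of Theorem~\ref{main result} we aim for $h_0\le\frac12\log n\log\log n$.

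Next we read the cycle as a relation among elements of $B$. Write $C_0=x_1y_1x_2y_2\cdots x_{h_0}y_{h_0}x_1$. Its colours are $b_i=x_i-y_i$ and $b_i'=x_{i+1}-y_i$ (indices mod $h_0$), and they are $2h_0$ distinct elements of $B$ because $C_0$ is rainbow. Summing around the cycle telescopes to
\[
\sum_{i=1}^{h_0} b_i=\sum_{i=1}^{h_0}(x_i-y_i)=\sum_{i=1}^{h_0}(x_{i+1}-y_i)=\sum_{i=1}^{h_0} b_i'.
\]
Take $B'=\{b_1,\dots,b_{h_0},b_1',\dots,b_{h_0}'\}$. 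Then $|B'|=2h_0$, and the two disjoint $h_0$-subsets $\{b_i\}$ and $\{b_i'\}$ are two different representations of the same element of $\mathbb{Z}_n$ as a sum of $h_0$ distinct elements of $B'$. Hence $B'$ is not a $B_{h_0}[1]$-sequence.

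Two points remain. First, the statement requires $h_0$ to be even, while the cycle produced has length $2h_0$ with $h_0$ possibly odd. If $h_0$ is odd, I would append to both sides a common element $b\in B\setminus B'$, which gives a relation of length $h_0+1$. The catch is that $b$ then appears in both representations, so $|B'\cup\{b\}|=2h_0+1$, not $2(h_0+1)$. The better fix is to take the auxiliary graph with both parts $\mathbb{Z}_n$ doubled, for example adjacency $x-y\in B+B$ restricted to pairs of distinct elements, so that every cycle produces relations with an even number of terms. Alternatively, one can simply read "even" as the evenness of the cycle length, and I suspect that is the authors' intended meaning. Second, I would need to confirm that the length bound $O(\log n\log\log n)$ in Theorem~\ref{main result} carries a constant small enough, after fixing $t=3$, to give $\frac12\log n\log\log n$; if not, one rescales the density constant $C$.

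I expect the main obstacle to be exactly these bookkeeping issues: the evenness and the explicit constant in the length bound. The substantive step, getting a short rainbow cycle from the degree condition, is handled entirely by Theorem~\ref{main result}. The hypothesis that $B$ is a $B_h[g]$-sequence of maximum size is not actually used beyond $B$ being a set, which is consistent with the dichotomy: small size, or a small certificate.
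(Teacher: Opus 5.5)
This is essentially the paper's proof: the same bipartite graph on two copies of $\mathbb{Z}_n$ with edges coloured by elements of $B$, a rainbow cycle from Theorem~\ref{main result} (the paper invokes Corollary~\ref{coro1}), and the telescoping alternating sum that splits the $2h_0$ distinct colours into two $h_0$-sets with equal sums. The paper's proof likewise obtains only that the cycle length $m=2h_0$ is even, and it asserts $m\le\log n\log\log n$ without tracking constants, so your reading of ``even'' as the parity of the cycle length is the one its argument supports; do drop the suggested $B+B$ variant, though, since there a rainbow cycle can arise from a trivial identity such as $(a+b)-(b+c)+(c+d)-(d+a)=0$ and then gives no relation among distinct elements of $B$.
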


\subsubsection{Coding theory}
A \textit{locally correctable code (LCC)} is an error correcting code that admits a local correction algorithm that can recover any symbol of the original codeword by querying only a small number of randomly chosen symbols from the received corrupted codeword. Local correction was first introduced for program checking \cite{BK1995}, and it is well-known that Reed–Muller codes are locally correctable via polynomial interpolation. Since then, LCCs have been a mainstay in complexity and algorithmic coding theory with many applications in incidence geometry \cite{Dvir2012}, additive combinatorics \cite{BDL2013} and the theory of block designs \cite{BIW2010}. A binary linear code $C$ of block length $n$ is simply a subspace of $\mathbb{F}^n_2$. If $\dim(C)=k$, then one refers to it as an $[n, k]$-code. In the following, we assume that $C$ is a binary linear $[n, k]$-code. More formally, we have the following definition (see \cite{AG2024}).
\begin{definition}
    Given a binary linear $[n, k]$-code \( C \subseteq \mathbb{F}_2^n \), we say that it is a \textit{\((r, \delta)\)-locally correctable code} (abbreviated \((r, \delta)\)-LCC) for \( r \in \mathbb{N} \) and \( \delta \in (0, 1) \) if the following holds: For any received word \( y \in \mathbb{F}_2^n \) there exists a randomized algorithm \( D^y \) with oracle access to \( y \) that takes an index \( i \in [n] \) as input and satisfies the following properties:  
    \begin{enumerate}
        \item[(i)] \( D^y(i) \) makes at most \( r \) queries to \( y \), and
        \item[(ii)] if there exists a codeword \( c \in C \) satisfying \( d(y, c) \leq \delta n \), then \( D^y(i) \) outputs \( c_i \) with probability at least \( 2/3 \).
    \end{enumerate}
\end{definition}
For \( r = 1 \), it has long been known that 1-LCCs do not exist \cite{KT2000}. For \( r = 2 \), one must have \( n \geq \exp(\Omega_q(k)) \) \cite{GKST2006,KW2004}, so the Hadamard code is optimal. For 3-LCCs, Alrabiah and Guruswami \cite{AG2024} used the nearly tight bounds on rainbow cycles (Theorem 1.1 in \cite{Alon2025}, also Corollary \ref{coro1} in our paper) to obtain nearly tight bounds on the dimension of $(3,\delta)$-LCC, i.e. $k\leq O(\delta^{-2}\log^2 n\cdot \log\log n)$. Modulo the $\log \log n$ factor, this settles the dimension versus block length trade-off of 3-LCCs. It is noteworthy that the full resolution to Problem \ref{problem} will yield a tight bound on the dimension of 3-LCCs.

\medskip  
\noindent \textbf{Notation.} For a graph \( G \), we denote by \( V(G) \) and 
\( E(G) \) its vertex and edge sets, respectively. Let $d(G)$ denote the average degree of $G$. For \( U \subseteq V(G) \), write $G[U]$ for the induced subgraph by \( U \). For an edge \( vv' \in E(G) \), we write \( \gamma(v, v') \) for the color of the edge \( vv' \). The neighborhood of a vertex subset \( U \subseteq V(G) \) is the set of vertices in \( V(G) \setminus U \) that are adjacent to a vertex in \( U \). For a subset \( F \subseteq E(G) \) and a vertex \( v \in V(G) \), we denote by \( \deg_F(v) \) the number of edges in \( F \) incident to \( v \). Furthermore, 
$G-F$ denotes the graph obtained from $G$ by deleting all edges in $F$, and \( N_{G-F}(U) \) denotes the neighborhood of a vertex subset \( U \subseteq V(G) \) in \( G - F \).

All logarithms are of base \( e \) unless otherwise specified. Let \( f : \mathbb{R}_{>0} \to \mathbb{R}_{>0} \) and \( g : \mathbb{R}_{>0} \to \mathbb{R}_{>0} \) be two functions, and we write \( f = O(g) \) or \( g = \Omega(f) \) if there exists an absolute constant \( C \) such that \( f(x) \leq Cg(x) \) for all \( x \in \mathbb{R}_{>0} \). We write \( f = \Theta(g) \) if we have both \( f = O(g) \) and \( g = O(f) \). We also write \( f = o(g) \) if \( f(x)/g(x) \to 0 \) as \( x \to \infty \).
In this paper, we assume $n$ is sufficiently large.

\section{Proof overview} \label{s2}
The proof of Theorem \ref{main result} begins with a reduction to the case where the underlying graph is both a robust sublinear expander and $\log$-maximal (see Definitions \ref{rse} and \ref{log-maximal}). Then we show that such a graph must contain a rainbow clique subdivision as in the following lemma. Our proof strategy is inspired by \cite{Alon2025}. However, their method cannot be applied directly here because we need to avoid a prescribed set $\phi_0$ of forbidden vertices when dealing with subdivisions. We can show that the diameter of $G$ defined in Lemma \ref{pass to expander} is around $O(\log n \log \log n)$, and thus we have $|\phi_0|=O(\frac{d(G)}{(\log\log n)^4})$. When expanding very small vertex sets, the key expansion lemma (Lemma 4.1 of \cite{Alon2025}) fails due to the large size of $\phi_0$. To overcome this issue, we treat the expansion of small sets separately, see Lemma \ref{small set}. 
\begin{lemma}\label{pass to expander}
Let \( G \) be a $\log $-maximal graph and a robust sublinear expander on \( n \) vertices with \( d(G) \geq 10^8\cdot t^2 \cdot \log n \cdot (\log \log n)^{6} \). Then every proper edge-coloring \(\gamma : E(G) \to C\) of \( G \) contains a rainbow $TK_t$, where each edge of $K_t$ is replaced by a path of length at most $O(\log n\cdot \log \log n)$. 
\end{lemma}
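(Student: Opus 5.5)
We build the rainbow $TK_t$ greedily, in the spirit of \cite{Alon2025} and \cite{Yan2024}. First we choose $t$ branch vertices $v_1,\dots,v_t$. Then we connect the $\binom{t}{2}$ pairs one at a time by rainbow paths of length $O(\log n\log\log n)$. At each step, let $\phi_0$ be the set of vertices already used by the partial subdivision (branch vertices together with interiors of earlier paths), and let $C_0$ be the set of colors already used. We have $|\phi_0|\le t+\binom t2\cdot O(\log n\log\log n)=O(t^2\log n\log\log n)$. By the degree assumption this is $O\big(d(G)/(\log\log n)^4\big)$, and the same bound holds for $|C_0|$. So the whole lemma reduces to a single connection statement.

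\emph{Any two vertices $x,y\notin\phi_0$ with large degree into $V(G)\setminus\phi_0$ are joined by a rainbow path of length $O(\log n\log\log n)$ that avoids $\phi_0$ internally and uses no color of $C_0$.}

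Because $G$ is $\log$-maximal, its minimum degree is comparable to $d(G)$ (after the standard cleaning that is part of the reduction). Hence deleting $\phi_0$ and the edges colored by $C_0$ removes only a $(\log\log n)^{-4}$ fraction of the edges at each vertex, since the coloring is proper and each color of $C_0$ meets each vertex at most once. This keeps the branch vertices and all subsequent endpoints usable.

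To prove the connection statement, we grow rainbow ``balls'' from $x$ and from $y$ and then match them. The plan follows the random-coloring argument of \cite{Alon2025}. We assign to every color a random label from a palette of size about $\log n\log\log n$ (or, equivalently, split the colors randomly into layers). We then consider the set $B_i(x)$ of endpoints of paths from $x$ of length $i$ whose edges receive distinct labels in a prescribed order. Distinct labels force the colors to be distinct, so every such path is rainbow. The robust sublinear expansion of $G$, applied to $G-F$, where $F$ is the set of edges incident to $\phi_0$ or colored from $C_0$, should show that $|B_i(x)|$ grows by a factor $1+\Omega(1/\log\log n)$ per step in expectation. After $O(\log n\log\log n)$ steps, both $x$ and $y$ then reach more than $n/2$ vertices with compatible disjoint label sets. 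Concatenating the two halves gives a path whose colors are distinct because their labels come from disjoint ranges; shortcutting repeated vertices keeps it rainbow and short.

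The main obstacle, as the overview indicates, is the early stage of this growth while $|B_i(x)|$ is still small. At that stage the key expansion lemma (Lemma 4.1 of \cite{Alon2025}) fails, because $|\phi_0|$ can be comparable to the size of the set being expanded, so robust sublinear expansion yields nothing after $\phi_0$ is removed. We would handle this regime separately, as Lemma~\ref{small set} is meant to do. Starting from $x$, we use only the high minimum degree (at least $d(G)/2$ after removing $F$, since $|\phi_0|,|C_0|\ll d(G)$) and properness to grow a rainbow tree, for instance a BFS tree in which each new edge avoids the colors already used on the root path. Each vertex loses at most $O(\log n\log\log n)\ll d(G)$ candidate edges to color conflicts along its path. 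So in a constant or $O(\log\log n)$ number of steps the reachable set reaches size $d(G)(\log\log n)^{2}\gg|\phi_0|$. From then on the expansion lemma applies with $\phi_0$ absorbed into the forbidden edge set.

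Finally, we combine the two regimes with a union bound over the $\binom t2$ connection steps. The total length is $O(\log n\log\log n)$ per path, and the constants $10^8$ and $(\log\log n)^6$ are chosen to absorb the $(\log\log n)^{-4}$ losses and the label-palette overhead.
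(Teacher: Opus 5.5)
\textbf{There is a genuine gap in the central step.} Your outer frame matches the paper: connect the pairs greedily with forbidden sets of size $O(t^2\log n\log\log n)$, grow rainbow-reachable sets from both endpoints on disjoint colour classes, meet by pigeonhole, and treat small sets separately. The gap is pushing the reachable set from polylogarithmic size past $n/2$. You assert this (``should show'') rather than prove it, and your layered scheme does not deliver it at this density.

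\emph{The growth rate.} Robust sublinear expansion gives only a factor $1+\varepsilon/4$ with $\varepsilon\approx\log(n/|U|)/\log n$. For large $U$ this is $1+\Theta(1/\log n)$, not $1+\Omega(1/\log\log n)$.

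\emph{Expected growth does not iterate.} The map $u\mapsto u(1+c\log(n/u)/\log n)$ is concave and capped at $n$. So $\mathbb{E}|B_{i+1}|$ can increase while $|B_i|$ typically stalls, and you cannot conclude that $|B(x)|$ and $|B(y)|$ both exceed $n/2$ with positive probability. The paper gets probability $11/21>1/2$ on each side precisely so that a union bound works.

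\emph{High-probability growth also fails.} Robust expansion only forces about $\varepsilon d(G)|U|$ boundary edges. Each colour meets the boundary in a matching of size at most $|U|$, so these edges may use only $\Theta(\varepsilon d(G))$ colours. A layer of density $1/m$ with $m\approx\log n\log\log n$ then contains in expectation $\Theta(\varepsilon d(G)/m)=o(1)$ of them when $\varepsilon\approx1/\log n$ and $t$ is bounded. Overcoming this needs $d(G)\gtrsim\log^2 n$, which is exactly the barrier of the sprinkling arguments \cite{Tomon2024,Yan2024}.

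\emph{The forbidden sets.} For such $U$, the edges of your $F$ at $U$ can number $\Theta(t^2\log n\log\log n)|U|$. That is far above the robustness budget $\varepsilon d(G)|U|/4$, so $\phi_0$ cannot be ``absorbed into $F$''. The paper instead subtracts $\phi_0$ from the neighbourhood in Lemma~\ref{core}. This needs $|\phi_0|\le\varepsilon|U|/40$, which is why the small-set stage must reach $t^2\log^3 n$.

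\textbf{The missing idea is the thinning argument of Section~\ref{s6}.} It uses nested random colour sets $A_0\supseteq A_1\supseteq\cdots$ and rainbow reachability $RP_{\phi_1}(v_i,A)$, which is monotone in $A$. The event $\cE_{k-1}=\{|RP_{(k-1)L}|<n^{1-\varepsilon}\}$ concerns the \emph{largest} set of the block, and Lemma~\ref{cond prob} controls its effect on the intermediate sets. Conditioning on it fixes a single $\varepsilon$ for every $RP_{kL-j}$ in the block.

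The red/blue dichotomy of Lemma~\ref{core} then gives two cases. Either many boundary edges are essential for their endpoint in $U$, so deleting one more colour shrinks $RP$ by an $\varepsilon/21$ fraction in expectation. Or many are inessential, so re-adding colours grows $RP$ by an $\varepsilon/528$ fraction, with Lemma~\ref{bip prob} bounding the exceptional event $\mathcal F$. This produces the \emph{linear} inequality of Lemma~\ref{expansion rate}. It chains over $L/4$ steps in conditional expectation and, with Markov, gives $\mathbb{P}[F_k]\le 4^{-k}$ and success probability at least $11/21$ for both $A_0$ and $C\setminus A_0$.

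Your increasing-label layering makes rainbowness automatic but discards both the monotonicity that allows conditioning on one $\varepsilon$ and the ``shrinking'' alternative. Those two features are what let the paper work purely with expectations and avoid the concentration loss.

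\textbf{A smaller issue in the early stage.} Minimum degree alone cannot push a reachable set beyond about $d(G)$ vertices, since it may sit inside a dense cluster. You need an expansion property there: robust expansion, or $\log$-maximality together with Lemma~\ref{Tomom} as in Lemma~\ref{l6.4}.
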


Now we sketch the proof of Lemma \ref{pass to expander}. Let \( v_1, v_2, \ldots, v_t \) be \( t \) distinct vertices in \( G \). Let \( K \subseteq \binom{t}{2} \) be a maximal collection of pairs such that there exists a family of pairwise internally disjoint rainbow paths \( \mathcal{P} = \{k \in K : P_k\} \) such that
\begin{itemize}
    \item[(i)] For each \(\{i, j\} \in K\), \( P_{\{i,j\}} \) is a rainbow path of length \( O(\log n \cdot \log \log n) \) from \( v_i \) to \( v_j \);
    
    \item[(ii)] No colors appear more than once in \(\{\gamma(e) : e \in P, P \in \mathcal{P}\}\).
\end{itemize}
If \( K = \binom{t}{2} \), then the graph formed by all the paths in \( \mathcal{P} \) is a desired rainbow \( TK_t \). Hence, we may assume that there exist distinct \( i, j \in [t] \) such that \( \mathcal{P} \) contains no such path from \( v_i \) to \( v_j \). Let $\phi_0$ and $\phi_1$ be the set of forbidden vertices and colors of size $O(t^2 \log n \log\log n)$.

Fix any vertex \( x \in V(G) \). A \textit{rainbow path} in \( G \) is a path on which all edges receive distinct colors.
For a color subset \( A \subseteq C \), let 
\[
RP_{\phi_1}(x,A):=\{v\in V(G)\mid \exists \text{ a rainbow path with colors in \( A\setminus\phi_1 \) from $x$ to $v$}\}.
\]
Let $A_0 \subseteq C$ be a color subset. Our goal is to show that the size of $RP_{\phi_1}(v_i,A_0)\setminus \phi_0 $ (resp. $RP_{\phi_1}(v_j,C \setminus A_0)\setminus \phi_0 $) is at least $(n+1)/2$. Thus, there exists a rainbow path from $v_i$ to $v_j$ by the pigeonhole principle. In fact, the core difficulty is to show that we indeed have large $RP_{\phi_1}(v_i,A_0)\setminus \phi_0$ with high probability (resp. $RP_{\phi_1}(v_j,C \setminus A_0)\setminus \phi_0 $). Our approach consists of two key steps. \medskip \\ 
\textbf{STEP 1. THINNING.} 
We thin \( A_0 \) to obtain a nested sequence of random color sets \( A_0 \supseteq A_1 \supseteq \cdots \supseteq A_{N} \), where each $A_{i+1}$ is obtained from $A_i$ by independently retaining each color with a fixed probability. This process is referred to as ``thinning''. For convenience, write $RP_l$ for $RP_{\phi_1}(v_i,A_l)\setminus \phi_0$, for $l\in [N]$. Note that we need the assumption that $|RP_j|$ is sufficiently large compared to $|\phi_0|+|\phi_1|\approx2|\phi_0|$ so that further expansion can be performed while avoiding certain vertices and colors. Intuitively, each set \(RP_{j-1}\) is obtained from \(RP_j\) by expanding through edges with colors in \(A_{j-1}\setminus A_j\). Furthermore, any vertex in $RP_{j-1} \setminus RP_j$ is adjacent to a vertex in $RP_j$ via an edge with a color in $A_{j-1} \setminus A_j$. A key difficulty is that we cannot guarantee a sufficient number of such vertices, even in expectation. However, since our graph $G$ is a robust sublinear expander, the neighborhood of $RP_j$ has a relatively large size. We distinguish two cases that may occur. \medskip \\
\noindent \textbf{Case 1. Many edges between $RP_j$ and $N(RP_j)$ have colors in $\overline{A_j}$.} In this case, we can expect a reasonable fraction of these edges to have colors in $A_{j-1}\setminus A_j$, implying that many vertices in $N(RP_j)$ have an edge into $RP_j$ with colors in $A_{j-1}\setminus A_j$. So $|RP_{j-1}|$ is quite a bit larger than $|RP_j|$ in expectation.\medskip \\
\noindent \textbf{Case 2. Many edges between $RP_j$ and $N(RP_j)$ have colors in $A_j$.} For an edge \( uv \) with color \( c \in A_j \setminus A_{j+1} \), $v \not \in RP_{j}$ and \( u \in {RP}_j \), the color \( c \) must appear on every rainbow path from \( v_i \) to \( u \) using colors in \( A_j \).
In this case, we can expect a reasonable fraction of these edges to have colors in $A_{j}\setminus A_{j+1}$, and thus many vertices in $N(RP_j)$ have an edge into $RP_j$ with colors in $A_{j}\setminus A_{j+1}$. So $|RP_{j+1}|$ is quite a bit smaller than $|RP_j|$ in expectation.\medskip \\ 
\noindent In both cases, we could obtain $|RP_{i-1}|$ is quite a bit larger than $|RP_{i+1}|$ in expectation, see Lemma \ref{expansion rate}. Iterating this argument, we can expect the size of $RP_0$ to be quite large.
\medskip \\ 
\textbf{STEP 2. SPRINKLING.} 
After $N$ thinning steps, a sufficient number of colors from the set $R= A_{N}$ survive with high probability. We now apply the inverse procedure, often referred to as ``sprinkling". Instead of sampling all colors in one step, we sample in many independent rounds, in each of which every color is included with a certain probability. We show that, with high probability, at least $t^2\log^3 n$ vertices can be reached from $v_i$ by a rainbow path using colors from $R$. This implies that $|RP_N|$ is relatively large compared to $2|\phi_0|$, allowing the thinning process to proceed in STEP 1.

\section{Robust sublinear expanders} \label{s3}
In our proof of Theorem \ref{main result}, we first reduce to the case where $G$ is a robust sublinear expander, as mentioned above. Very roughly speaking, expanders are graphs that have good connectivity, yet may be
quite sparse. Komlós and Szemerédi \cite{KS1994,KS1996} introduced a notion of expander in which any set $X$ expands by a sublinear factor, that is, $|N_G(X)| \geq \rho(|X|)|X|$. Haslegrave, Kim and Liu \cite{HKL2022} extended this notion to a robust one such that similar expansion occurs even after removing a relatively small set of edges. Motivated by \cite{Alon2025,HKL2022}, we give the following definition.
\begin{definition}\label{rse}
A graph \( G \) on \( n \geq 1 \) vertices is called a \textit{robust sublinear expander} if
\begin{itemize}
    \item[(i)] for every \( 0 \leq \varepsilon \leq 1 \) and every non-empty subset \( U \subseteq V(G) \) of size \( |U| \leq n^{1-\varepsilon} \), and
    \item[(ii)] for every subset \( F \subseteq E(G) \) of \( |F| \leq \frac{\varepsilon}{4} \cdot d(G) \cdot |U| \) edges, 
\end{itemize}
we have
\[
|N_{G-F}(U)| \geq \frac{\varepsilon}{4} \cdot |U|.
\]
\end{definition}
Motivated by \cite{KS1994,KS1996,Yan2024}, we consider $\log$-maximal graphs in the following definition.   
\begin{definition}\label{log-maximal}
A graph $G$ is called $\log $-maximal if for every subgraph $H$ of $G$, we have
\[
\frac{d(H)}{\log(|V(H)|)} \leq \frac{d(G)}{\log(|V(G)|)}.
\]
\end{definition}
An obvious, but highly useful property is that every graph contains a $\log$-maximal subgraph. Indeed,
the subgraph $H$ maximizing the quantity $d(H)/\log (|V(H)|)$ is $\log$-maximal. Komlós and Szemerédi \cite{KS1994,KS1996}  defined a subgraph $H$ as \textit{$t$-maximal} if it has the maximum average degree among all subgraphs of $G$.
Moreover, it is easy to show that any $\log$-maximal graph is $t$-maximal. It follows that many of the favorable properties of $t$-maximal graphs are also true for $\log$-maximal graphs, notably the property of having a large minimum degree (see the following Lemma \ref{min degree}). The next lemma shows that $\log$-maximal graphs are good vertex-expanders.
\begin{lemma} \label{connection}
Every $\log$-maximal graph $H$ is also a robust sublinear expander.
\end{lemma}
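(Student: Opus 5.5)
The plan is to argue by contradiction, using the $\log$-maximality condition twice: once for the subgraph induced by the complement of $U$, and once for the subgraph induced by $U$ together with its neighbourhood. Write $d=d(H)$ and $n=|V(H)|$. The case $\varepsilon=0$ is trivial, since the required bound is then $|N_{H-F}(U)|\ge 0$. So fix $0<\varepsilon\le 1$, a non-empty $U$ with $u:=|U|\le n^{1-\varepsilon}$, and $F\subseteq E(H)$ with $|F|\le \frac{\varepsilon}{4}du$. Suppose that $N:=N_{H-F}(U)$ satisfies $|N|<\frac{\varepsilon}{4}u$, and set $W:=U\cup N$ and $w:=|W|<(1+\frac{\varepsilon}{4})u$.

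\emph{Step 1: an edge decomposition.} Every edge of $H$ either avoids $U$, and so lies in $H[V\setminus U]$, or meets $U$. An edge meeting $U$ that is not in $F$ has both endpoints in $U\cup N=W$. Therefore
\[
e(H)\le e(H[V\setminus U])+e(H[W])+|F|.
\]

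\emph{Step 2: apply $\log$-maximality.} For any $S\subseteq V(H)$, $\log$-maximality gives
\[
e(H[S])=\tfrac12 d(H[S])\,|S|\le \tfrac{d}{2}\,|S|\,\frac{\log|S|}{\log n}.
\]
Apply this with $S=V\setminus U$, using $\log(n-u)\le\log n$, and with $S=W$. Combined with $e(H)=dn/2$ and $|F|\le\frac{\varepsilon}{4}du$, this gives
\[
\frac{dn}{2}\le \frac{d}{2}(n-u)+\frac{d}{2}\,w\,\frac{\log w}{\log n}+\frac{\varepsilon}{4}du .
\]
After cancelling, this reduces to
\[
u\le w\,\frac{\log w}{\log n}+\frac{\varepsilon}{2}u. \qquad (\ast)
\]

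\emph{Step 3: the numerical contradiction.} Since $w\le (1+\frac{\varepsilon}{4})u\le (1+\frac{\varepsilon}{4})n^{1-\varepsilon}$, we have
\[
\frac{\log w}{\log n}\le 1-\varepsilon+\delta,\qquad \delta:=\frac{\log(1+\varepsilon/4)}{\log n}\le \frac{\varepsilon}{4\log n}.
\]
Hence
\[
w\,\frac{\log w}{\log n}\le u\Bigl(1+\frac{\varepsilon}{4}\Bigr)(1-\varepsilon+\delta)\le u\Bigl(1-\frac{3\varepsilon}{4}+2\delta\Bigr).
\]
Substituting into $(\ast)$ gives $u\le u\bigl(1-\frac{\varepsilon}{4}+2\delta\bigr)$. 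This is false as soon as $2\delta<\frac{\varepsilon}{4}$, which holds once $\log n>2$, and $n$ is assumed large. This contradiction proves $|N_{H-F}(U)|\ge\frac{\varepsilon}{4}|U|$.

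The main point needing care is this last calculation: the $\frac{\varepsilon}{4}$ slack coming from $|F|$ and $|N|$ has to be absorbed by the $\varepsilon$ gain in $\log w/\log n$. One also has to handle the small additive error $\log(1+\varepsilon/4)/\log n$, which scales with $\varepsilon$, so the argument works uniformly in $\varepsilon$. The degenerate cases are harmless. If $U=V$, or $V\setminus U$ is a single vertex, the bound $e(H[V\setminus U])\le\frac d2(n-u)$ still holds trivially. The same inequality is used for $S=W$ even when $|W|=1$, where $\log 1=0$.
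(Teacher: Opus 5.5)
Your proposal is correct and follows the paper's proof essentially step for step: the same decomposition $e(H)\le e(H[V\setminus U])+e(H[U\cup N_{H-F}(U)])+|F|$, $\log$-maximality applied to both induced subgraphs, and the same numerical contradiction obtained from $\log w\le(1-\varepsilon)\log n+\log(1+\varepsilon/4)$. The one point you should state explicitly is that cancelling $d$ requires $d(H)>0$, which the paper also assumes; this assumption is genuinely needed, because an edgeless graph is vacuously $\log$-maximal but already fails the expansion condition for a single vertex with $\varepsilon=1$.
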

\begin{proof}
Let \( n = |V(H)| \geq 2 \) and \( d = d(H) > 0 \). Fix \( 0 \leq \varepsilon \leq 1 \), and let \( U \subseteq V(H) \) be a non-empty subset with \( |U| \leq n^{1-\varepsilon} \). Suppose, for contradiction, that for some edge set \( F \subseteq E(H) \) with \( |F| \leq \frac{\varepsilon}{4} \cdot d|U| \), we have \( |N_{H-F}(U)| < \frac{\varepsilon}{4} \cdot |U| \).

Define \( H_1 = H[U \cup N_{H-F}(U)] \) and \( H_2 = H[V(H) \setminus U] \), and let \( n_1 = |V(H_1)| < (1 + \varepsilon/4)|U| \) and \( n_2 = n - |U| \). Then,
\[
e(H) \leq e(H_1) + e(H_2) + |F|.
\]
Since \( d(H_i) \leq d \cdot \frac{\log n_i}{\log n} \) for \( i = 1, 2 \), we obtain
\[
\frac{dn}{2} \leq e(H) \leq \frac{d}{2\log n}\left( n_1 \log n_1 + n_2 \log n_2 \right) + \frac{\varepsilon}{4} \cdot d|U|.
\]
Rearranging, we have
\begin{equation} \label{eq1}
n \leq \frac{1}{\log n}\left( n_1 \log n_1 + n_2 \log n_2 \right) + \frac{\varepsilon}{2} \cdot |U|.
\end{equation}
Note that \( \log n_1 \leq (1 - \varepsilon)\log n + \log(1 + \varepsilon/4) \), and since \( \log(1 + \varepsilon/4) < \varepsilon/4 \), we have
\[
n_1 \cdot \frac{\log n_1}{\log n} \leq \left(1 + \frac{\varepsilon}{4}\right)|U| \cdot \left(1 - \varepsilon + \frac{\varepsilon}{4\log n}\right) < \left(1 + \frac{\varepsilon}{4}\right)|U| \cdot \left(1 - \frac{3\varepsilon}{4}\right),
\]
where the last inequality uses \( \varepsilon/4 \leq (\varepsilon/4)\log n \) for large \( n \). Also, \( n_2 \log n_2 / \log n \leq n - |U| \). Substituting these into \eqref{eq1} yields
\[
n < \left(1 + \frac{\varepsilon}{4}\right)\left(1 - \frac{3\varepsilon}{4}\right)|U| + (n - |U|) + \frac{\varepsilon}{2}|U| = n - \frac{\varepsilon^2}{16}|U| < n,
\]
a contradiction. \qedhere
\end{proof}
Note that the definition of a robust sublinear expander already guarantees the large minimum degree, i.e. $\delta(G)\geq d(G)/4$, by simply applying the expansion property to the sets of size one. Thus, one can view Lemma \ref{min degree} as a stronger version of the easy classical result that any graph of average degree $d$ contains a subgraph with minimum degree at least $d/2$. 
\begin{lemma} \label{min degree}
Let $G$ be a $\log $-maximal graph. Then $\delta(G)\geq \frac{d(G)}{2}$.
\end{lemma}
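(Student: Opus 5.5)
We argue by contradiction, comparing $G$ with the subgraph obtained by deleting a vertex of minimum degree; this is the standard argument that a graph maximizing $d(H)$ has minimum degree at least $d/2$, with the $\log$-normalization handled separately. Let $n=|V(G)|$, $d=d(G)$, and suppose some vertex $v$ has $\deg(v)<d/2$. Put $H=G-v$, so that $|V(H)|=n-1$ and
\[
e(H)=e(G)-\deg(v)>\frac{dn}{2}-\frac{d}{2}=\frac{d(n-1)}{2}.
\]
Hence $d(H)=2e(H)/(n-1)>d$, and in fact $d(H)-d = \frac{d-2\deg(v)}{n-1}>0$. The small cases $n\le 2$ are trivial, since $d(G)\le 1$ there; for $n\ge 3$ we have $\log(n-1)>0$, so $H$ is a legitimate competitor in Definition \ref{log-maximal}.

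Since $\log(n-1)<\log n$ and $d(H)>d\ge 0$, we get
\[
\frac{d(H)}{\log |V(H)|}=\frac{d(H)}{\log(n-1)}>\frac{d}{\log(n-1)}\ge\frac{d}{\log n}=\frac{d(G)}{\log |V(G)|}.
\]
This contradicts the $\log$-maximality of $G$, and therefore $\delta(G)\ge d(G)/2$.

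The only delicate point is the boundary: we must make sure $H$ is nonempty and has at least two vertices, so that $\log|V(H)|>0$ and the ratio in Definition \ref{log-maximal} is defined and positive. That is why the case $n\le 2$ is dispatched directly. The $\log$ factor only helps, because removing a vertex shrinks the denominator, so the argument is strictly easier than the classical one.
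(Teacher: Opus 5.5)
Your proof is correct and is essentially the paper's argument: delete a vertex $v$ of minimum degree and apply $\log$-maximality to $H=G-v$. Using $\log(n-1)<\log n$, this yields $d(G)n-2\delta\le (n-1)d(G)$; the paper derives this inequality directly rather than by contradiction, and it does not treat the degenerate case $n\le 2$ separately, where $G$ is edgeless or a single edge and so $\delta=d$.
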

\begin{proof}
Let \(\delta\) be the minimum degree of \(G\), and let \(v\) be a vertex of degree \(\delta\). Let \(H :=G- v\). Then, by the definition of \(\log \)-maximal, we have \(d(H)/\log |V(H)| \leq d(G)/\log |V(G)|\). This implies
\[
\frac{d(G)n-2\delta}{(n-1)\log |V(H)|} \leq \frac{d(G)}{\log |V(G)|},
\]
which gives
$d(G)n-2\delta \leq (n-1)d(G)$ and thus \(\delta(G) \geq d(G)/2\).\qedhere
\end{proof}

Now we are ready to show our main theorem assuming Lemma \ref{pass to expander} is true.
\begin{proof}[Proof of Theorem \ref{main result} assuming Lemma \ref{pass to expander}]
Let \( G \) be a properly edge-colored graph on \( n \) vertices with average degree \( d(G) \geq 10^8 \cdot t^2 \cdot \log n \cdot (\log \log n)^{6} \). Let $H$ be a $\log$-maximal subgraph of $G$ and hence a robust sublinear expander by Lemma \ref{connection}. Thus,
\[
d(H) \geq   \frac{\log |V(H)|}{\log n} \cdot d(G) \geq \frac{\log |V(H)|}{\log n} \cdot 10^8 \cdot t^2 \cdot \log n \cdot (\log \log n)^{6} > 10^8 \cdot t^2\cdot \log |V(H)| \cdot (\log \log |V(H)|)^{6}.
\]
So, by Lemma \ref{pass to expander} (noting that \( |V(H)| \geq d(H) \geq \log \log n \)), the subgraph \( H \) contains a rainbow $TK_t$, and then \( G \) also contains a rainbow $TK_t$.
\end{proof}

\section{Colorful expansion of random samples} \label{s4}
In this section, we prove Lemma \ref{pass to expander}. Let \( \gamma : E(G) \to C \) be as in Lemma \ref{pass to expander}.
Define
$K = \lceil 10 \log \log n \rceil$, $L = 10^5 \cdot \lceil \log n \rceil$ and $T = KL$
be three integers. Note that \( T \leq  10^7 \cdot \log n \cdot \log \log n \leq d(G)/(12\log \log n) \). Let $\phi_0$, $\phi_1$, $v_i$ and $v_j$ be as defined in Section \ref{s2}. Let \( A_0 \subseteq C \) be a random subset of the color palette, obtained by including each color in \( C \) with probability \( 1/2 \). Recall that $RP_{\phi_1}(x,A)$ is the set of vertices in $G$ which can be reached by a rainbow path with colors in $A\setminus \phi_1$ from $x$. To prove Lemma \ref{pass to expander}, it suffices to show the following lemma. 
\begin{lemma}\label{intersection}
We have 
\[
\ \mathbb{P}\left[ |RP_{\phi_1}(v_i,A_0)\setminus \phi_0| \geq n/\sqrt{e} \right] \geq 11/21.     
\]
\end{lemma}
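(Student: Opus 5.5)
We plan to follow the two-step strategy of Section~\ref{s2}: a sprinkling phase that builds a moderately large reachable set from a heavily thinned palette, then a thinning phase run in reverse that grows the set back up to size about $n/\sqrt{e}$. For the thinning, define nested random palettes $A_0\supseteq A_1\supseteq\cdots\supseteq A_T$, where $A_{l+1}$ keeps each color of $A_l$ independently with probability $p=1-1/T$ (or a comparable value). Then each $A_l$ is itself a $p$-random set with density $\frac12 p^l$, and in particular $A_T$ has density at least about $\frac{1}{2e}$. Write $RP_l=RP_{\phi_1}(v_i,A_l)\setminus\phi_0$. Since $A_{l}\subseteq A_{l-1}$, we have $|RP_{l-1}|\ge |RP_l|$ deterministically, and it suffices to show that $|RP_0|\ge n/\sqrt e$ with probability at least $11/21$.

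\textbf{Step 1 (sprinkling start).} Starting from $R=A_T$, we split it into $K$ independent rounds of density about $1/(2eK)$ each. In each round we grow the reachable set by one expansion layer using Definition~\ref{rse}. We use $\delta(G)\ge d(G)/2$ from Lemma~\ref{min degree} for the first few steps, where $|\phi_0|,|\phi_1|$ are not yet negligible, since $|\phi_0|\ll d(G)$. The goal is that with probability at least $1-o(1)$ we get $|RP_T|\ge t^2\log^3 n\gg |\phi_0|+|\phi_1|$. The small-set regime, where expansion must beat the forbidden vertices, is handled by direct degree counting: a vertex $u$ of the current set has at least $d(G)/2-|\phi_0|-|\phi_1|-T$ usable edges, and a constant fraction of these colors survive.

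\textbf{Step 2 (thinning expansion).} This is the main obstacle. For $U=RP_l$ with $|U|\le n/\sqrt e$, we take $\varepsilon$ with $|U|=n^{1-\varepsilon}$. We apply Definition~\ref{rse} with $F$ equal to the edges touching $\phi_0$ or having colors in $\phi_1$; this is admissible because $|U|\gg|\phi_0|$. This gives $|N(U)|\ge\varepsilon|U|/4$, together with at least that many boundary edges of distinct colors (by properness after choosing one edge per neighbor). Following Lemma~4.1 of \cite{Alon2025}, we split into the two cases of Section~\ref{s2}. If many boundary colors lie outside $A_l$, a $1-p$ fraction lands in $A_{l-1}\setminus A_l$, enlarging $RP_{l-1}$. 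If many lie in $A_l$, a $1-p$ fraction lands in $A_l\setminus A_{l+1}$ and is forced onto every path, which shrinks $RP_{l+1}$. Hence
\[
\mathbb{E}\bigl[\log|RP_{l-1}|-\log|RP_{l+1}|\bigr]\gtrsim \frac{\varepsilon}{T}.
\]
Here the conditional expectation must be handled carefully, since $A_{l+1}\subseteq A_l$ are coupled. We plan to argue via the martingale/telescoping structure, conditioning on $A_l$ and exposing $A_{l-1}$ and $A_{l+1}$ independently given $A_l$.

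\textbf{Step 3 (summation).} We telescope over $l$ and use $\sum_l \varepsilon_l$, where $\varepsilon_l=1-\log|RP_l|/\log n$. If $|RP_0|<n/\sqrt e$ throughout, then the total expected growth $\sum_l \varepsilon_l/T$ over $T=KL$ steps exceeds the budget $\log n$. We convert this into a bound on $\mathbb{P}[|RP_0|<n/\sqrt e]$ via Markov's inequality on the number of "bad" steps, choosing the constants in $L=10^5\lceil\log n\rceil$ so that the failure probability is at most $10/21-o(1)$. We combine this with the $o(1)$ failure probability from Step~1.

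The delicate points are the near-$n$ regime, where $\varepsilon$ is tiny, and the small regime, where $\phi_0$ dominates. The threshold $n/\sqrt e$ and the constant $11/21$ (rather than $1-o(1)$) suggest that this estimate is in expectation and is only used up to a constant fraction.
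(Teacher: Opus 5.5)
There is a genuine gap in Steps 2--3. The rate you derive is too small by a factor of $T$, and your own bookkeeping shows it. From $\mathbb{E}[\log|RP_{l-1}|-\log|RP_{l+1}|]\gtrsim\varepsilon/T$ with $\varepsilon_l\le 1$, the total log-growth guaranteed over all $T=KL$ steps is of order $\sum_l\varepsilon_l/T\le 1$. That is only a constant factor, whereas you must get from $t^2\log^3 n$ to $n/\sqrt e$. So the claim that this sum ``exceeds the budget $\log n$'' is false.

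The loss comes from using Definition~\ref{rse} only to delete forbidden edges and then keeping one boundary edge per neighbor. Then $\Theta(\varepsilon|U|)$ edges, each of whose colors flips with probability $\Theta(1/T)$, move only $\Theta(\varepsilon|U|/T)$ vertices. (Properness also does not make these edges distinctly colored; it only does so for edges at a common vertex.) Your red/blue dichotomy is the right one, but the paper uses the edge-robustness quantitatively. Lemma~\ref{core} gives either $(\varepsilon/10)d(G)|U|$ red edges with degree at most $\lceil d(G)\rceil$ at each $v\in U$, or as many blue edges with degree at most $\lceil d(G)\rceil$ at each outside vertex. Edges at one vertex carry distinct colors, so their events are independent. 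The degree caps are matched to the flip probabilities; this is exactly where $d(G)\ge 12T$, i.e.\ $d(G)\gtrsim\log n\log\log n$, enters, and your Step~2 never uses it. As a result, each affected vertex changes status with probability proportional to its degree in the chosen edge set, and summing gives $\Theta(\varepsilon|U|)$ per double step.

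Second, your chaining is not justified even with the correct rate. In the growth case, a lower bound on $\mathbb{E}|RP_{l-1}|$ gives none on $\mathbb{E}\log|RP_{l-1}|$, since Jensen goes the wrong way. The one-step estimate also needs $\varepsilon$ bounded below and $|U|\gg|\phi_0|/\varepsilon$. Restricting to such events couples $A_{l\pm1}$ to other palettes, so you cannot simply expose $A_{l-1},A_{l+1}$ independently given $A_l$.

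The paper instead works with expected sizes at fixed scales $f(k)=n\exp(-10^k/2)$. It conditions on $\mathcal{E}_{k-1}=\{|RP_{(k-1)L}|<f(k-1)\}$, which by monotonicity gives $|RP_l|<n^{1-\varepsilon}$ with the fixed value $\varepsilon=10^{k-1}/(2\log n)$ across the whole block. Under this conditioning, colors outside $A_{(k-1)L}$ can never re-enter. The type-I/type-II sets and Lemma~\ref{bip prob} therefore pass, outside an event of probability at most $15e^{-\varepsilon L/176}$, to blue edges with colors in $A_{(k-1)L}$. For these, Lemma~\ref{cond prob}(ii) gives re-entry probability at least $1/(2L)$ rather than $1/T$. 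This yields Lemma~\ref{expansion rate}.

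Iterating the factor $1+\varepsilon/704$ over $L/4$ double steps forces $\mathbb{P}[F_k]\le 4^{-k}$, since otherwise $\mathbb{E}[|RP_{kL-L/2}|\mid\mathcal{E}_{k-1}]\ge 3n/2$. The statement then follows from $6/7-\sum_{k\ge1}4^{-k}>11/21$, which is a union bound over scales, not a Markov bound on bad steps.

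Your Step~1 has the analogous problem. With rounds of color density $\sim 1/K$, vertex expansion alone grows the set by only a factor $1+O(1/K)$ per round, and degree counting does not by itself produce distinct new vertices. The paper treats this regime separately via Lemmas~\ref{l6.4} and~\ref{Tomom}.
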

\noindent Indeed, applying Lemma \ref{intersection} to the complement \( C \setminus A_0 \) and $v_j$ (which has the same distribution as \( A_0 \) and $v_i$), we also have
\[
\mathbb{P} \left[ |RP_{\phi_1}(v_j,C \setminus A_0)\setminus \phi_0| \geq n/\sqrt{e} \right] \geq 11/21.
\]
Thus, with positive probability, we have \( |RP_{\phi_1}(v_i,A_0)\setminus \phi_0| + |RP_{\phi_1}(v_j,C \setminus A_0)\setminus \phi_0| \geq (2/\sqrt{e}) \cdot n > n + 1 \). By pigeonhole principle, there exists a vertex \( u \in V(G) \) with \( u \in RP_{\phi_1}(v_i,A_0) \cap RP_{\phi_1}(v_j,C_{ij} \setminus A_0)\setminus \phi_0 \). 
This implies that \( u \) can be reached from \( v_i \) by a rainbow path with colors in \( A_0  \) and also from $v_j$ by a rainbow path with colors in \( C \setminus A_0 \). Thus, there must be a rainbow path between $v_i$ and $v_j$ obtained by concatenating these rainbow paths, as desired.  \hfill $\blacksquare$

Now we construct \( A_0 \) to obtain a nested sequence of random color sets \( A_0 \supseteq A_1 \supseteq \cdots \supseteq A_{(K-1)L} \), where for \( i = 0, \dots, (K-1)L  \). Define 
\[
\mathbb{P} \left[ x\in A_{i+1} | x\in A_i \right]=1=\frac{1}{T}.
\]
For convenience, write $RP_l$ for $RP_{\phi_1}(v_i,A_l)\setminus \phi_0$, for $l\in\{0,1,\ldots,(K-1)L\}$. Define \( f : \{0, \dots, K-1\} \to \mathbb{R} \) such that
\[
f(k) = n \cdot \exp \left( -\frac{1}{2} \cdot 10^k \right).
\]
Note that we have \( f(0) = n/\sqrt{e} >(n+1)/2\). Moreover, we can show that $RP_{(k-1)L}$ is large with high probability.
\begin{lemma} \label{small set}
We have
\[
\mathbb{P}\left[ |RP_{(K-1)L}| \geq t^2\log ^3 n \right] \geq 6/7.
\]
\end{lemma}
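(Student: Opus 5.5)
The plan is to prove Lemma~\ref{small set} by \emph{sprinkling}: we grow the reachable set in a small number of independent rounds of fresh colours. First note that every colour survives into $A_{(K-1)L}$ independently, with probability
\[
p=\tfrac12\bigl(1-\tfrac1T\bigr)^{(K-1)L}\ \ge\ \tfrac{1}{2e}.
\]
So we may couple $A_{(K-1)L}\supseteq R_1\cup\cdots\cup R_m$, where the $R_s$ are independent random colour sets, each containing every colour with probability $q$, and $mq\le p$. I will take $m=\Theta(\log\log n)$ and $q=\Theta(1/\log\log n)$.

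We build sets $\{v_i\}=U_0\subseteq U_1\subseteq\cdots\subseteq U_m\subseteq V(G)\setminus\phi_0$. Each $w\in U_s$ comes with a rainbow path $P_w$ from $v_i$ to $w$ that avoids $\phi_0$ and uses colours in $(R_1\cup\cdots\cup R_s)\setminus\phi_1$. Each round is a single extension step: in round $s+1$ we add every $y\notin U_s\cup\phi_0$ joined to some $w\in U_s$ by an edge whose colour lies in $R_{s+1}\setminus(\phi_1\cup R_1\cup\cdots\cup R_s)$. Such a colour is fresh, so it cannot occur on $P_w$, and $P_w+wy$ is again rainbow. This is the whole reason for using independent rounds: the rainbow condition becomes automatic. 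Conditioned on the earlier rounds, each colour outside $\phi_1$ is fresh and in $R_{s+1}$ with probability at least $q(1-p)\ge q/2$.

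The key deterministic input is a strong expansion estimate for small sets, taken from $\log$-maximality rather than from Definition~\ref{rse}, which only gives a constant factor and would need too many rounds. Let $U$ have $|U|=u\le t^2\log^3 n$. By Lemma~\ref{min degree}, at least $du/4$ edges meet $U$. Among them, at most $|\phi_1|u$ have colours in $\phi_1$, by properness, and at most $d|\phi_0|$ go to $\phi_0$. Both quantities are $o(du)$ once $u\ge|\phi_0|(\log\log n)^{-1}$; for smaller $u$, in particular the first round from $v_i$, I use the direct bound that each vertex has at least $d/2-|\phi_0|-|\phi_1|-u\ge d/3$ good edges. Let $N$ be the set of good outside neighbours. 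Applying Definition~\ref{log-maximal} to $H=G[U\cup N]$ gives
\[
\frac{du}{8}\ \le\ e(H)\ \le\ \frac{d\,(u+|N|)\log(u+|N|)}{2\log n},
\]
hence
\[
|N|\ \ge\ \min\Bigl\{\frac{u\log n}{8\log(t^2\log^4 n)},\ t^2\log^3 n\Bigr\}.
\]
Whenever $t^2\le n^{o(1)}$ this is a growth factor of order $\log n/\log\log n$ (for larger $t$ the hypothesis $d\ge t^2\log n$ already forces $d$ itself to exceed the target after the first round). Consequently, in each round $\mathbb{E}|U_{s+1}\setminus U_s|\gtrsim q|N|$, which multiplies $u$ by at least $\log n/(\log\log n)^{2}\ge 2$. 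The first round alone already gives roughly $qd/3\ge t^2\log n$ vertices. Therefore $O(\log\log n)$ rounds suffice to reach $t^2\log^3 n$, with total sampling probability $mq\le p$.

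The main obstacle is concentration in each round, because the events ``$y$ is added'' are correlated through shared colours. To handle this, choose for each $y\in N$ one good edge $e_y$ into $U$. By properness each colour lies on at most $u$ of the chosen edges, and the number of vertices added is a function of the independent colour indicators. I would apply a Chernoff-type bound for read-$u$ families or McDiarmid's inequality (each colour has effect at most $u$, and there are at least $|N|$ edge-colour incidences), or alternatively first greedily extract a large subfamily of the $e_y$ with distinct colours, of size at least $|N|/u$ times the number of colours, and then use plain Chernoff. If the resulting failure probability in each round is $O(1/\log\log n)$ with a small constant, a union bound over the $m$ rounds gives total failure probability at most $1/7$. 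This yields $|RP_{(K-1)L}|\ge|U_m|\ge t^2\log^3 n$ with probability at least $6/7$. If the variance is too large in the smallest rounds, I would instead condition on the first round, where $u=1$ and all colours at $v_i$ are distinct so the count is a genuine binomial, and afterwards the expectations are large enough for McDiarmid's inequality.
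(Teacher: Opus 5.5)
Your architecture matches the paper's: realise $A_{(K-1)L}\supseteq R_1\cup\dots\cup R_m$ by independent sprinkling rounds, expand small sets via $\log$-maximality, concentrate in each round, and take a union bound. You also rightly flag that rainbowness across rounds needs an argument, which the paper's write-up omits.

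However, your fresh-colour device fails as stated. Conditioned on $R_1,\dots,R_s$, freshness is already determined. So a colour in $R_1\cup\dots\cup R_s$ has conditional probability $0$, not at least $q(1-p)$, of being usable in round $s+1$. To get $\mathbb{E}|U_{s+1}\setminus U_s|\gtrsim q|N|$ you would need most edges leaving $U_s$ to avoid $R_1\cup\dots\cup R_s$. Since $U_s$ is a function of exactly these sets, their density being at most $p$ guarantees nothing.

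The repair is to drop freshness: for $w\in U_s$, forbid only the at most $s$ colours on $P_w$. By properness this removes at most $su$ edges at $U_s$, which is negligible against $du$. Since $R_{s+1}$ is independent of the history, each remaining edge is sampled with conditional probability exactly $q$.

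There are also two smaller slips.
\begin{itemize}
\item Edges from $U$ to $\phi_0$ number at most $u|\phi_0|$, not $d|\phi_0|$, because degrees are not bounded by $d$.
\item Your cap $t^2\log^3 n$ on $|N|$ must be raised to about $t^2\log^4 n$. Otherwise the last round, after losing the factor $q$, cannot reach the target.
\end{itemize}
With these changes and McDiarmid's inequality, your argument goes through when, say, $\log t=o(\log n/(\log\log n)^2)$, in particular for constant $t$.

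The real gap is larger $t$. Your remark that then ``$d$ itself exceeds the target after the first round'' is false. The hypothesis only gives $d\ge 10^8t^2\log n(\log\log n)^6$, which is smaller than $t^2\log^3 n$ by a factor $\log^2 n/(10^8(\log\log n)^6)$.

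The underlying problem is that you use one edge per neighbour, so every round pays a factor $q$. If $t^2=n^c$ for a small constant $c$, $\log$-maximality only gives $|N|\gtrsim u/(8c)$. A round then multiplies $u$ by about $1+q/(8c)$. Under the budget $mq\le p$, the guaranteed total growth is about $e^{p/(8c)}=O(1)$, rather than the required $\log^2 n/(\log\log n)^6$.

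The missing idea is the multiplicity dichotomy of Lemma~\ref{l6.4}.
\begin{itemize}
\item A vertex of $N(B)$ with at least $\Delta=\lambda^{1/2}/p_c$ admissible edges into $B$ is hit with probability at least $1-e^{-\lambda^{1/2}}$, regardless of $p_c$. If such vertices carry most of the edges, $\log$-maximality applied to $G[B\cup S]$ makes them numerous. The Case~1 computation gives $|S|\ge|B|$ whenever $|B|<n^{1/4}/2$, although the paper records only a much weaker bound.
\item Otherwise at least $d|B|/4$ edges go to low-multiplicity vertices. The expected growth is then $\gtrsim dp_c|B|/\lambda^{1/2}$, and Lemma~\ref{Tomom} gives concentration.
\end{itemize}
Either way one gets at least constant-factor growth per round with failure probability $O(e^{-\lambda^{1/2}})$, and the first case does not pay the factor $p_c$ your count incurs. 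So $O(\log\log n)$ rounds suffice even when $t$ is a small power of $n$.

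Some restriction on $t$ is unavoidable: the hypotheses allow $t^2\log^3 n>n$, where the statement is false. But your restriction is far stronger than necessary.
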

We postpone the proof of Lemma \ref{small set} to Section \ref{s5}. First, suppose Lemma \ref{small set} holds. Then we have $\mathbb{P}\left[ |RP_{(K-1)L}| \geq f(K - 1) \right] \geq 6/7$ as $t^2\log ^3 n>\log ^3 n>f(K-1)$.
The following lemma states that, for given \( k = 1, \dots, K-1 \), if we have \( |RP_{kL}| \geq f(k) \), then we likely also have \( |RP_{(k-1)L}| \geq f(k - 1) \). 

\begin{lemma} \label{iterate}
Assume $|RP_{(K-1)L}|\geq f(K-1)$. Then, for every \( k \in \{1, \dots, K-1\} \), we have
\[
\mathbb{P}[|RP_{(k-1)L}| < f(k - 1) \text{ and } |RP_{kL}| \geq f(k)] \leq \frac{1}{4^k}.
\]
\end{lemma}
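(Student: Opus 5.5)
We follow the thinning strategy of Section \ref{s2}, adapting the argument of \cite{Alon2025} (their Lemma 4.1) to the forbidden sets $\phi_0,\phi_1$. Fix $k\in\{1,\dots,K-1\}$ and look at the $L$ consecutive thinning steps between $A_{(k-1)L}$ and $A_{kL}$. Write $r_l=|RP_l|$ for $(k-1)L\le l\le kL$; this sequence is non-increasing in $l$. On the event $\{r_{kL}\ge f(k)\}\cap\{r_{(k-1)L}<f(k-1)\}$, every $r_l$ with $l$ in the window lies in $[f(k),f(k-1))$. Since $\log(f(k-1)/f(k))=\tfrac12\cdot 9\cdot 10^{k-1}$, the total logarithmic growth over the window is at most $O(10^k)$.

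\textbf{Expected expansion in one step.} The core is a one-step estimate in the spirit of Lemma \ref{expansion rate}, stated conditionally on $A_{l+1}$ with the randomness in $A_l\setminus A_{l+1}$ and $A_{l+1}\setminus A_{l+2}$. Set $U=RP_{l+1}$ and $|U|=n^{1-\varepsilon}$, so $\varepsilon\gtrsim 10^{k}/\log n$ when $|U|\le f(k-1)$. Let $F$ consist of the edges at $U$ whose colours lie in $\phi_1$, together with the edges into $\phi_0$. Since $|\phi_0|,|\phi_1|=O(t^2\log n\log\log n)$, the colouring is proper, and $|U|\ge f(k)\ge t^2\log^3 n$ (which the hypothesis makes available), we get $|F|\le \frac{\varepsilon}{8}d(G)|U|$. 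Definition \ref{rse} then gives $|N_{G-F}(U)|\ge \frac{\varepsilon}{4}|U|$. Each neighbour $v$ is joined to $U$ by an edge of colour $c$, and we split into the two cases of Section \ref{s2}.
\begin{itemize}
\item If many such $c$ lie outside $A_{l+1}$, each survives into $A_l$ with probability $\approx 1/T$.
\item If many lie in $A_{l+1}$, each is removed in $A_{l+2}$ with probability $\approx 1/T$; its endpoint in $U$ then drops out of $RP_{l+2}$, because every rainbow path to it must use $c$.
\end{itemize}
Proper colouring ensures that distinct neighbours contribute distinct edges, and each colour is counted at most about twice. In either case,
\[
\mathbb{E}\Bigl[\log\tfrac{r_l}{r_{l+2}}\Bigr]\gtrsim \frac{\varepsilon}{T}\cdot\frac{d(G)}{\text{poly}\log\log n}
\]
in the form of \cite{Alon2025}. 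Because $d(G)\ge 10^8t^2\log n(\log\log n)^6$ and $T=KL$, this is $\gtrsim 10^k(\log\log n)^{c}/L$ for a suitable constant $c$, after taking logarithms with some care.

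\textbf{Summing and concentration.} Sum over the $L$ steps of the window, using pairs $(l,l+2)$ split into even and odd chains. Define $X_l=\log(r_l/r_{l+2})-\mu_l$, where $\mu_l$ is the conditional lower bound on the expected increment. This gives a supermartingale-type sequence with bounded increments, since a single step changes $\log r$ by at most $O(\log n)$. Under the bad event the sum of the $\log$-ratios is at most $O(10^k)$, while the sum of the $\mu_l$ is at least $C'10^k$ with $C'$ large. Markov's inequality, or Azuma's inequality if the increments can be truncated, applied to the deficit then bounds the probability of the bad event by $4^{-k}$. The hypothesis $|RP_{(K-1)L}|\ge f(K-1)$ is used only to ensure that the sets are large relative to $|\phi_0|+|\phi_1|$ throughout.

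\textbf{Main obstacle.} The hardest step is the one-step expectation bound in the presence of $\phi_0,\phi_1$. The difficulty is twofold. First, the expansion of Definition \ref{rse} must be robust to $F$, which requires $f(k)$ to dominate $|\phi_0|$ by a $\log\log$ factor. Second, the logarithmic increments must not be dominated by rare large jumps. I would first try to replicate Lemma 4.1 of \cite{Alon2025} verbatim with $G-F$ in place of $G$. If the increments are too heavy-tailed for Azuma, I would bound $\mathbb{E}[r_{l+2}/r_l]$ multiplicatively instead and apply Markov's inequality to the product.
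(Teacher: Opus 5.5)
\textbf{There is a genuine gap, and it is structural: the summing/concentration step cannot work as set up.}

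Your one-step drift is two-sided. It is an estimate for $\EE[\,\cdot\mid A_{l+1}]$ in which the gain comes either from colours outside $A_{l+1}$ entering $A_l$ (blue case) or from colours of $A_{l+1}$ leaving $A_{l+2}$ (red case). No filtration makes $\log(r_l/r_{l+2})-\mu_l$ a supermartingale. If you reveal $A_0,A_1,\dots$ in order, then once $A_l$ is in the past only deletions remain random, so only the red case gives drift. If you reveal in reverse order, only the blue case does.

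Moreover, the bad event involves $A_{(k-1)L}$ and $A_{kL}$. Conditioning on it distorts the law of $(A_l,A_{l+2})$ given $A_{l+1}$, so a drift computed given $A_{l+1}$ alone cannot be used on that event.

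Even granting a supermartingale, neither tool you name closes the argument. The deficit $\sum\mu_l-\sum\log(r_l/r_{l+2})$ is not nonnegative, so Markov's inequality extracts nothing from $\EE[\text{deficit}]\le 0$. Azuma with increments $O(\log n)$ over $\Theta(\log n)$ steps allows fluctuations of order $\log^{3/2}n$, which swamps the total drift for small $k$. Working with logarithms also loses the growth side by Jensen's inequality, because a single added colour class can attach many new vertices at once.

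\textbf{The missing idea is to chain the two-sided inequalities by linearity of expectation, all conditioned on one event fixed at the top of the window.} The paper does this in three steps. First, it conditions on $\cE_{k-1}=\{|RP_{(k-1)L}|<f(k-1)\}$, which gives $|U|<n^{1-\varepsilon}$ with the uniform value $\varepsilon=10^{k-1}/(2\log n)$. Second, it disposes of the cases $\PP[\cE_{k-1}]<4^{-k}$ and $\EE[|RP_{kL}|\mid\cE_{k-1}]<f(k)/4^k$ directly, the second by Markov (Claim \ref{assumption}). Third, it proves the linear recursion of Lemma \ref{expansion rate}; after $L/4$ iterations this forces $\EE[|RP_{kL-L/2}|\mid\cE_{k-1}]\ge\frac{3}{2}n$, a contradiction. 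The price of conditioning on $A_{(k-1)L}$ is that only colours of $A_{(k-1)L}$ can re-enter. That is why the type-I/type-II sets, Lemma \ref{bip prob} and the error term in Lemma \ref{expansion rate} are needed, and your plan has no counterpart to them.

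\textbf{Your one-step mechanism is also insufficient as stated.} Having $\frac{\varepsilon}{4}|U|$ neighbours with one edge each, each colour switching with probability $\Theta(1/T)$, only guarantees an expected change of order $\varepsilon|U|/T$. Nothing in your sketch produces the factor $d(G)$. That factor comes from Lemma \ref{core}, which uses the edge-robustness in Definition \ref{rse} to find one of two things:
either $\frac{\varepsilon}{10}d(G)|U|$ red edges with degree at most $\lceil d(G)\rceil$ on the $U$ side,
or as many blue edges with degree at most $\lceil d(G)\rceil$ outside.
Then the per-vertex estimate of Lemma \ref{union} gives an expected change of order $\varepsilon|U|$ (in the blue case, after passing to a type-II set).

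\textbf{Your budget $|F|\le\frac{\varepsilon}{8}d(G)|U|$ is not justified.} Per-vertex counting gives only $(|\phi_0|+|\phi_1|)|U|$, which is of order $t^2\log n\log\log n\,|U|$. For $k=1$ and $d(G)$ near its lower bound, $\varepsilon d(G)|U|$ is only of order $t^2(\log\log n)^6|U|$. For $\phi_0$, the paper avoids an edge budget entirely by subtracting $|\phi_0|$ from the neighbourhood vertex count inside Lemma \ref{core}, which needs only $|\phi_0|\le\varepsilon|U|/40$.
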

\noindent We postpone the proof of this lemma to Section \ref{s6}. Iterating this will then give us a lower bound for \( |RP_0| \) with high probability. Now we prove Lemma \ref{intersection}.
\begin{proof}[Proof of Lemma \ref{intersection} assume Lemma \ref{iterate}]
Let \( E_{K-1} := \{ |RP_{(K-1)L}| \geq f(K-1) \} \) and \( E_0 := \{ |RP_0| \geq f(0) \} \). Define the \textit{bad} event \( F_k \) for \( k = \{1, \dots, K-1 \}\) as
$F_k = \{ |RP_{kL}| \geq f(k) \} \cap \{ |RP_{(k-1)L}| < f(k-1) \}$.
By Lemma~\ref{iterate}, we have \( \mathbb{P}(F_k) \leq 1/4^k \). Note that \(E_{K-1} \cap \overline{E_0} \subseteq \bigcup_{k=1}^{K-1} F_k\). By a union bound, we have
\[
\mathbb{P}(E_{K-1} \cap \overline{E_0}) \leq \sum_{k=1}^{K-1} \mathbb{P}(F_k) \leq \sum_{k=1}^{K-1} \frac{1}{4^k} < \frac{1}{3}.
\]
Since \( \mathbb{P}(E_{K-1} \cap E_0) = \mathbb{P}(E_{K-1}) - \mathbb{P}(E_{K-1} \cap \overline{E_0}) \), it follows that
$\mathbb{P}(E_{K-1} \cap E_0) > 6/7 - 1/3 = 11/21$.
Hence, we have
\[
\mathbb{P} \left( |RP_0| \geq \frac{n}{\sqrt{e}} \right)=\mathbb{P}(E_0) \geq \mathbb{P}(E_{K-1} \cap E_0) > \frac{11}{21},
\]
as desired.
\end{proof}
We also need the following probabilistic lemmas. 
\begin{lemma}[Markov's inequality, see \cite{Alon2004}] \label{Markov}
Let $Y$ be a nonnegative random variable and $a > 0$. Then we have
\[
P(Y > a\mathbb{E}[Y]) < \frac{1}{a}.
\]
\end{lemma}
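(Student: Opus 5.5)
This is the standard Markov inequality, and I would prove it directly from the definition of expectation, taking a little care to get the \emph{strict} inequality. Write $\mu=\mathbb{E}[Y]$ and let $\mathcal{E}$ be the event $\{Y>a\mu\}$. The plan is to bound $\mu$ from below by the contribution of $\mathcal{E}$ alone, using $Y\ge 0$, and to exploit the fact that $Y$ is \emph{strictly} larger than $a\mu$ on $\mathcal{E}$.

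First I dispose of the degenerate cases. If $\mathbb{P}(\mathcal{E})=0$, then $\mathbb{P}(\mathcal{E})=0<1/a$ and there is nothing to prove. This covers in particular the case $\mu=0$: a nonnegative variable with zero mean vanishes almost surely, so $\mathbb{P}(Y>0)=0$. If $\mu=\infty$, then $\mathcal{E}$ is empty and the claim is again trivial. So I may assume $0<\mu<\infty$ and $\mathbb{P}(\mathcal{E})>0$.

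In the main case, nonnegativity of $Y$ gives
\[
\mu=\mathbb{E}[Y]\ \ge\ \mathbb{E}[Y\mathbf{1}_{\mathcal{E}}].
\]
On $\mathcal{E}$ we have $Y-a\mu>0$, and $\mathcal{E}$ has positive probability. The expectation of a variable that is strictly positive on a set of positive measure is strictly positive, so $\mathbb{E}[(Y-a\mu)\mathbf{1}_{\mathcal{E}}]>0$. This means
\[
\mathbb{E}[Y\mathbf{1}_{\mathcal{E}}]\ >\ a\mu\,\mathbb{P}(\mathcal{E}).
\]
Combining the two displays gives $\mu> a\mu\,\mathbb{P}(\mathcal{E})$, and dividing by $a\mu>0$ yields $\mathbb{P}(\mathcal{E})<1/a$.

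There is no real obstacle here. The only step needing attention is the strictness of the inequality, which the case split handles: the trivial bound $0<1/a$ when $\mathcal{E}$ is null, and the strict positivity of $\mathbb{E}[(Y-a\mu)\mathbf{1}_{\mathcal{E}}]$ otherwise.
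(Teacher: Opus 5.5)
Your proof is correct. The paper gives no proof of its own and only cites Alon--Spencer, and your argument is the standard one behind that reference, namely $\mathbb{E}[Y]\ge \mathbb{E}[Y\mathbf{1}_{\mathcal{E}}]\ge a\mu\,\mathbb{P}(\mathcal{E})$. Your two extra steps, the case split for $\mathbb{P}(\mathcal{E})=0$ and the strict positivity of $\mathbb{E}[(Y-a\mu)\mathbf{1}_{\mathcal{E}}]$, correctly upgrade the usual non-strict bound to the strict inequality as stated.
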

\begin{lemma}[Chernoff bound, see \cite{Alon2004}] \label{Chernoff}
Let $X_1, \dots, X_m$ be independent $\{0, 1\}$-valued random variables. Let $X = X_1 + \cdots + X_m$, and $\mu = \mathbb{E}[X]$. Then, for every $\delta \geq  0$, we have
\[
\mathbb{P}[X < (1 - \delta)\mu] < e^{-\delta^2 \mu / 2}
\quad \text{and} \quad 
\mathbb{P}[X > (1 + \delta)\mu] < e^{-\delta^2 \mu / (2 + \delta)}.
\]
\end{lemma}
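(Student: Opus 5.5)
We prove the two tail bounds by the standard exponential-moment (Chernoff) method. Write $p_i=\mathbb{P}[X_i=1]$, so $\mu=\sum_i p_i$. For the upper tail, fix $\lambda>0$ and apply Lemma~\ref{Markov} (in its basic form $\mathbb{P}[Y\ge a]\le \mathbb{E}[Y]/a$) to $Y=e^{\lambda X}$:
\[
\mathbb{P}[X>(1+\delta)\mu]\le e^{-\lambda(1+\delta)\mu}\,\mathbb{E}[e^{\lambda X}].
\]
Independence factorises the expectation. Each factor satisfies $\mathbb{E}[e^{\lambda X_i}]=1+p_i(e^\lambda-1)\le \exp(p_i(e^\lambda-1))$ because $1+x\le e^x$, so $\mathbb{E}[e^{\lambda X}]\le \exp(\mu(e^\lambda-1))$. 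Choosing $\lambda=\log(1+\delta)$ gives
\[
\mathbb{P}[X>(1+\delta)\mu]\le \left(\frac{e^{\delta}}{(1+\delta)^{1+\delta}}\right)^{\mu}.
\]
For the lower tail we use $e^{-\lambda X}$ with $\lambda>0$ in the same way. This gives the bound $e^{\lambda(1-\delta)\mu}\exp(\mu(e^{-\lambda}-1))$. For $0<\delta<1$ we take $\lambda=-\log(1-\delta)$, which yields $\bigl(e^{-\delta}/(1-\delta)^{1-\delta}\bigr)^{\mu}$.

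It then remains to compare these expressions with the stated exponents, using two elementary calculus inequalities. For the lower tail we need $(1-\delta)\log(1-\delta)\ge -\delta+\delta^2/2$ on $[0,1)$. This holds because the difference vanishes at $0$ and has derivative $-\log(1-\delta)-\delta\ge 0$. Hence $-\delta-(1-\delta)\log(1-\delta)\le -\delta^2/2$. For the upper tail we need $\log(1+\delta)\ge 2\delta/(2+\delta)$ for $\delta\ge0$. Again both sides vanish at $0$, and the derivatives compare as $1/(1+\delta)\ge 4/(2+\delta)^2$, i.e.\ $(2+\delta)^2\ge 4(1+\delta)$. Substituting gives
\[
\delta-(1+\delta)\log(1+\delta)\le \delta-\frac{2\delta(1+\delta)}{2+\delta}=-\frac{\delta^2}{2+\delta},
\]
which is the claimed exponent.

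The main obstacle is only bookkeeping: the boundary cases and the strict inequalities in the statement. When $\delta\ge1$ the lower-tail event $\{X<(1-\delta)\mu\}$ is empty, since $X\ge0$, so the bound is trivial. When $\delta=0$, both right-hand sides equal $1$. Strictness then means $\mathbb{P}[X<\mu]<1$ and $\mathbb{P}[X>\mu]<1$, which hold because $X$ cannot lie strictly below (or strictly above) its mean almost surely. When $\mu=0$, $X=0$ almost surely, so both events have probability zero. For $\delta>0$ and $\mu>0$, strictness comes from the strict inequality $1+x<e^x$ for $x\neq0$, applied to some factor with $p_i>0$.
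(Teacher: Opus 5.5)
Your proof is correct and is the standard exponential-moment (Chernoff) argument that the cited textbook of Alon and Spencer uses; the paper itself gives no proof and only cites that source. Your treatment of the boundary cases ($\mu=0$, $\delta=0$, and $\delta\ge 1$ for the lower tail) is also sound, as is your derivation of the strict inequalities from $1+x<e^x$ applied to a factor with $p_i>0$.
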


\begin{lemma}[\cite{Alon2025}] \label{union}
Let $m$ be a positive integer, let $0 < p < 1/m$ and let $\mathcal{E}_1, \ldots, \mathcal{E}_m$ be independent events such that for $i = 1, \ldots, m$, the event $\mathcal{E}_i$ holds with probability $p_i \geq p$. Then, with probability at least $pm/2$, at least one of the events $\mathcal{E}_1, \ldots, \mathcal{E}_m$ holds.
\end{lemma}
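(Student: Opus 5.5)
The plan is to bound the complementary event directly, using independence. Since $\mathcal{E}_1,\ldots,\mathcal{E}_m$ are independent, so are their complements. The probability that none of the events holds is therefore
\[
\prod_{i=1}^m (1-p_i) \le (1-p)^m \le e^{-pm},
\]
where the first inequality uses $p_i \ge p$ and the second uses $1-y \le e^{-y}$ for all real $y$.

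It follows that at least one event holds with probability at least $1-e^{-x}$, where we set $x := pm$. The hypothesis $0<p<1/m$ gives $0 < x < 1$.

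It remains to check the elementary inequality $1-e^{-x} \ge x/2$ for $x \in [0,1]$. I would use the alternating Taylor bound $e^{-x} \le 1 - x + x^2/2$, which holds for $x \ge 0$ because the remainder term $-x^3e^{-\xi}/6$ is nonpositive. This gives $1-e^{-x} \ge x - x^2/2 = x(1-x/2) \ge x/2$, since $x \le 1$. An alternative is concavity: $1-e^{-x}$ is concave, so on $[0,1]$ it lies above its chord, giving $1-e^{-x} \ge (1-e^{-1})x \ge x/2$.

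Combining the two steps gives the claimed lower bound $pm/2$. No step is a real obstacle. The only point needing care is that the hypothesis $p<1/m$ is exactly what keeps $x = pm$ in the range where the linear lower bound on $1-e^{-x}$ is valid.
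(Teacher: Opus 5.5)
Your proof is correct and complete. The paper gives no proof of this lemma and simply imports it from \cite{Alon2025}; your argument, bounding $\PP[\text{no }\mathcal{E}_i]=\prod_i(1-p_i)\le(1-p)^m\le e^{-pm}$ and then using $1-e^{-x}\ge x/2$ for $x=pm\in[0,1]$, is the natural self-contained route, and either of your two justifications of the last inequality suffices.

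Your write-up also only needs $pm\le 1$, not $pm<1$. This is useful because the proof of Lemma \ref{c2} applies the lemma with $p=1/(2L)$ and $m=\deg_{F'}(v')\le 2L$, and analogously with $p=1/\lceil d(G)\rceil$ and $m=\deg_{F_{\text{red}}}(v)\le\lceil d(G)\rceil$. In both cases the boundary case $pm=1$ can occur, which the lemma as stated does not cover.
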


\begin{lemma}[\cite{Alon2025}]\label{cond prob}
For a given set \( C \), some integer \( T \) and \( 1 - (1/T) \leq p < 1 \), consider a random sequence of nested subsets \( C \supseteq A_0 \supseteq A_1 \supseteq \cdots \supseteq A_T \) obtained as follows: Let \( A_0 \subseteq C \) be a random subset obtained by including each element of \( C \) independently with probability \( 1/2 \). For any outcome of \( A_i \) and for some \( i = 0, \ldots, T-1 \), we define \( A_{i+1} \) to be a random subset of \( A_i \) obtained by including each element of \( A_i \) into \( A_{i+1} \) with probability \( p \) independently. Then, both of the following two statements hold.
\begin{itemize}
    \item[(i)] For integers \( 0 \leq i \leq j \leq T \), let us consider any outcome of \( A_j \). Then, conditional on this outcome of \( A_j \), for each element \( x \in C \), with probability at least \((j-i) \cdot (1-p)/6\) we have \( x \in A_i \).
    \item[(ii)] For integers \( 0 \leq i < j \leq T \) with \( j-i+1 \leq T/2 \), let us consider any outcomes of \( A_i \) and \( A_j \). Then, conditional on these outcomes of \( A_i \) and \( A_j \), for each element \( x \in A_i \), with probability at least \((T/(j-i)) \cdot (1-p)/2\) we have \( x \in A_{j-1} \).
\end{itemize}
\end{lemma}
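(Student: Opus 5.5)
The plan is to reduce both statements to one element at a time and then do an explicit computation for a single element. The sequence $(A_0,\dots,A_T)$ is built by making independent choices for each $x\in C$. So the joint law factorizes over $x$: the ``trajectories'' $(\mathbf 1[x\in A_0],\dots,\mathbf 1[x\in A_T])$ are independent for different $x$. Hence conditioning on the outcome of $A_j$ (or of $A_i$ and $A_j$) affects the law of $x$'s trajectory only through $\mathbf 1[x\in A_j]$ (respectively $\mathbf 1[x\in A_i],\mathbf 1[x\in A_j]$). For a single $x$, the index at which $x$ leaves the sets is explicit:
\[
\mathbb P[x\in A_k]=\tfrac12 p^k, \qquad \mathbb P[x\in A_k\setminus A_{k+1}]=\tfrac12 p^k q, \qquad \text{where } q:=1-p\le 1/T.
\]
Throughout write $m=j-i$.

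For (i), first suppose $x\in A_j$. Then $x\in A_i$ with probability $1$ by nestedness. This suffices because $mq\le Tq\le 1$, so $m q/6\le 1$. Otherwise $x\notin A_j$, and then
\[
\mathbb P[x\in A_i\mid x\notin A_j]=\frac{\tfrac12(p^i-p^j)}{1-\tfrac12p^j}=\frac{p^i-p^j}{2-p^j}.
\]
I would bound the numerator by $p^j(p^{-m}-1)\ge p^j\cdot m(p^{-1}-1)=p^{j-1}mq$ (Bernoulli's inequality) and the denominator by $2$. This gives at least $\tfrac12 p^{T-1}mq$. Finally $p^{T-1}\ge(1-1/T)^{T-1}\ge 1/e$, so the bound is at least $mq/(2e)\ge mq/6$.

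For (ii), condition on $x\in A_i$. If also $x\in A_j$, then $x\in A_{j-1}$ surely. The target bound $Tq/(2m)\le 1/(2m)\le 1/2$ is then trivially met. If $x\in A_i\setminus A_j$, then, given these two facts, $x$ leaves at a step $k+1$ with $k\in\{i,\dots,j-1\}$, and $x\in A_{j-1}$ exactly when it leaves at the last step. Therefore
\[
\mathbb P[x\in A_{j-1}\mid x\in A_i\setminus A_j]=\frac{p^{m-1}q}{1-p^m}\ge\frac{p^{m-1}q}{mq}=\frac{p^{m-1}}{m},
\]
using $1-p^m\le mq$. It then remains to check $p^{m-1}\ge Tq/2$. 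Since $Tq\le 1$, it suffices to show $p^{m-1}\ge 1/2$. This is where the hypothesis $m+1\le T/2$ enters: it gives $p^{m-1}\ge(1-1/T)^{T/2}$. One then checks that $(1-1/T)^{T/2}\ge 1/2$ for all admissible $T$; it tends to $e^{-1/2}$ and is monotone.

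The main (mild) obstacle is the constant-chasing in (i). Crude bounds such as $p^i\ge 1/4$ together with $1-e^{-mq}\ge(1-1/e)mq$ lose too much and fall below $1/6$. The fix is to factor out $p^j$ and use $p^{-m}-1\ge mq/p$, as above.
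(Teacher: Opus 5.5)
Your argument is correct and complete. The paper gives no proof of this lemma; it quotes it from \cite{Alon2025}, so there is no in-paper argument to compare with. Your approach is a valid self-contained proof. Independence of the element trajectories means that conditioning on $A_j$, or on $A_i$ and $A_j$, acts only through $x$'s own indicators. The explicit formulas $\frac{p^i-p^j}{2-p^j}$ and $\frac{p^{m-1}q}{1-p^m}$ then give the bounds. Your remark is also right that the constant $1/6$ in (i) is only reached by extracting $p^{j-1}$ through Bernoulli's inequality, giving $mq/(2e)$; cruder estimates fall short.

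Two small points are worth stating explicitly.

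\begin{itemize}
\item In (i), the step $p^{-m}-1\ge mq/p$ needs $p>0$. The only excluded case is $T=1$, $p=0$, which is immediate: $\mathbb{P}[x\in A_0\mid x\notin A_1]=1/2\ge 1/6$.
\item In (ii), the hypotheses $m\ge 1$ and $m+1\le T/2$ force $T\ge 4$. Since $(1-1/T)^T$ is increasing, you get $p^{m-1}\ge(1-1/T)^{T/2}\ge(3/4)^2=9/16>1/2$. This is the check you left implicit when you said the quantity is monotone.
\end{itemize}
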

\section{Expansion of small sets: Sprinkling} \label{s5}
In this section, we prove Lemma \ref{small set}. We adopt the \textit{sprinkling} technique. Roughly speaking, we sample colors with certain probability in each round so that the final distribution of colors is the same after this process ends. For convenience, denote $R:=A_{(K-1)L}$. We need the following definition.
\begin{definition}
Let $G$ be a graph with a proper edge-coloring $c : E(G) \to R$. Let $\phi$ be a set of forbidden vertices and colors. For $X \subseteq V(G)$ and $Q \subseteq R$, the \textit{restricted external neighborhood} of $X$ in $G$ with respect to the colors in $Q$ is
\[
RN_{Q, \phi}(X) = \{ y \in V(G) \setminus X : \exists x \in X, xy \in E(G), c(xy) \in Q \setminus \phi, y \notin \phi \}.
\]
\end{definition}
The following proposition states that, with high probability, many colors remain available after round one and the first rainbow neighborhood expands appropriately.
\begin{proposition} \label{merged-claim}
Let $p_c = 1/2$, and let $Q_1 \subseteq R$ be a random subset where each color is included independently with probability $p_c$. If $|\phi| \leq d(G) /(\log \log n)^4$, then the following holds.
\begin{enumerate}
    \item[(i)] $\PP\left[|R| \geq \dfrac{d(G)}{t^2\log \log n}\right] \geq \PP\left[|R| \geq \dfrac{|C|}{t^2\log \log n}\right] \geq \dfrac{99}{100}$.
    \item[(ii)] $\PP\left[|RN_{Q_1,\phi}(v_i)| > t^2\log n\right] \geq \dfrac{99}{100}$.
\end{enumerate}
\end{proposition}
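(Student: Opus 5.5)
Both parts reduce to Chernoff bounds for sums of independent indicators, one per color. The independence comes from the construction of the nested sequence: the event that a fixed color $c\in C$ lies in $A_0$, survives all $(K-1)L$ thinning rounds, and is then selected into $Q_1$ depends only on coin flips attached to $c$. These events are therefore mutually independent across colors. I read the thinning rule as $\mathbb{P}[x\in A_{i+1}\mid x\in A_i]=1-\frac1T$. Since $(K-1)L\le T$, every color satisfies
\[
\mathbb{P}[c\in R]=\tfrac12\bigl(1-\tfrac1T\bigr)^{(K-1)L}\ge \tfrac12\bigl(1-\tfrac1T\bigr)^{T}\ge \tfrac18
\]
for $n$ large. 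Throughout I treat $\phi$ as fixed before the colors of $R$ and $Q_1$ are sampled, as it is in Section~\ref{s2}.

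For (i), first note that $|C|\ge \Delta(G)\ge d(G)$. This holds because the coloring is proper, so the edges at a vertex of maximum degree all carry distinct colors. This gives the first inequality in (i) immediately. Next, $|R|$ is a sum of $|C|$ independent indicators with mean at least $|C|/8$. By Lemma~\ref{Chernoff} with $\delta=1/2$, we get $|R|\ge |C|/16$ with probability at least $1-e^{-|C|/64}$. Finally, $|C|\ge d(G)\ge 10^8\log n$, so this probability exceeds $99/100$. Since $t^2\log\log n\ge 16$, we have $|C|/16\ge |C|/(t^2\log\log n)$, which completes (i).

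For (ii), I use Lemma~\ref{min degree} to get $\deg(v_i)\ge d(G)/2$; the edges at $v_i$ carry pairwise distinct colors. Discard every edge $v_iy$ whose color lies in $\phi$ or whose endpoint $y$ lies in $\phi$. Because the coloring is proper, each element of $\phi$ accounts for at most one such edge. So at most $|\phi|\le d(G)/(\log\log n)^4\le d(G)/4$ edges are discarded, and at least $d(G)/4$ edges $v_iy$ remain, with distinct colors and distinct endpoints outside $\phi$. Since $Q_1\subseteq R$, each such $y$ belongs to $RN_{Q_1,\phi}(v_i)$ as soon as $\gamma(v_iy)\in Q_1$. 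This event has probability at least $\frac18\cdot\frac12=\frac1{16}$, and the events are independent over the distinct colors. The count therefore has mean at least $d(G)/64\ge 10^6 t^2\log n(\log\log n)^6$. Lemma~\ref{Chernoff} with $\delta=1/2$ gives at least $d(G)/128>t^2\log n$ such neighbors with probability $1-e^{-\Omega(d(G))}\ge 99/100$.

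There is no serious obstacle. The only points needing care are the independence structure, namely that the membership events for distinct colors in $R$ and $Q_1$ are independent, and that $\phi$ must not depend on $R$ or $Q_1$. The latter holds because the forbidden sets come from the already-fixed path family $\mathcal{P}$.
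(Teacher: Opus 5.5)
Your proposal is correct and follows the paper's route. For (i) you apply a Chernoff bound to the independent per-color indicators of $c\in R$, using $|C|\ge d(G)$. For (ii) you combine Lemma~\ref{min degree} with Chernoff over the distinct colors at $v_i$.

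Your write-up is slightly more careful than the paper's in two places. First, you discard the at most $|\phi|$ forbidden edges at $v_i$ deterministically before applying Chernoff, where the paper subtracts $|\phi|$ from the expectation. Second, you observe that $\PP[c\in Q_1]\ge 1/16$ is a constant, which gives a mean of order $d(G)$ rather than the paper's $\Omega(d(G)/\log\log n)$.
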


\begin{proof}
We first prove (i). Define $q=(1-1/T)^{(K-1)L}/2$. Since $d(G) \leq |C|$, the first inequality is immediate. For the second, note that $\EE[|R|] = |C| q$. By the Chernoff bound (Lemma~\ref{Chernoff}), we have $\PP\left[|R| <|C|/t^2\log \log n\right] < e^{-\log n} < 1/100$.
We now prove (ii). By Lemma~\ref{min degree}, we have $\EE[|RN_{Q_1,\phi_0}(v_i)|] \geq \delta(G) \cdot p_c \cdot q - |\phi_0| =\Omega(\frac{d(G)}{\log\log n})$. Again applying the Chernoff bound (Lemma~\ref{Chernoff}), $\PP\left[|RN_{Q_1,\phi_0}(v_i)| < t^2\log n\right] < e^{-\log n} < 1/100$. \qedhere
\end{proof}
We also need a lemma from \cite{Tomon2024}. Given a bipartite graph, if we randomly sample the colors and vertices, then the size of the neighborhood of these vertices is very unlikely to be much smaller than its expected value.
\begin{lemma}[\cite{Tomon2024}] \label{Tomom}
Let $p, p_c \in (0,1]$, and $\lambda > 1$. Let $G$ be a bipartite graph with vertex classes $A$ and $B$, and let $f : E(G) \to R$ be a proper edge coloring. Let $U \subseteq A$ be a random sample of vertices, each vertex included independently with probability $p$, and let $Q \subseteq R$ be a random sample of colors, each included independently with probability $p_c$. Let $\mu := \EE(|N_{Q}(U)|)$, and suppose that every vertex in $A$ has degree at most $\Delta$. If $\Delta + |A| < \frac{\mu}{32\lambda \log_2(\lambda (pp_c)^{-1})}$ then
\[
\PP\left( |N_{Q}(U)| \leq \frac{\mu}{64\lambda \log_2(\lambda (pp_c)^{-1})}\right) < 2 e^{-\lambda}.
\]
\end{lemma}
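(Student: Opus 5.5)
The approach is a lower-tail concentration inequality for a monotone function of independent coordinates; I would aim for Talagrand's inequality in its certifiable form. Write $X:=|N_Q(U)|$. This is a function of the independent indicator variables $\{\mathbf{1}[a\in U]\}_{a\in A}$ and $\{\mathbf{1}[c\in Q]\}_{c\in R}$, and it is monotone increasing in all of them. The two facts I would establish first are a bounded-differences property and a certificate property.

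\emph{Bounded differences.} Toggling a single vertex $a\in A$ changes $X$ by at most $\deg(a)\le\Delta$. Toggling a single color $c$ changes $X$ by at most the number of edges of color $c$. Because the coloring is proper, that color class is a matching, and since every edge meets $A$ it has at most $|A|$ edges. So every coordinate has influence at most $D:=\Delta+|A|$.

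\emph{Certifiability.} If $X\ge s$, then choosing for each of $s$ vertices $b\in N_Q(U)$ one witnessing edge $ab$ (with $a\in U$ and $f(ab)\in Q$) gives a set of at most $2s$ coordinates that forces $X\ge s$.

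I would then apply Talagrand's inequality to the rescaled function $X/D$, which is $1$-Lipschitz and $(2D)$-certifiable in units of $X/D$. This gives, for the median $M$ of $X$,
\[
\PP\bigl(X\le M-\tau\sqrt{M D}\bigr)\le 2e^{-\tau^2/16}.
\]
Taking $\tau\approx\sqrt{\lambda}$ and using $D\ll \mu$, this would bound the probability that $X$ falls to a constant fraction of $M$ by about $e^{-\lambda}$.

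The main obstacle is relating the median $M$ to the mean $\mu$, because the upper tail of $X$ is not controlled by Talagrand in the same way. I expect this step to be the source of the factor $\log_2(\lambda(pp_c)^{-1})$. To handle it, I would first try a dyadic decomposition of $B$. For each $b\in B$, the events $\{a\in U,\ f(ab)\in Q\}$ over its neighbours $a$ are independent: the endpoints $a$ are distinct, and the colors at $b$ are distinct because the coloring is proper. Hence $w(b):=\PP(b\in N_Q(U))=1-(1-pp_c)^{\deg(b)}$ is explicit.

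Partition $B$ into classes $B_i=\{b: w(b)\in(2^{-i-1},2^{-i}]\}$. Each $b$ has $w(b)\ge pp_c$, so only the classes with $i\le \log_2((pp_c)^{-1})+O(1)$ are nonempty; the classes where $w(b)<\lambda^{-1}pp_c$ would be lumped together and handled crudely. This leaves at most $\log_2(\lambda(pp_c)^{-1})$ relevant classes. By pigeonhole, one class $B_{i^*}$ carries expected contribution $\mu_{i^*}\ge\mu/\log_2(\lambda(pp_c)^{-1})$.

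Restricted to $B_{i^*}$, the variable $X_{i^*}=|N_Q(U)\cap B_{i^*}|$ has mean $\mu_{i^*}$ and variance that I would bound via the same influence bound $D$. Since all its summands have comparable probabilities, its mean and median can be compared directly. Concretely, I would show $\PP(X_{i^*}\ge \mu_{i^*}/2)\ge 1/2$ by a second-moment computation: covariances arise only between pairs $b,b'$ sharing a neighbour $a$ or a color, and there are at most $D$ such partners per $b$.

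Talagrand applied to $X_{i^*}$ then gives
\[
\PP\bigl(X_{i^*}\le \mu_{i^*}/(64\lambda)\bigr)<2e^{-\lambda},
\]
provided $\mu_{i^*}\ge 32\lambda D$. This is exactly the hypothesis $\Delta+|A|<\mu/(32\lambda\log_2(\lambda(pp_c)^{-1}))$. Since $X\ge X_{i^*}\ge \mu/(64\lambda\log_2(\lambda(pp_c)^{-1}))$ outside this event, the lemma follows.
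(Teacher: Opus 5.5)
The paper gives no proof of this lemma; it is quoted from Tomon's paper. So there is no route to compare against, and I can only assess your argument on its own terms. Your bounded-difference bound $D=\Delta+|A|$ and your $2s$-coordinate certificates are correct. However, the concentration step, which carries the whole proof, does not give what you claim.

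\textbf{The concentration step.} Rescaling to $X/D$ changes nothing. For $Y=X/D$ with certificate function $f(s)=2Ds$, Talagrand gives
\[
\PP\bigl(Y\le b-\tau\sqrt{f(b)}\bigr)\,\PP(Y\ge b)\le e^{-\tau^2/4}.
\]
At $b=M/D$ the deviation is $\tau\sqrt{2M}$ in $Y$-units, that is, $\tau D\sqrt{2M}$ in $X$-units. This is exactly what the $D$-Lipschitz, $2s$-certifiable version gives applied to $X$ directly. Your $\sqrt{MD}$ comes from evaluating $f$ at $M$ instead of $M/D$ and reading the answer in $X$-units.

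With $\tau^2\asymp\lambda$, this yields a constant-fraction lower tail only when $M\gtrsim\lambda D^2$. The hypothesis is linear in $D$, namely $\mu\gtrsim\lambda D\log_2(\lambda(pp_c)^{-1})$, so your final step (``provided $\mu_{i^*}\ge 32\lambda D$'') is unsupported.

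The missing idea is this: one certificate coordinate can have influence up to $D$, but the total influence of a certificate for $X\ge s$ is only $2s$. You need a concentration form that exploits this. One option is Talagrand's convex-distance inequality with weights $\alpha_i$ equal to the number of chosen witness edges that use coordinate $i$. Then $\sum_i\alpha_i^2\le D\sum_i\alpha_i\le 2D\lceil b\rceil$, and so
\[
\PP(X\le b-t)\,\PP(X\ge b)\le\exp\bigl(-t^2/(8D\lceil b\rceil)\bigr).
\]

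\textbf{The second-moment step.} The claim that each $b$ has at most $D$ correlated partners is false. A vertex $b$ can share a neighbour with up to $\deg(b)\Delta$ vertices and a colour with up to $\deg(b)|A|$ vertices. Grouping by $w(b)$ does not rescue a naive covariance count either. In the top class $w(b)\in(1/2,1]$ degrees are unbounded, and bounding covariances by overlap probabilities gives only about $D\,pp_c\sum_b\deg(b)$, which can far exceed $D\mu$.

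The right tool is Efron--Stein with $X_i^-$, meaning $X$ with coordinate $i$ switched off. Only a vertex of $N_Q(U)$ with a unique witness edge can be lost, and it is lost under exactly two coordinates. Hence $\sum_i(X-X_i^-)\le 2X$ and $0\le X-X_i^-\le D$, which give $\mathrm{Var}(X)\le\sum_i\EE(X-X_i^-)^2\le 2D\mu$. This bound holds for $X$ itself. Chebyshev then puts the median above $\mu/2$, and together with the weighted Talagrand inequality the dyadic decomposition becomes unnecessary.

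In fact the same two facts say that $X/D$ is $(2,0)$-self-bounding. The lower-tail inequality for self-bounding functions (McDiarmid--Reed; Boucheron--Lugosi--Massart) then gives $\PP(X\le\mu/2)\le e^{-c\mu/D}$ directly, with no $\log_2(\lambda(pp_c)^{-1})$ loss.

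A minor point on the class count. Since $w(b)\ge pp_c$ whenever $\deg(b)\ge 1$, there are no classes with $w(b)<\lambda^{-1}pp_c$. You have $\lfloor\log_2(1/(pp_c))\rfloor+1$ classes, which is at most $\log_2(\lambda(pp_c)^{-1})$ only when $\lambda\ge 2$. Some restriction of this kind is unavoidable: for $p=p_c=1$, a single edge and $\lambda$ close to $1$, the statement as written fails.
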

To prove Lemma \ref{small set}, we need to establish the following technical lemma. It states that if $G$ is $\log$-maximal, then every vertex set of moderate size expands well, even after forbidding some colors and vertices. 
\begin{lemma} \label{l6.4}
Let $t$ be as in Lemma \ref{pass to expander}.
Let $\Theta(1/(\log n\cdot \log \log n)) < p_c \leq 1$, $\lambda=100\cdot (\log \log n)^2$ and $n > 0$ be a sufficiently large integer. Let $G$ be a graph on $n$ vertices with a proper edge-coloring $c : E(G) \to R$ and $B \subseteq V(G)$ satisfying the following.
\begin{enumerate}
    \item[(i)] $G$ is log-maximal.  
    \item[(ii)] $d(G) \geq 10^8\cdot t^2 \cdot \log n \cdot (\log \log n)^{6}$.
    \item[(iii)] $|\phi|\leq \frac{d(G)}{(\log \log n)^2} $.
    \item[(iv)] $t^2\log n \leq |B| \leq t^2\log ^3 n$.  
\end{enumerate}
Let $Q \subseteq R$ be a random subset of colors such that each color is chosen with probability $p_c$ independently. Then with probability at least $1 - 2e^{- \lambda^{1/2}}$, we have  
\[
|RN_{Q, \phi}(B)| \geq \frac{1}{\log n}|B|.
\]
\end{lemma}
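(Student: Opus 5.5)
The argument has two parts. First, a deterministic expansion of $B$ in the uncoloured graph using the robust expansion of $G$. Second, Lemma \ref{Tomom} applied to a bipartite graph between $B$ and its neighbourhood, showing that a random colour sample $Q$ keeps a $1/\log n$ fraction of that neighbourhood.

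\textbf{Step 1: the uncoloured neighbourhood of $B$ is large.} By Lemma \ref{connection}, $G$ is a robust sublinear expander. Since $|B|\le t^2\log^3 n\le n^{1/2}$ for large $n$ (if $t$ is so large that this fails, replace $\varepsilon=1/2$ by $\varepsilon = 1-\log|B|/\log n$, which still gives $\varepsilon\gtrsim 1/\log n$), we apply Definition \ref{rse} with $U=B$. We take $F$ to be the set of edges incident to $B$ whose colour lies in $\phi$, together with the edges inside $B$.

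By properness, each colour of $\phi$ appears at most once at each vertex of $B$. So the colour part of $F$ has at most $|\phi||B|\le d(G)|B|/(\log\log n)^2$ edges. The edges inside $B$ number at most $|B|^2 \le t^2\log^3 n\,|B|$, which is $o(d(G)|B|/\log n)$ only if we are careful, since $d(G)\ge 10^8t^2\log n(\log\log n)^6$. If this is too large, I will instead trim $B$ to a subset of size exactly about $t^2\log n$, because expansion of a subset into $V\setminus B$ still counts once we discard vertices of $B$; this loses only a constant factor, which the target $|B|/\log n$ tolerates. Either way, $|F|\le \frac{\varepsilon}{4}d(G)|B|$.

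This yields a set $Y:=N_{G-F}(B)\setminus \phi$ with $|Y|\ge \frac{\varepsilon}{4}|B|-|\phi|$. I have to check that $|\phi|$ does not wipe this out. Here I expect to use instead that many edges leave $B$: the minimum degree bound $\delta(G)\ge d(G)/2$ from Lemma \ref{min degree} gives at least $|B|d(G)/2 - |F| - |B|^2$ edges from $B$ to $V\setminus B$ with allowed colours. So it is cleaner to work with edge counts than with $|Y|$ directly.

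\textbf{Step 2: apply Lemma \ref{Tomom}.} Form the bipartite graph $H$ with parts $A:=B$ and $V(G)\setminus(B\cup\phi)$, keeping only edges with colours in $R\setminus\phi$. Take $p=1$, so $U=B$ deterministically. Then $N_Q(U)=RN_{Q,\phi}(B)$.

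For the expectation, each vertex $y$ with $m_y\ge1$ edges into $B$ has $m_y$ distinct colours by properness. Hence $\Pr[y\in N_Q(B)] = 1-(1-p_c)^{m_y}\ge \min(1,m_yp_c)/2$, and
\[
\mu \ge \tfrac12\sum_y \min(1,m_y p_c).
\]
To lower-bound this, split according to whether $m_y\ge 1/p_c$.
\begin{itemize}
\item If most edges go to vertices with $m_y<1/p_c$, then $\mu\gtrsim p_c\,e(H)\gtrsim p_c|B|d(G)$.
\item Otherwise the number of such heavy vertices is at least about $|N_H(B)|$, restricted to heavy ones. I bound this from below by robust expansion, deleting the light edges (at most $\frac{\varepsilon}{4}d|B|$ of them) as part of $F$. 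This gives $\mu\gtrsim \varepsilon|B|$.
\end{itemize}

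In both cases $\mu\gtrsim |B|\min(p_c d(G),\varepsilon)$. The degree bound in Lemma \ref{Tomom} is $\Delta\le |B|$ (degrees of $A$-vertices into the other side are at most $n$). So I instead apply the lemma with the roles oriented so that $A=B$ and $\Delta$ is the maximum degree, and the requirement becomes $\Delta+|B|<\mu/(32\lambda\log_2(\lambda/p_c))$.

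\textbf{Main obstacle.} The degree condition $\Delta+|A|\ll\mu/\lambda\log$ of Lemma \ref{Tomom} is the main obstacle, since $\Delta$ could be as large as the maximum degree of $G$. I would handle it by restricting each vertex of $B$ to a subset of at most $D:=d(G)/(\log\log n)$ of its allowed edges, so that $\Delta=D$. Then $\mu\gtrsim\min(p_cD|B|,\varepsilon|B|)$, and with $p_c\gtrsim 1/(\log n\log\log n)$, $|B|\ge t^2\log n$ and $\lambda=100(\log\log n)^2$, the condition reduces to comparing $D+|B|$ with $|B|\cdot d(G)/(\log n(\log\log n)^4)$. This holds thanks to the $(\log\log n)^6$ slack in (ii).

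The conclusion is that with probability at least $1-2e^{-\lambda}\ge 1-2e^{-\lambda^{1/2}}$,
\[
|RN_{Q,\phi}(B)|\ge \mu/(64\lambda\log_2(\lambda/p_c))\ge |B|/\log n.
\]
The bookkeeping to verify this in the heavy-vertex case, where $\mu$ is only about $\varepsilon|B|$ and $\varepsilon$ may be as small as about $1/\log n$, is the delicate point. There I would choose $\varepsilon$ as large as $|B|\le n^{1-\varepsilon}$ allows, namely $\varepsilon\ge 1/2$ whenever $t^2\log^3n\le\sqrt n$.
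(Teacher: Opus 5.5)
Your proposal has a genuine gap in the heavy-vertex case. There you plan to apply Lemma~\ref{Tomom} with only $\mu\gtrsim\varepsilon|B|$. The hypothesis of that lemma, however, is $\Delta+|B|<\mu/(32\lambda\log_2(\lambda/p_c))$, and $32\lambda\log_2(\lambda/p_c)=\Theta((\log\log n)^3)$. So the hypothesis fails however you choose $\varepsilon\le 1$. Your closing remark only checks that the \emph{conclusion} $\mu/(64\lambda\log_2(\lambda/p_c))\ge |B|/\log n$ would be strong enough.

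Nor can you fall back on a first-moment argument. With heavy threshold $1/p_c$, a heavy vertex is reached only with probability $\ge 1-1/e$, so Markov gives a constant failure probability, not $2e^{-\lambda^{1/2}}$.

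The missing idea, which is the paper's Case 1, is to put the threshold at $\Delta=\lambda^{1/2}p_c^{-1}$. Then every $y\in S=\{y:|N_H(y)|\ge\Delta\}$ is reached with probability $\ge 1-(1-p_c)^{\Delta}\ge 1-e^{-\lambda^{1/2}}$. Markov's inequality applied to the number of unreached vertices of $S$ then gives $|W|\ge|S|/2$ with probability $\ge 1-2e^{-\lambda^{1/2}}$. This is where the $\lambda^{1/2}$ in the statement comes from, and no concentration lemma is needed.

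The paper gets $|S|\ge 2|B|/\log n$ in this case directly from log-maximality, applied to $G[B\cup S]$ and $G[V\setminus B]$ when $e_G(B,T)\le d(G)|B|/4$. It does not use robust expansion minus $|\phi|$, which, as you noticed, can fail because (iii) bounds $|\phi|$ by $d(G)/(\log\log n)^2$ and not by $|B|$. Note also that your split ("most edges go to light vertices") is relative to $e(H)$. Consequently the light edges need not number at most $\frac{\varepsilon}{4}d(G)|B|$, so they cannot simply be put into $F$. The split must be absolute, as in $e_G(B,T)\lessgtr d(G)|B|/4$.

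Your degree-capping fix for Lemma~\ref{Tomom} also fails. If each vertex of $B$ keeps at most $D$ edges, then by the union bound $\mu\le p_c\,e(H)\le p_cD|B|$. The requirement $D<\mu/(32\lambda\log_2(\lambda/p_c))$ therefore forces $p_c|B|>32\lambda\log_2(\lambda/p_c)$. For $p_c\approx 1/(\log n\log\log n)$ this means $|B|\gtrsim\log n(\log\log n)^4$, whereas (iv) only gives $|B|\ge t^2\log n$, so it fails for small $t$. The $(\log\log n)^6$ slack in (ii) does not help, because $D$ scales with $d(G)$.

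In the paper, the degree parameter fed into Lemma~\ref{Tomom} is instead $\lambda^{1/2}p_c^{-1}$, the bound on the $H$-degrees of the light neighbourhood vertices $T$. This bound comes for free once the heavy vertices are split off. In the complementary case $e_G(B,T)>d(G)|B|/4$, splitting $T$ further at $2/p_c$ yields $\EE|N_{H_Q}(B)|\gtrsim d(G)p_c|B|/\lambda^{1/2}$. That is large enough to beat $\lambda^{1/2}p_c^{-1}+|B|$ after dividing by $32\lambda^{1/2}\log_2(\lambda^{1/2}p_c^{-1})$, and this is where (ii) is actually used.
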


\begin{proof}
Let $A = N_G(B)$. Let $H$ be a bipartite graph with vertex classes $A$ and $B$ and edge set $E(H) = \{xy : x \in B, y \in A \setminus \phi, c(xy) \in R \setminus \phi\}$. Let $H_Q$ be the subgraph of $H$ whose edges are colored from $Q$. Let $\Delta = \lambda^{1/2} p_c^{-1}$. Let $S = \{v \in A : |N_H(v)| \geq \Delta\}$ and $T = A \setminus S$. We consider two cases based on the number of edges between $B$ and $T$ in $G$. \medskip \\
\textbf{Case 1.} \( e_G(B, T) \leq \frac{d(G)|B|}{4}  \). \medskip
\begin{claim}
\( |S| \geq \frac{2|B|}{\log n} \) 
\end{claim}
Suppose \( |S| < \frac{2|B|}{\log n} \). Let \( C = V(G) \setminus B \). Since \( E(G) = E(G[B \cup S]) \cup E(G[C]) \cup E(G[B, T]) \), we have $d(G[B \cup S])(|B| + |S|)/2 = e(G[B \cup S]) \geq e(G) - e(G[C]) - e_G(B, T)  = d(G)n/2 - d(G[C])|C|/2 - e_G(B, T)$.   
As \( G \) is log-maximal, we have
\[
\frac{d(G[B \cup S])}{\log(|B| + |S|)} \leq \frac{d(G)}{\log n}, \quad \text{and} \quad \frac{d(G[C])}{\log |C|} \leq \frac{d(G)}{\log n}.
\]
Hence,
\[
\frac{d(G)(|B| + |S|) \log(|B| + |S|)}{2 \log n} \geq \frac{d(G)n}{2} - \frac{d(G)|C| \log |C|}{2 \log n} - e_G(B, T).
\]
As \( e_G(B, T) \leq \frac{d(G)|B|}{4}  \), we obtain
\begin{align*}
&(|B| + |S|) \log(|B| + |S|) \geq n \log n - |C| \log |C| - \frac{2 \log n}{d(G)} e_G(B, T) \\
&\geq |B| \log n + |C| (\log n - \log |C|) - \frac{2 \log n}{d(G)} \cdot \frac{d(G)|B|}{4 } \geq \frac{1}{2}|B| \log n .
\end{align*}
Since \( |S| < \frac{2|B|}{\log n}  \), we have
\[
|S| \log\left(|B|+\frac{2|B|}{\log n}\right) \geq |B| \left(\frac{1}{2}\log n - \log\left(|B|+\frac{2|B|}{\log n}\right)\right) .
\]
Therefore,
\[
|S| \geq |B|\left(\frac{1}{2}\frac{\log n}{\log\left(|B|+\frac{2|B|}{\log n}\right) }-1\right)>\frac{2}{\log n}|B|,
\]
a contradiction.\medskip \hfill $\blacksquare$

Now let \( W = N_{H_Q}(B) \cap S \). For every vertex \( y \in S \), we have
\[
\PP(y \in W) = 1 - (1 - p_c)^{|N_H(y) \cap B|} \geq 1 - (1 - p_c)^{\lambda^{1/2} p_c^{-1}} \geq 1 - e^{-\lambda^{1/2}}.
\]
Thus, \( \EE[|W|] \geq |S| (1 - e^{-\lambda^{1/2}}) \) and so \( \EE[|S \setminus W|] \leq |S| e^{-\lambda^{1/2}} \). By Markov's inequality (see Lemma \ref{Markov}), we have
\[
\PP(|W| \leq |S|/2) = \PP\left(|S \setminus W| \geq |S|/2\right) \leq \frac{2 \EE[|S \setminus W|]}{|S|} \leq 2e^{-\lambda^{1/2}}.
\]
Hence, we have
\[
\PP\left(|W| \geq \frac{|B|}{\log n}\right) \geq \PP(|W| \geq |S|/2) \geq 1 - 2e^{-\lambda^{1/2}}.
\]
This concludes the proof of Case 1.\medskip \\
\textbf{Case 2.} \( e_G(B, T) > \frac{d(G)|B|}{4}  \).\medskip

By (iii), we have
\[
e_H(B, T) \geq e_G(B, T) - |\phi| >\frac{d(G)|B|}{5}.
\]

\noindent Let \( T_1 = \{v \in T : |N_H(v)| < 2p_c^{-1}\} \) and \( T_2 = T \setminus T_1 \). We consider two subcases. \medskip \\ 
\noindent
\textbf{Subcase 2.1.} \(e_H(B, T_1) \geq e_H(B, T_2)\). \medskip \\
Let \(W = N_{H_Q}(B) \cap T_1\). For \(y \in T_1\), we have
\[
\PP(y \in W) = 1 - (1 - p_c)^{|N_H(y) \cap B|} > 1 - \left(1 - \frac{p_c|N_H(y) \cap B|}{2}\right) = \frac{p_c|N_H(y) \cap B|}{2}.
\]
Then
\[
\EE[|W|] = \sum_{y \in T_1} \PP(y \in W) > \frac{p_c}{2} \sum_{y \in T_1} |N_H(y) \cap B| = \frac{p_c}{2} e_H(B, T_1) \geq \frac{p_c}{4} e_H(B, T) > \frac{d(G)p_c|B|}{20}.
\]
\medskip 
\noindent
\textbf{Subcase 2.2.} \(e_H(B, T_1) < e_H(B, T_2)\).\\
Let \( W = N_{H_Q}(B) \cap T_2 \). For every vertex \( y \in T_2 \), we have
\[
\PP(y \in W) = 1 - (1 - p_c)^{|N_H(y) \cap B|} > 1 - e^{-2} > 2/3.
\]
Thus,
\[
\EE[|W|] = \sum_{y \in T_2} \PP(y \in W) >\frac{2}{3}|T_2| \geq \frac{2e_H(B, T_2)}{3\Delta} \geq \frac{e_H(B, T)}{3\Delta} > \frac{d(G)p_c|B|}{15\lambda^{1/2}} .
\]
In both subcases, we have \( \EE[|N_{H_Q}(B)|] \geq \EE[|W|] \geq \frac{d(G)p_c|B|}{15\lambda^{1/2}}  \). We will apply Lemma \ref{Tomom} with \[ (p, p_c, \lambda, G, A, B, K) = (1, p_c, \lambda^{1/2}, H[B, T], B, T, \Delta). \] 
Since $p_c>\Theta(1/(\log n\cdot \log \log n)) $, $d(G) \geq 10^8\cdot t^4 \cdot \log n \cdot (\log \log n)^{6}$ and also $|B| \geq  t^2\log n$, we have
\[
\frac{\EE[|N_{H_Q}(B)|]}{32\lambda^{1/2} \log_2(\lambda^{1/2} p_c^{-1})} \geq \frac{d(G)p_c|B|}{480\lambda \log_2(\lambda^{1/2} p_c^{-1})}  \geq \lambda^{1/2} p_c^{-1} + |B|.
\]
Thus by Lemma \ref{Tomom}, we have
\[
\PP \left( |N_{H_Q}(B)| \leq \frac{\ln 2\cdot\EE[|N_{H_Q}(B)|]}{64\lambda^{1/2} \log(\lambda^{1/2} p_c^{-1})} \right) \leq 2e^{-\lambda^{1/2}}.
\]
Therefore, with probability at least \( 1 - 2e^{-\lambda^{1/2}} \), we have
\[
|N_{H_Q}(B)| > \frac{\ln 2 \cdot \EE[|N_{H_Q}(B)|]}{64\lambda^{1/2} \log(\lambda^{1/2} p_c^{-1})} > \frac{1}{\log n}|B|,
\]
 by (ii). This completes the proof of Case 2.
\end{proof}
\begin{proof}[Proof of Lemma \ref{small set}]
For each \( i \in [L]\setminus\{1\} \), let \( Q_i \) be a random sample of \( Q \) where each color is included independently with probability \( q_i>\Theta(1/(\log n\cdot \log \log n)) \). Let $\phi$ be a set of forbidden vertices and colors of size $O(t^2\log n \log \log n)$.
We recursively define \(  S_i, B_i \) as follows. Let
\(
S_1 = B_1 := RN_{Q_1,\phi}(v_i).
\)
For \( i \in [L]\setminus\{1\} \), let
\[
S_{i+1} := \left\{ y \in N(B_i) \setminus (B_i \cup \phi) : 
\exists x \in B_i \text{ s.t. } y \notin \phi,\ xy \in E(G),\ f(xy) \in Q_{i+1} \setminus \phi \right\},
\]
and set \( B_{i+1} := B_i \cup S_{i+1} \).

We now apply Lemma \ref{l6.4} to \( (G, B_i, q_i, \lambda)_{\ref{l6.4}} \) for every \( i \in [L]\setminus\{1\} \). In each step, we grow $B_i$ to $B_{i+1}$ by Lemma \ref{l6.4} until $|B_i| > t^2\log^3 n$. This process must stop in at most $L$ steps, where $L$ is defined in Section \ref{s4}. Thus, with probability at least $1-2Le^{-\lambda^{1/2}}>99/100$,
there exists a subset $B\subseteq RP_{(K-1)L}$ such that $|B|\geq t^2\log ^3 n$. Thus, by Proposition \ref{merged-claim}, with probability at least $6/7$, we have $|RP_{(K-1)L}|\geq  t^2\log ^3n $.
This completes the proof of Lemma \ref{small set}. 
\end{proof}

\section{Expansion of large sets: Thinning} \label{s6}
In this section, we prove Lemma \ref{iterate}. Here, we adopt the inverse process of sprinkling, namely the \textit{thinning} method. For some fix $k\in \{1,2,\ldots,K-1\}$, let \(\cE_{k-1}\) be the event that \(|RP_{(k-1)L}| < f(k-1)\), and note that $\cE_{k-1}$ is determined by the outcome of \(A_{(k-1)L}\). Define the event $F_k:=\cE_{k-1} \cap \overline{\cE_{k}}$. Our aim is to show
\begin{equation} \label{bad}
 \PP[F_k] \leq 1/4^k.
\end{equation}
First we have the following.
\begin{claim}\label{assumption}
We have the following.
\begin{itemize}
    \item[(i)] \(\PP[\cE_{k-1}] \geq 1/4^k\).
    \item[(ii)] \(\EE[|RP_{kL}|\mid \cE_{k-1}] \geq f(k)/4^k\). 
\end{itemize}
\end{claim}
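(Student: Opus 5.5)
The claim should be read as a reduction: both items are harmless assumptions, because if either fails then \eqref{bad} already holds and Lemma~\ref{iterate} is proved for this $k$. So I will prove the contrapositive in each case: assuming \eqref{bad} is false, i.e.\ $\PP[F_k] > 1/4^k$, I will derive (i) and (ii). Recall that $F_k=\cE_{k-1}\cap\overline{\cE_k}$, where $\overline{\cE_k}=\{|RP_{kL}|\ge f(k)\}$.

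For (i), the only input is the inclusion $F_k\subseteq \cE_{k-1}$. This gives $\PP[\cE_{k-1}]\ge \PP[F_k]$. Hence if $\PP[\cE_{k-1}]<1/4^k$, then $\PP[F_k]<1/4^k$ and \eqref{bad} holds. Otherwise (i) holds.

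For (ii), I would bound the conditional expectation from below by restricting to $F_k$, using that $|RP_{kL}|$ is nonnegative and is at least $f(k)$ on $\overline{\cE_k}$:
\[
\EE\bigl[|RP_{kL}|\bigm| \cE_{k-1}\bigr]\ \ge\ f(k)\cdot \PP\bigl[\overline{\cE_k}\bigm|\cE_{k-1}\bigr]\ =\ f(k)\cdot\frac{\PP[F_k]}{\PP[\cE_{k-1}]}\ \ge\ f(k)\,\PP[F_k].
\]
By (i), $\PP[\cE_{k-1}]>0$, so the conditioning is well defined. If the left-hand side is less than $f(k)/4^k$, this chain forces $\PP[F_k]<1/4^k$, and \eqref{bad} again holds. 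Otherwise (ii) holds.

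Combining the two cases, whenever \eqref{bad} fails, both (i) and (ii) hold. So in the remainder of the proof of Lemma~\ref{iterate} we may assume (i) and (ii) throughout. There is no real obstacle here; the only points needing care are the direction of the events (that $F_k\subseteq\cE_{k-1}$, and that $\overline{\cE_k}$ is the event $|RP_{kL}|\ge f(k)$) and checking that the conditioning on $\cE_{k-1}$ is legitimate. The substantive work comes afterwards, when these assumptions are fed into the expansion-rate argument for the thinning steps.
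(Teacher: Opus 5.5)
Your proposal is correct and follows the paper's approach: the paper also treats the claim as a without-loss-of-generality reduction, with the one-line proof that if (i) or (ii) fails then $\PP[F_k]\le 1/4^k$ holds directly. You supply the details the paper leaves implicit, namely $F_k\subseteq\cE_{k-1}$ for (i) and the Markov-type bound $\EE[|RP_{kL}|\mid\cE_{k-1}]\ge f(k)\,\PP[F_k]/\PP[\cE_{k-1}]\ge f(k)\,\PP[F_k]$ for (ii).
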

\begin{proof}
If (i) or (ii) is false, then we have \eqref{bad} true directly.
\end{proof}
Let
\begin{equation} \label{epi}
\varepsilon = \frac{10^{k-1}}{2 \log n}
\end{equation}
so that \(f(k-1) = n^{1-\varepsilon}\). The main step to prove Lemma \ref{iterate} is to show the following lemma. Roughly speaking, this lemma states that when conditioning on the event \(\cE_{k-1}\), for each \(j = 1, \dots, L/2 - 1\), we expect \(|RP_{kL-j-1}|\) to be much larger than \(|RP_{kL-j+1}|\), apart from some error term.
\begin{lemma}\label{expansion rate}
For each \(j = 1, \dots, L/2 - 1\), we have
\[
\EE[|RP_{kL-j-1}|\mid \cE_{k-1}] \geq \left(1 + \frac{\varepsilon}{528}\right) \cdot \EE[|RP_{kL-j+1}|\mid \cE_{k-1}] - 15 \cdot 4^k \cdot \frac{\varepsilon}{528} \cdot n \cdot \exp\left(-\frac{\varepsilon L}{176}\right).
\]
\end{lemma}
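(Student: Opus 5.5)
We follow the strategy of Lemma 4.1 in \cite{Alon2025}, adapted to the forbidden sets $\phi_0,\phi_1$. Fix $j$ and write $i_- = kL-j-1$, $i_0 = kL-j$, $i_+ = kL-j+1$. Conditioning on $\cE_{k-1}$ only involves $A_{(k-1)L}$. The sequence is nested with $(k-1)L \le i_- < i_0 < i_+ \le kL$, and $j+1 \le L/2 \le T/2$. So Lemma \ref{cond prob} applies to the triple $(A_{(k-1)L}, A_{i_0}, A_{i_\pm})$ after conditioning on $A_{(k-1)L}$ and $A_{i_0}$ (and, for part (ii), on $A_{(k-1)L}$ too). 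The plan is to condition on an outcome of $A_{i_0}$ together with $\cE_{k-1}$. Write $U := RP_{i_0}$. We then compare $\EE[|RP_{i_-}|]$ and $\EE[|RP_{i_+}|]$ with $|U|$, using a robust expansion of $U$. On $\cE_{k-1}$ we have $|U| \le |RP_{(k-1)L}| < f(k-1) = n^{1-\varepsilon}$. So if $|U|$ is not tiny, Definition \ref{rse} together with Lemma \ref{connection} gives $|N_{G-F}(U)| \ge \frac{\varepsilon}{4}|U|$ for every $F$ with $|F| \le \frac{\varepsilon}{4} d(G)|U|$.

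\textbf{Step 1 (the boundary edges).} Let $F_0$ be the set of edges incident to $\phi_0$ or coloured in $\phi_1$, together with the edges between $U$ and $N(U)$ whose colour class is ``bad''. Since $|\phi_0|+|\phi_1| = O(t^2\log n\log\log n)$, these forbidden edges number at most $O(t^2\log n \log\log n)\cdot\Delta$ in total. Because the colouring is proper, the edges of a given colour meeting $U$ form a matching, so we count them per vertex of $U$ instead. This is where we need $|U| \gg |\phi_0|$, coming from Lemma \ref{small set} and the $f(K-1)$ threshold, so that these forbidden edges are at most $\frac{\varepsilon}{8}d(G)|U|$. Removing them, expansion yields a set $M \subseteq N(U)\setminus\phi_0$ of size at least $\frac{\varepsilon}{8}|U|$. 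We then split $M$ into $M_1$ and $M_2$. A vertex $v$ lies in $M_1$ if it is joined to $U$ by an edge of colour $c\notin A_{i_0}$. It lies in $M_2$ if every such edge has colour in $A_{i_0}$; in that case $c$ lies on every rainbow path to the endpoint $u\in U$ (otherwise $v\in U$). At least one of $M_1, M_2$ has size at least $\frac{\varepsilon}{16}|U|$.

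\textbf{Step 2 (two cases).} Suppose first that $|M_1|$ is large. By Lemma \ref{cond prob}(i), each colour $c\notin A_{i_0}$ lies in $A_{i_-}$ with probability at least $(1/T)/6$, independently over colours. So each $v\in M_1$ enters $RP_{i_-}\setminus U$ with probability $\Omega(1/T)$; we use that rainbow paths to $u$ avoid $c$ since $c\notin A_{i_0}$. Summing and using $T/(j+1)$ scaling properly, this gives $\EE[|RP_{i_-}|] \ge |U|(1+\Omega(\varepsilon))$. Suppose instead that $|M_2|$ is large. For each $u\in U$ incident to $M_2$, the colour $c$ is forced on all rainbow $A_{i_0}$-paths to $u$. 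By Lemma \ref{cond prob}(ii), $c$ is deleted when passing to $A_{i_+}$ with probability $\Omega(1/j)\cdot$, so $u\notin RP_{i_+}$ with that probability. Since the edges between $U$ and $M_2$ with the same colour form a matching, this gives $\EE[|RP_{i_+}|] \le |U|(1-\Omega(\varepsilon))$. In both cases
\[
\EE[|RP_{i_-}|\mid A_{i_0}] \ge \left(1+\tfrac{\varepsilon}{528}\right)\EE[|RP_{i_+}|\mid A_{i_0}],
\]
where the two cases are combined using the monotonicity $RP_{i_+} \subseteq U \subseteq RP_{i_-}$, and $528$ comes from chaining the constants $\frac14,\frac12,\frac16,\frac12$ and similar factors.

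\textbf{Step 3 (averaging and the error term).} We integrate over $A_{i_0}$ conditioned on $\cE_{k-1}$. The outcomes where the expansion fails are those where $|U|$ is too small relative to $\phi_0$ (below $t^2\log^3 n$). They contribute an error. We bound it by $n\cdot\PP[\text{small}\mid\cE_{k-1}]$, where $\PP[\cE_{k-1}] \ge 4^{-k}$ by Claim \ref{assumption} and the $\exp(-\varepsilon L/176)$ factor comes from an iteration or decay estimate over the $L$ levels. I expect this to be the main obstacle: handling the interaction of $\phi_0,\phi_1$ with expansion uniformly, and producing exactly the stated error term.
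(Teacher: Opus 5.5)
There is a genuine gap in Step 2, and Step 3 does not locate the error term correctly.

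\textbf{Step 2 loses a factor of order $T$.} You use vertex expansion with only one witnessing edge per boundary vertex. But a single colour changes status between consecutive levels only with probability of order $1/T$. A colour of $A_{i_0}$ is deleted when passing to $A_{i_+}$ with probability exactly $1/T$, not $\Omega(1/j)$; Lemma \ref{cond prob}(ii) concerns a colour of $A_i$ lying in $A_{j-1}$, which is the adding direction. For adding, Lemma \ref{cond prob}(i) with $j-i=1$ gives only $1/(6T)$. Summing over $M_1$, or over the vertices incident to $M_2$, therefore gives a change of order $\varepsilon|U|/T$. No rescaling repairs this: over $L/4$ double steps it gives growth $\exp(O(\varepsilon/K))$ instead of the $\exp(10^k/2)$ needed in Lemma \ref{iterate}.

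What is missing is an \emph{edge}-robust dichotomy. Colour each edge $vv'$ with $v\in U$, $v'\notin U\cup\phi_0$ red if $v\notin RP(A_{i_0}\setminus\{\gamma(v,v')\})$ and blue otherwise; your $M_2/M_1$ split is only the vertex shadow of this. Then use the full edge-deletion robustness of Definition \ref{rse}, as in Lemma \ref{core}. This gives either at least $(\varepsilon/10)d(G)|U|$ red edges with degree at most $\lceil d(G)\rceil$ at each $v\in U$, or equally many blue edges with degree at most $\lceil d(G)\rceil$ at each outer vertex. In the red case, $d(G)\ge 12T$ and the colours at a vertex are distinct, so Lemma \ref{union} gives each $v\in U$ probability at least $\deg_{F_{\mathrm{red}}}(v)/(2\lceil d(G)\rceil)$ of not lying in $RP_{i_+}$. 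Summing over $U$ gives a loss of $(\varepsilon/21)|U|$.

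\textbf{The blue case ignores the conditioning on $\cE_{k-1}$.} Conditioning on $\cE_{k-1}$ fixes $A_{(k-1)L}$, whereas Lemma \ref{cond prob}(i) conditions only on the later set. Once $A_{(k-1)L}$ is fixed, a blue colour $c\notin A_{(k-1)L}$ lies in $A_{i_-}$ with probability $0$. So the per-outcome inequality at the end of your Step 2 is false in general.

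The paper handles this as follows.
\begin{itemize}
\item It asks for a type-II set: at least $(\varepsilon/132)L|U|$ blue-type edges whose colours lie in $A_{(k-1)L}$, with degree at most $2L$ at each outer vertex. Lemma \ref{cond prob}(ii) with $(i,j)=((k-1)L,kL-j)$ gives probability at least $1/(2L)$ per colour, and Lemma \ref{union} then gives $(\varepsilon/528)|U|$ new vertices in expectation.
\item The bad event $\mathcal F$ (a type-I set exists but no type-II set) has probability at most $15\exp(-\varepsilon L/176)$. This is shown by conditioning only on $A_{kL-j}$ and applying Lemma \ref{bip prob} to the events $\gamma(v,v')\in A_{(k-1)L}$. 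Each such event has probability at least $L/(12T)\ge L/\lceil d(G)\rceil$ by Lemma \ref{cond prob}(i).
\item On $\mathcal F$ only the trivial bound $-(\varepsilon/528)n$ is used.
\end{itemize}
The error term is therefore exactly $(\varepsilon/528)\,n\cdot\PP[\mathcal F]/\PP[\cE_{k-1}]$, with $\PP[\cE_{k-1}]\ge 4^{-k}$ from Claim \ref{assumption}.

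So the error term does not come from $|U|$ being small relative to $\phi_0$, nor from a decay over the $L$ levels. (The paper takes $|U|\ge |RP_{(K-1)L}|\ge t^2\log^3 n$, which gives $|\phi_0|\le\varepsilon|U|/40$ in Lemma \ref{core}.) It comes from the possibility that the colours of the blue edges were already discarded before level $(k-1)L$.
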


Now we are ready to show Lemma \ref{iterate} assuming Lemma \ref{expansion rate} is true. Note that
\begin{align*}
n \cdot \exp\left(-\frac{\varepsilon L}{176}\right) &\leq n \cdot \exp\left(-\frac{10^{k-1}}{2 \log n} \cdot \frac{10^5 \log n}{176}\right) \leq n \cdot \exp\left(-\frac{1}{2} \cdot 3 \cdot 10^k\right) \\
&\leq n \cdot \exp\left(-\frac{1}{2} \cdot 10^k\right) \cdot e^{-10k} \leq \frac{f(k)}{60 \cdot 16^k}.
\end{align*}

\noindent Claim \ref{assumption} (ii) implies
\[
15 \cdot 4^k \cdot \frac{\varepsilon}{528} \cdot n \cdot \exp\left(-\frac{\varepsilon L}{176}\right) \leq \frac{\varepsilon}{2112} \cdot \frac{f(k)}{4^k} \leq \frac{\varepsilon}{2112} \cdot \EE[|RP_{kL}|\mid \cE_{k-1}] \leq \frac{\varepsilon}{2112} \cdot \EE[|RP_{kL-j+1}|\mid \cE_{k-1}]
\]
for each \( j = 1, \dots, L/2 - 1 \). Thus, the conclusion of Lemma \ref{expansion rate} implies
\[
\EE[|RP_{kL-j-1}|\mid \cE_{k-1}] \geq \left(1 + \frac{\varepsilon}{704}\right) \cdot \EE[|RP_{kL-j+1}|\mid \cE_{k-1}]
\]
for each \( j = 1, \dots, L/2 - 1 \). By applying this iteratively to \( j = 1, 3, 5, \dots, L/2 - 1 \), we can deduce that
\[
\EE[|RP_{kL-L/2}|\mid \cE_{k-1}] \geq \left(1 + \frac{\varepsilon}{704}\right)^{L/4} \cdot \EE[|RP_{kL}|\mid \cE_{k-1}] \geq \left(1 + \frac{\varepsilon}{704}\right)^{L/4} \cdot \frac{f(k)}{4^k}.
\]
Note that for all real numbers \( 0 \leq t \leq 1 \), we have \( 1 + t \geq \exp(t/2) \). Hence, we have
\begin{align*}
\left(1 + \frac{\varepsilon}{704}\right)^{L/4} &\geq \exp\left(\frac{\varepsilon}{1408} \cdot \frac{L}{4}\right) \geq \exp\left(\frac{10^{k-1}}{2\log n} \cdot \frac{10^5 \log n}{4 \cdot 1408}\right) \geq \exp\left(\frac{1}{2} \cdot 10^k\right) \cdot \frac{3}{2} \cdot 4^k.
\end{align*}
So, we obtain
\begin{align*}
\EE[|RP_{kL-L/2}|\mid \cE_{k-1}] &\geq \left(1 + \frac{\varepsilon}{704}\right)^{L/4} \cdot \frac{f(k)}{4^k} \geq \exp\left(\frac{1}{2} \cdot 10^k\right) \cdot \frac{3}{2} \cdot 4^k \cdot \frac{f(k)}{4^k} \\
&= \frac{3}{2} \cdot \exp\left(\frac{1}{2} \cdot 10^k\right) \cdot f(k) = \frac{3}{2}n.
\end{align*}
But since always \( |RP_{kL-L/2}| \leq |V(G)| \leq n \), we must clearly have \( \EE[|RP_{kL-L/2}|\mid \cE_{k-1}] \leq n \). \medskip \hfill $\blacksquare$

The proof of Lemma \ref{expansion rate} relies on the following key lemma. In fact, this lemma is equivalent to the definition of $G$ being a robust sublinear expander. Moreover, it is a robust version of Lemma 6.1 in \cite{Alon2025}. Note that forbidding a color of some edge is equivalent to forbidding the vertices incident with this edge. Therefore, in the following, we only need to forbid the vertices. Let $\phi_0$ be the set of forbidden vertices with size $O(t^2\log n \cdot \log\log n)<\varepsilon \cdot |U|/40$, where $|U|>O(t^2\log^3 n)$.
\begin{lemma} \label{core}
Let $G$ be a robust sublinear expander. Let \( U \subseteq V(G) \) be a vertex subset of size \( |U| \leq n^{1-\varepsilon} \), where $\varepsilon$ is defined in \eqref{epi}. Let $|\phi_0|\leq  \varepsilon \cdot |U|/40$. Consider a (not necessarily proper) coloring of the edges of \( G \) between \( U \) and \( \overline{U}\setminus \phi_0 \) with the colors red and blue. Then, at least one of the following two statements holds.
\begin{itemize}
    \item[(i)] we can find a subset \( F_{\text{red}} \subseteq E(G) \) of red edges in \( G \) with \( |F_{\text{red}}| \geq (\varepsilon/10) \cdot d(G) \cdot |U| \) and \(\deg_{F_{\text{red}}}(v) \leq  \lceil d(G)\rceil \) for all \( v \in U \), or
    \item[(ii)] we can find a subset \( F_{\text{blue}} \subseteq E(G) \) of blue edges in \( G \) with \( |F_{\text{blue}}| \geq (\varepsilon/10) \cdot d(G) \cdot |U| \) and \(\deg_{F_{\text{blue}}}(v') \leq \lceil d(G)\rceil \) for all \( v' \in  \overline{U}\setminus \phi_0 \).
\end{itemize}
\end{lemma}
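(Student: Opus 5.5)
Write $d=d(G)$ and $D=\lceil d\rceil$. We prove Lemma~\ref{core} by contradiction, using the robust expansion property of Definition~\ref{rse} applied to $U$. The plan is to build an edge set $F$ with $|F|\le \frac{\varepsilon}{4} d|U|$ such that, after deleting $F$, almost no vertex of $\overline{U}\setminus\phi_0$ is adjacent to $U$. The leftover neighbours then number fewer than $\frac{\varepsilon}{4}|U|$, which contradicts the expansion of $U$.

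\textbf{Step 1: the red set.} Let $F_{\mathrm{red}}$ be a maximum set of red edges subject to $\deg_{F_{\mathrm{red}}}(v)\le D$ for every $v\in U$. This is a degree-constrained subgraph problem with constraints only on the $U$ side, so it is solved greedily: each $u\in U$ keeps $\min(D,\deg_{\mathrm{red}}(u))$ of its red edges. Suppose (i) fails, so $|F_{\mathrm{red}}|<(\varepsilon/10)d|U|$. Then every $u\in U$ has fewer than $D$ red edges in total, unless it is saturated. Saturated vertices number at most $|F_{\mathrm{red}}|/D\le (\varepsilon/10)|U|$. Consequently the set of all red edges not incident to a saturated vertex has size at most $|F_{\mathrm{red}}|<(\varepsilon/10)d|U|$.

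\textbf{Step 2: the blue set.} Define $F_{\mathrm{blue}}$ symmetrically, with the constraint $\deg_{F_{\mathrm{blue}}}(v')\le D$ for $v'\in\overline{U}\setminus\phi_0$. Each such $v'$ keeps $\min(D,\deg_{\mathrm{blue}}(v'))$ of its blue edges into $U$. Suppose (ii) also fails. Then the blue-saturated vertices $v'$ number at most $(\varepsilon/10)|U|$, and the blue edges at unsaturated vertices total fewer than $(\varepsilon/10)d|U|$.

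\textbf{Step 3: the contradiction.} Let $U'\subseteq U$ be $U$ minus the red-saturated vertices. Delete from $G$ the set $F$ consisting of all red edges at vertices of $U'$ and all blue edges at blue-unsaturated vertices of $\overline{U}\setminus\phi_0$. Then $|F|<(\varepsilon/5)d|U|\le(\varepsilon/4)d|U'|$ as long as $|U'|\ge\frac45|U|$, which holds since $|U'|\ge (1-\varepsilon/10)|U|$. In $G-F$, a neighbour of $U'$ lies in one of three sets: $\phi_0$ (at most $\varepsilon|U|/40$ vertices), the blue-saturated vertices (at most $(\varepsilon/10)|U|$), or the vertices of $U\setminus U'$ (at most $(\varepsilon/10)|U|$). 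These last ones arise because $N(U')$ may meet $U\setminus U'$, and those edges are uncoloured. The total is at most $(\tfrac{1}{40}+\tfrac15)\varepsilon|U|<\frac{\varepsilon}{4}|U'|$ after small constant adjustments. Since $|U'|\le|U|\le n^{1-\varepsilon}$, this contradicts Definition~\ref{rse}.

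The main obstacle is the constant bookkeeping: expansion is applied to $U'$ and not to $U$, so its own neighbourhood includes $U\setminus U'$, and the sum $\frac{1}{40}+\frac1{10}+\frac1{10}$ must stay below $\frac14(1-\frac{\varepsilon}{10})$. That is where the constants $1/10$ and $1/40$ come from. If the margins turn out too tight, the fallback is to apply expansion directly to $U$ instead. In that version, $F$ also contains the at most $(\varepsilon/10)|U|\cdot d$-ish edges at red-saturated vertices, and this edge count must be bounded using $\Delta$-type arguments. Since the degree there is not bounded by $d$, we would prefer the $U'$ route.
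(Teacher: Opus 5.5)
Your proof is correct and is essentially the paper's argument. Your greedy saturated/unsaturated split reproduces the paper's dichotomy between $F'_{\text{red}},F'_{\text{blue}}$ (edges at low-degree vertices) and $U_{\text{red}},V_{\text{blue}}$ (high-degree vertices); you then delete $F=F'_{\text{red}}\cup F'_{\text{blue}}$ and apply robust expansion to $U'=U\setminus U_{\text{red}}$ exactly as the paper does, and because the saturated-vertex bounds are strict you get $|N_{G-F}(U')|<\frac{9}{40}\varepsilon|U|\le\frac{\varepsilon}{4}|U'|$ (the correct count; the paper's $\varepsilon/20$ term should be $\varepsilon/40$), so the fallback you mention is unnecessary.
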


\begin{proof}
Let \( E_{\text{red}} \subseteq E(G) \) and \( E_{\text{blue}} \subseteq E(G) \) be the set of red and blue edges in \( G \), respectively. Define 
$F'_{\text{red}}:=\{(v, v') \in E_{\text{red}}:\deg_{E_{\text{red}}}(v) \leq d(G)\}$ and $F'_{\text{blue}}:=\{(v, v') \in E_{\text{blue}}:\deg_{E_{\text{blue}}}(v) \leq d(G)\}$. If \( |F'_{\text{red}}| \geq (\varepsilon/10) \cdot d(G) \cdot |U| \) or \( |F'_{\text{blue}}| \geq (\varepsilon/10) \cdot d(G) \cdot |U| \), then we are done. Hence, we may assume that \( |F'_{\text{red}}| < (\varepsilon/10) \cdot d(G) \cdot |U| \) and \( |F'_{\text{blue}}| < (\varepsilon/10) \cdot d(G) \cdot |U| \), and therefore, \( |F'_{\text{red}} \cup F'_{\text{blue}}| < (\varepsilon/5) \cdot d(G) \cdot |U| \).

Let $ U_{\text{red}}:=\{v \in U :\deg_{E_{\text{red}}}(v) \geq d(G)\}$ and $V_{\text{blue}}:=\{v' \in  \overline{U}\setminus \phi_0: \deg_{E_{\text{blue}}}(v') \geq d(G)\}$. Note that every edge \((v, v') \in E(G)\) with \( v \in U \setminus U_{\text{red}} \) and \( v' \in V(G) \setminus (U \cup V_{\text{blue}}\cup \phi_0) \) is contained in \( F'_{\text{red}} \) or \( F'_{\text{blue}} \). If \( |U_{\text{red}}| \geq (\varepsilon/10) \cdot |U| \), then we can choose a set \( F_{\text{red}} \) that satisfies (i) by choosing \(  \lceil d(G) \rceil \) edges \((v, v') \in E_{\text{red}}\) for each \( v \in U_{\text{red}}\). Similarly, if \( |V_{\text{blue}}| \geq (\varepsilon/10) \cdot |U| \), then we can choose a set \( F_{\text{blue}} \) that satisfies (ii) by choosing \( \lceil d(G) \rceil \) edges \((v, v') \in E_{\text{blue}}\) for each \( v' \in V_{\text{blue}}\). Hence, we may assume that \( |U_{\text{red}}| < (\varepsilon/10) \cdot |U| \) and \( |V_{\text{blue}}| < (\varepsilon/10) \cdot |U| \), and therefore, \( |U_{\text{red}} \cup V_{\text{blue}}| < (\varepsilon/5) \cdot |U|\).

Let \( U' = U \setminus U_{\text{red}} \), then \( |U'| \geq (1 - \varepsilon/10) \cdot |U| \geq (9/10) \cdot |U| \). Hence, setting \( F = F'_{\text{red}} \cup F'_{\text{blue}} \), we have \( N_{G-F}(U')\setminus \phi_0 \subseteq U_{\text{red}} \cup V_{\text{blue}} \) and hence \( |N_{G-F}(U')\setminus \phi_0| \leq |U_{\text{red}} \cup V_{\text{blue}}| < (\varepsilon/5) \cdot |U| \) and so $|N_{G-F}(U')| \leq (\varepsilon/5)\cdot |U|+(\varepsilon/20)\cdot |U| \leq (\varepsilon/4) \cdot |U'|$. On the other hand, \( |U'| \leq |U| \leq n^{1-\varepsilon} \) and \( |F| = |F'_{\text{red}} \cup F'_{\text{blue}}| < (\varepsilon/5) \cdot d(G) \cdot |U| <(\varepsilon/4) \cdot d(G) \cdot |U'| \), so this is a contradiction to \( G \) being a robust sublinear expander.
\end{proof}
We also need a lemma in \cite{Alon2025} concerning certain events assigned to the edges of some bipartite graph.
\begin{lemma}[\cite{Alon2025}]\label{bip prob}
Let \( 0 \leq \varepsilon \leq 1 \) and let \( 1 \leq \ell \leq d \) be integers, and consider a bipartite graph \( G \) with vertex set \( U \) on one side and vertex set \( W \) on the other side. Let us assume that \( G \) has at least \( \varepsilon d |U| \) edges, but that every vertex \( w \in W \) satisfies \(\deg(w) \leq d\). For every edge \( e \in E(G) \), consider an event \( \mathcal{E}_e \) which holds with probability at least \( \ell /d \). Suppose that for every vertex \( v \in U \cup W \), the events \( \mathcal{E}_e \) are mutually independent for all edges \( e \) incident with \( v \). Then, with probability at least \( 1 - 15 \exp(-\varepsilon \ell /16) \), there exists a subset \( E' \subseteq E(G) \) of size \( |E'| \geq (\varepsilon /12) \cdot \ell \cdot |U| \) such that \( \mathcal{E}_e \) holds for each \( e \in E' \) and such that for every vertex \( w \in W \), we have \(\deg_{E'}(w) \leq 2\ell\).
\end{lemma}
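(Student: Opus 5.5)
The plan is to prove this by a first-moment argument on the \emph{losses} rather than by concentration of a global sum. The hypotheses only give independence of the events $\mathcal{E}_e$ locally, at each vertex, so the total number of successful edges cannot be concentrated directly. What we can do is decompose the deficit into per-vertex error terms, each of which is a sum of independent indicators, and apply Markov's inequality (Lemma \ref{Markov}) to their sum.

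\textbf{Step 0: equalise the probabilities.} First reduce to the case where every event has probability exactly $\ell/d$. For each edge $e$, introduce an auxiliary independent coin $\xi_e$ with success probability $(\ell/d)/\mathbb{P}(\mathcal{E}_e)$, and replace $\mathcal{E}_e$ by $\mathcal{E}_e\cap\{\xi_e\}$. The new events imply the old ones, and local independence at every vertex is preserved. Call an edge \emph{good} if its (new) event holds.

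\textbf{Step 1: the $U$-side deficit.} For $v\in U$, let $X_v$ be the number of good edges at $v$. This is a sum of independent indicators with mean $\mu_v=\deg(v)\ell/d$, and $\sum_v\mu_v\ge \varepsilon\ell|U|$. Call $v$ \emph{heavy} if $\mu_v\ge \varepsilon\ell/4$. Light vertices carry total mean at most $\varepsilon\ell|U|/4$, so heavy vertices carry at least $\tfrac34\varepsilon\ell|U|$.

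For heavy $v$, the Chernoff bound (Lemma \ref{Chernoff}) gives $\mathbb{E}[(\mu_v/2-X_v)^+]\le (\mu_v/2)\,e^{-\mu_v/8}\le(\mu_v/2)\,e^{-\varepsilon\ell/32}$. Hence the expected total deficit $D_U=\sum_{v\text{ heavy}}(\mu_v/2-X_v)^+$ is at most $\tfrac12\sum_v\mu_v\, e^{-\varepsilon\ell/32}$. Since $X:=\sum_{v\text{ heavy}}X_v\ge \tfrac38\varepsilon\ell|U|\cdot\tfrac{\sum_v \mu_v}{\varepsilon\ell|U|}\cdot(\ldots)-D_U$, it is enough to show $D_U$ is at most a small fraction of $\sum_v\mu_v$. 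Markov's inequality does this: it shows $D_U\ge \tfrac1{16}\sum_v\mu_v$ with probability $O(e^{-\varepsilon\ell/32})$. So with that failure probability, there are at least roughly $\tfrac14\varepsilon\ell|U|$ good edges.

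\textbf{Step 2: the $W$-side cap.} For $w\in W$, let $Y_w$ be the number of good edges at $w$. Again this is a sum of independent indicators, now with mean $\deg(w)\ell/d\le \ell$ because $\deg(w)\le d$. Form $E'$ by keeping, for each $w$, all its good edges if $Y_w\le 2\ell$, and an arbitrary $2\ell$ of them otherwise. Then $\deg_{E'}(w)\le 2\ell$, and the number of discarded edges is $D_W=\sum_w (Y_w-2\ell)^+$.

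A Chernoff tail for $Y_w$, whose mean is at most $\ell$, bounds $\mathbb{E}[(Y_w-2\ell)^+]$ by $O\bigl((\deg(w)\ell/d+1)\,e^{-\ell/3}\bigr)$ restricted to vertices $w$ that actually have edges. To keep the comparison in terms of $\mu$, I would dominate $Y_w$ by a binomial with mean $\ell$ and use $\mathbb{E}[(Y_w-2\ell)^+]\le \mathbb{E}[Y_w\mathbf 1_{Y_w>2\ell}]$. By independence within $w$, this is at most $\mathbb{E}[Y_w]\cdot \mathbb{P}(\mathrm{Bin}\ge 2\ell-1)$. Summing over $w$ gives $\mathbb{E}[D_W]\le e^{-c\ell}\sum_v\mu_v$. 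Markov's inequality then shows $D_W\ge \tfrac1{100}\sum_v\mu_v$ with probability $O(e^{-c\ell})\le O(e^{-\varepsilon\ell/16})$.

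\textbf{Step 3: combine.} With the failure probabilities from Steps 1 and 2 added (the constants can be arranged to total $15\exp(-\varepsilon\ell/16)$), we get $|E'|\ge X-D_W\ge(\varepsilon/12)\ell|U|$. Every edge of $E'$ is good and has $W$-degree at most $2\ell$.

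The main obstacle is matching the exact exponent $\varepsilon\ell/16$ and constant $15$. Step 1 naturally produces $e^{-\varepsilon\ell/32}$, so I would refine it. Options are a sharper Chernoff threshold (deficit below $(1-\delta)\mu_v$ with a tuned $\delta$), a dyadic decomposition of heavy vertices by $\mu_v$ with a union over the classes (which plausibly explains the constant $15$), or a higher heavy threshold. The structural argument of local independence, a per-vertex deficit, and Markov's inequality is what I would commit to.
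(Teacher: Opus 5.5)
Your architecture is sound. Step~0 is genuinely needed: without it the expected number of good edges at $w\in W$ need not be bounded by $\ell$. Per-vertex Chernoff bounds, followed by Markov's inequality on the summed deficits and a union bound, is the right way to use independence that holds only at single vertices. (The paper gives no proof of this lemma; it imports it from Alon, Buci\'c, Sauermann, Zakharov and Zamir.)

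The gap is that, as written, you do not reach the stated bound. With heavy threshold $\mu_v\ge\varepsilon\ell/4$ and $\delta=1/2$, Step~1 only gives failure probability $8e^{-\varepsilon\ell/32}$. No choice of constants in Markov's inequality turns this into $O(e^{-\varepsilon\ell/16})$, so the claim in Step~3 is unjustified, and you leave the repair open. No dyadic decomposition is needed. The missing step is simply to put the threshold at $\mu_v\ge\varepsilon\ell/2$, i.e.\ $\deg(v)\ge\varepsilon d/2$.

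With that choice, let $H=\sum_{v\text{ heavy}}\mu_v$.

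The $U$-side: light vertices contribute less than $\varepsilon\ell|U|/2\le\frac{1}{2}\sum_v\mu_v$, so $H\ge\frac{1}{2}\sum_v\mu_v\ge\varepsilon\ell|U|/2$. For heavy $v$, Lemma~\ref{Chernoff} with $\delta=1/2$ gives $\mathbb{P}[X_v<\mu_v/2]<e^{-\mu_v/8}\le e^{-\varepsilon\ell/16}$. Hence $\mathbb{E}[D_U]\le\frac{H}{2}e^{-\varepsilon\ell/16}$ and $\mathbb{P}[D_U\ge H/6]\le 3e^{-\varepsilon\ell/16}$. Off this event, $X\ge H/2-D_U>H/3$.

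The $W$-side: discard your first bound, since the $+1$ term sums to something not comparable to $\sum_v\mu_v$. Keep only the size-biased bound, stated correctly as $\mathbb{E}[(Y_w-2\ell)^+]\le\sum_{e\ni w}\mathbb{P}(\mathcal{E}_e)\,\mathbb{P}[Y_w^{(-e)}\ge 2\ell]$. Here $Y_w^{(-e)}$ counts the other good edges at $w$, is independent of $\mathcal{E}_e$, and is dominated by $\mathrm{Bin}(d,\ell/d)$. The upper tail in Lemma~\ref{Chernoff} with $\mu=\ell$ and $\delta=1-1/(2\ell)\ge 1/2$ gives $p:=\mathbb{P}[\mathrm{Bin}(d,\ell/d)\ge 2\ell]<e^{-\ell/10}\le e^{-\varepsilon\ell/16}$ for every $\ell\ge 1$. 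So $\mathbb{E}[D_W]\le p\sum_v\mu_v\le 2pH$ and $\mathbb{P}[D_W\ge H/6]\le 12p$.

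Combining: off both bad events, $|E'|\ge X-D_W>H/3-H/6=H/6\ge\varepsilon\ell|U|/12$. The failure probability is at most $3e^{-\varepsilon\ell/16}+12e^{-\varepsilon\ell/16}=15e^{-\varepsilon\ell/16}$. The constants $1/12$ and $15=3+12$ in the statement come out of exactly this bookkeeping.
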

\medskip
\indent For convenience, denote $RP_{\phi_1}(v_i,A_{l} \setminus \gamma(v, v'))$ by $RP(A_{l} \setminus \gamma(v, v'))$ for $l\in [T]$. Now we define two types of edge sets. 
\begin{itemize}
    \item[(i)] An edge set \( F \subseteq E(G) \) is called \textit{type-I} if 
\[
F\subseteq \{(v,v')\in E(G):v \in RP(A_{kL-j} \setminus \gamma(v, v')),v' \notin (RP_{kL-j} \cup\phi_0)\}
\]
with \( |F| \geq (\varepsilon / 10) \cdot d(G) \cdot |RP_{kL-j}| \) and \( \deg_F(v') \leq \lceil d(G)\rceil \) for all \( v' \notin (RP_{kL-j} \cup\phi_0)\).
    \item[(ii)] An edge set \( F \subseteq E(G) \) is called \textit{type-II} if 
    \[
F\subseteq \{(v,v')\in E(G): v \in RP(A_{kL-j} \setminus \gamma(v, v')),v' \notin (RP_{kL-j} \cup\phi_0),\gamma(v, v') \in A_{(k-1)L}   \}
\]
with \( |F| \geq (\varepsilon / 132) \cdot L \cdot |RP_{kL-j}| \) and \( \deg_{F}(v') \leq 2L \) for all \( v' \notin (RP_{kL-j} \cup\phi_0) \). 
\end{itemize}
We say that the event \( \mathcal F \) holds if there exists a type-I set $F$, but no type-II set $F'$ exists.\medskip

We will deduce the following three lemmas, which play an important role in the proof of Lemma \ref{expansion rate}. Let \( \overline{\mathcal F} \) denote the complementary event to \(\mathcal F \). The first lemma states that $\mathcal F$ is very unlikely. The next two lemmas establish the expansion rate of the thinning process conditioned on different events. 
\begin{lemma}\label{c1}
\(\PP[\mathcal F] \leq 15 \exp \left( -\frac{\varepsilon L}{176} \right).\)
\end{lemma}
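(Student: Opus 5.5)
We will mirror the corresponding argument in \cite{Alon2025}: condition on the outcome of $A_{kL-j}$ and bound the probability that a type-I set exists but no type-II set exists, using Lemma~\ref{cond prob}(i) together with Lemma~\ref{bip prob}.

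\emph{Conditioning.} Fix any outcome of $A_{kL-j}$ for which a type-I set $F$ exists. The definition of a type-I set depends only on $A_{kL-j}$ (through $RP_{kL-j}$ and the sets $RP(A_{kL-j}\setminus\gamma(v,v'))$), so it suffices to prove the bound conditionally on every such outcome. Conditioned on $A_{kL-j}$, the earlier sets $A_{(k-1)L}\supseteq\cdots\supseteq A_{kL-j}$ are generated by independent decisions for each color. Consequently, the events $\mathcal{E}_e=\{\gamma(e)\in A_{(k-1)L}\}$ for $e\in F$ are determined color by color. Because the coloring is proper, edges sharing an endpoint have distinct colors. Hence for each vertex the events $\mathcal{E}_e$ on its incident edges are mutually independent, which is exactly the hypothesis of Lemma~\ref{bip prob}.

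\emph{Probability of each event.} Apply Lemma~\ref{cond prob}(i) with $i=(k-1)L$, $j'=kL-j$ and $1-p=1/T$. For every color $x$,
\[
\PP[x\in A_{(k-1)L}\mid A_{kL-j}]\ \ge\ \frac{L-j}{6T}\ \ge\ \frac{L}{12T}.
\]
This uses $j\le L/2$. We now want this lower bound in the form $\ell/d$ with $d=\lceil d(G)\rceil$. Since $T\le d(G)/(12\log\log n)$, we have $L/(12T)\ge L/d$ up to constants, so we can take $\ell\approx L$. To be safe, we take $\ell$ to be an integer of order $L$, chosen so that the constants match $(\varepsilon/132)L$ and $2L$ in the definition of type-II.

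\emph{Applying Lemma~\ref{bip prob}.} Consider the bipartite graph whose edge set is $F$, with $U=RP_{kL-j}$ on one side and $W$ the vertices outside $RP_{kL-j}\cup\phi_0$ on the other. By the type-I property, $F$ has at least $(\varepsilon/10)\,d(G)\,|U|$ edges and every vertex of $W$ has $F$-degree at most $d$. Lemma~\ref{bip prob}, applied with $\varepsilon/10$ in place of $\varepsilon$ and with $\ell$ as above, gives the following: with probability at least $1-15\exp(-\varepsilon\ell/160)$ there is $E'\subseteq F$ with $|E'|\ge(\varepsilon/120)\,\ell\,|U|$, every edge of $E'$ having its color in $A_{(k-1)L}$, and $\deg_{E'}(w)\le 2\ell$ for all $w\in W$.

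Choosing $\ell=\lceil 10L/11\rceil$ turns these bounds into $|E'|\ge(\varepsilon/132)L|U|$, $\deg_{E'}(w)\le 2L$, and failure probability $15\exp(-\varepsilon L/176)$. So $E'$ is a type-II set, and averaging over the outcomes of $A_{kL-j}$ gives $\PP[\mathcal F]\le 15\exp(-\varepsilon L/176)$.

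The main obstacle is verifying that $\ell\le d$ and $\ell/d\le L/(12T)$ hold simultaneously. Equivalently, we need $d(G)\gtrsim 132\,T$ up to the constant, which is guaranteed by $T\le d(G)/(12\log\log n)$ with room to spare. We also need to check carefully that the per-vertex independence survives the conditioning, which follows from the coordinatewise independence in the construction of Lemma~\ref{cond prob}.
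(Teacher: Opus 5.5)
Your argument is the paper's proof step for step. You condition on $A_{kL-j}$, use Lemma~\ref{cond prob}(i) to get $\PP[\gamma(e)\in A_{(k-1)L}\mid A_{kL-j}]\ge L/(12T)$, obtain per-vertex independence from properness together with the color-by-color independence of the thinning, and apply Lemma~\ref{bip prob} to produce a type-II set.

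There is one bookkeeping slip. Lemma~\ref{bip prob} needs a single integer $d$ that both bounds the degrees on the $W$ side and satisfies $|F|\ge \varepsilon' d\,|U|$. The type-I degree bound forces $d=\lceil d(G)\rceil$. However, the type-I size bound only gives $|F|\ge(\varepsilon/10)\,d(G)\,|U|$, not $(\varepsilon/10)\lceil d(G)\rceil\,|U|$. So the lemma cannot be invoked with $\varepsilon'=\varepsilon/10$ as you do. Moreover, you tuned $\ell=\lceil 10L/11\rceil$ to hit $176$ and $132$ exactly, so no slack is left to absorb this rounding.

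The paper's repair is to note that $(\varepsilon/10)\,d(G)\ge(\varepsilon/11)\lceil d(G)\rceil$ and to apply the lemma with $\varepsilon'=\varepsilon/11$ and $\ell=L$. This requires only $L/(12T)\ge L/\lceil d(G)\rceil$, i.e.\ $d(G)\ge 12T$. It yields exactly the failure probability $15\exp(-\varepsilon L/176)$, the size bound $|E'|\ge(\varepsilon/132)L|U|$, and the degree bound $\deg_{E'}(w)\le 2L$.

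Your stated requirement ``$d(G)\gtrsim 132T$'' is also not the right condition, though this is harmless since $d(G)\ge 12T\log\log n$.
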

\begin{proof}
It suffices to prove $\mathbb{P}[\mathcal{F} \mid A_{kL-j}] \leq 15 \exp(-\varepsilon L/176)$ for every possible outcome of $A_{kL-j}$. So, let us fix any outcome of $A_{kL-j}$. We may assume that there exists a type-I set $F \subseteq E(G)$. For otherwise, if $A_{kL-j}$ does not admit any type-I set, then $\mathbb{P}[\mathcal{F} \mid A_{kL-j}] = 0$. Let $U = RP_{kL-j}$, then 
\[|F| \geq (\varepsilon/10) \cdot d(G) \cdot |U| \geq (\varepsilon/11) \cdot \lceil d(G) \rceil \cdot |U|,\]
and every edge in $F$ is of the form $(v, v')$ with $v \in U$ and $v' \in V(G) \setminus (U \cup \phi_0)$ and $v \in RP(A_{kL-j} \setminus \{ \gamma(v,v') \}) $. Furthermore, we have $\deg_F(v') \leq \lceil d(G) \rceil $ for all $v' \in V(G) \setminus (U \cup \phi_0)$.

We can now apply Lemma \ref{bip prob} to the bipartite graph with edge set $F$ between the vertex sets $U$ and $V(G) \setminus (U \cup \phi_0)$, where for each edge $(v, v') \in F$, we consider the event that $\gamma(v,v') \in A_{(k-1)L}$. These events are independent for edges of different colors, in particular, for edges at the same vertex. Furthermore, for every edge $(v, v') \in F$, conditioned on our fixed outcome of $A_{kL-j}$, by Lemma \ref{cond prob} (i) (applied with $p = 1 - 1/T$), and using the fact that $j \leq L/2$ and $d(G) \geq 12T$, we have 
\[
\PP[\gamma(v,v') \in A_{(k-1)L}]\geq \frac{L-j}{6T}\geq \frac{L}{12T}\geq \frac{L}{\lceil d(G) \rceil}.
\]

Lemma \ref{bip prob} now shows that with probability at least $1-15 \exp(-(\varepsilon/11)L/16) = 1-15 \exp(-\varepsilon L/176)$ between the vertex sets $U$ and $V(G) \setminus (U \cup \phi_0)$, there exists a subset $F' \subseteq F \subseteq E(G)$ of size $|F'| \geq (\varepsilon/11) \cdot L \cdot |U|/12 =(\varepsilon/132) \cdot L \cdot |U|$ such that $\gamma(v,v') \in A_{(k-1)L}$ for all $(v,v') \in F'$ and $\deg_{F'}(v') \leq 2L$ for all $v' \in V(G) \setminus (U \cup \phi_0)$. Hence, $F'$ is a type-II set. Note that $\mathcal{F}$ does not hold if such a set $F'$ exists. This shows that 
\[
\PP[\mathcal{F}]\leq 1-\PP[F'\text{ exists}]\leq 15 \exp(-\varepsilon L/176),
\]
as desired.
\end{proof}
\begin{lemma}\label{c2}
 We have \(\PP[\cE_{k-1} \text{ and } \overline{\mathcal F}] > 0\) and
\[
\EE \left[ |RP_{kL-j-1}| - \left( 1 + \frac{\varepsilon}{528} \right) \cdot |RP_{kL-j+1}| \bigg| \cE_{k-1}, \overline{\mathcal F} \right] \geq 0.
\]
\end{lemma}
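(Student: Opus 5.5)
Throughout, fix $j$ and condition on an outcome of $A_{kL-j}$ (and of $A_{(k-1)L}$, which determines $\cE_{k-1}$). The argument is the analogue of the corresponding step in \cite{Alon2025}, with the forbidden set $\phi_0$ carried along via Lemma \ref{core}. Write $U=RP_{kL-j}$.

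\textbf{Positivity of the conditioning event.} By Claim \ref{assumption}(i), $\PP[\cE_{k-1}]\ge 4^{-k}$. By Lemma \ref{c1}, $\PP[\mathcal F]\le 15\exp(-\varepsilon L/176)$, which is far smaller than $4^{-k}$ because $\varepsilon L\ge 10^{k+4}/2$. Hence $\PP[\cE_{k-1}\cap\overline{\mathcal F}]>0$.

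\textbf{Applying Lemma \ref{core}.} Under $\cE_{k-1}$ we have $|U|\le |RP_{(k-1)L}|<f(k-1)=n^{1-\varepsilon}$, and $|\phi_0|\le\varepsilon|U|/40$ in the relevant regime. Colour each edge $vv'$ with $v\in U$ and $v'\notin U\cup\phi_0$ as follows:
\begin{itemize}
\item red if $v\in RP(A_{kL-j}\setminus\gamma(v,v'))$, i.e.\ some rainbow path to $v$ avoids the colour $\gamma(v,v')$;
\item blue otherwise, i.e.\ every admissible path to $v$ uses $\gamma(v,v')$, which forces $\gamma(v,v')\in A_{kL-j}$.
\end{itemize}
Lemma \ref{core} then yields one of two alternatives.

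\textbf{Alternative (i): red edges, expanding to $RP_{kL-j-1}$.} Here a type-I set exists, and since $\overline{\mathcal F}$ holds, a type-II set $F'$ exists as well. Each $F'$-edge $vv'$ has $\gamma(v,v')\in A_{(k-1)L}$. Using Lemma \ref{cond prob}(ii) conditionally on $A_{(k-1)L}$ and $A_{kL-j}$, the edge colour lies in $A_{kL-j-1}\setminus A_{kL-j}$ with probability about $1/L$. In that event, $v'$ joins $RP_{kL-j-1}$: the path to $v$ avoiding $\gamma(v,v')$ extends by the edge $vv'$. Because $\deg_{F'}(v')\le 2L$, Lemma \ref{union} shows each such $v'$ enters with probability $\Omega(\deg_{F'}(v')/L)$. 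Summing over $v'$ gives
\[
\EE\bigl[|RP_{kL-j-1}\setminus U|\bigr]\gtrsim |F'|/L\ge(\varepsilon/528)|U|.
\]

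\textbf{Alternative (ii): blue edges, shrinking to $RP_{kL-j+1}$.} Here $F_{\mathrm{blue}}$ has at least $(\varepsilon/10)d|U|$ edges, with degree at most $\lceil d\rceil$ at the $v'$ side. By Lemma \ref{cond prob}(i), each blue colour is deleted when passing to $A_{kL-j+1}$ with probability about $1/T$. Every such deletion removes the corresponding $v$ from $RP_{kL-j+1}$. Since colours are proper, one must avoid overcounting: a single deleted colour can kill several $v$, but each $v$ is counted through its own edge, and degrees are bounded. Using $T\le d/12$, the expected loss is at least $(\varepsilon/528)|U|$.

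\textbf{Combining the two cases.} Either way,
\[
\EE\bigl[|RP_{kL-j-1}|-(1+\varepsilon/528)|RP_{kL-j+1}|\bigm| A_{kL-j}\bigr]\ge 0,
\]
using that $RP_{kL-j+1}\subseteq U\subseteq RP_{kL-j-1}$. Averaging over outcomes consistent with $\cE_{k-1}\cap\overline{\mathcal F}$ gives the claim.

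\textbf{Main obstacle.} The difficulty is the dependence created by conditioning simultaneously on $A_{kL-j}$, on $\cE_{k-1}$ (a function of $A_{(k-1)L}$), and on $\overline{\mathcal F}$. Lemma \ref{cond prob}(ii) is designed to handle conditioning on both endpoints $A_i$ and $A_j$. The plan is to condition on $A_{(k-1)L}$ and $A_{kL-j}$ jointly, check that $\overline{\mathcal F}$ is measurable with respect to these two sets, and then apply the per-edge probabilities with the independence across distinct colours.
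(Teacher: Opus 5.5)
Your positivity argument and overall structure match the paper: you condition on $A_{(k-1)L}$ and $A_{kL-j}$, split via Lemma \ref{core} into a shrinking case and an expanding case, and use $RP_{kL-j+1}\subseteq U\subseteq RP_{kL-j-1}$. But your use of Lemma \ref{core} fails as written. That lemma is not symmetric in the two colours. Alternative (i) bounds the degrees of the red set on the $U$ side, and alternative (ii) bounds the degrees of the blue set on the $\overline U\setminus\phi_0$ side.

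You coloured the \emph{non-essential} edges red (those with $v\in RP(A_{kL-j}\setminus\gamma(v,v'))$), which is the reverse of the paper. As a result, each alternative comes with its degree bound on the wrong side.
\begin{itemize}
\item In your Alternative (i) you get non-essential edges with $\deg(v)\le\lceil d(G)\rceil$ for $v\in U$. This is not a type-I set, since type-I requires $\deg_F(v')\le\lceil d(G)\rceil$ at the outside vertices. So $\overline{\mathcal F}$ does not give you a type-II set $F'$. This is a real obstruction, not a formality: without an outside-side bound the edges could all land on a few $v'$ of huge degree, giving almost no new vertices.
\item In your Alternative (ii) you get essential edges with the bound on the $v'$ side, but the shrinking count needs control at each $v\in U$. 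The bound $\PP[v\notin RP_{kL-j+1}]\ge 1-(1-1/T)^{\deg_{\mathrm{blue}}(v)}$ is only $\Omega(\deg_{\mathrm{blue}}(v)/T)$ when $\deg_{\mathrm{blue}}(v)\lesssim T$. If the $(\varepsilon/10)d(G)|U|$ blue edges are concentrated on a few $v\in U$ of very large degree, the expected loss is far below $(\varepsilon/528)|U|$. Your remark about avoiding overcounting does not address this, because a bound at the $v'$ endpoints says nothing about how the edges spread over $U$.
\end{itemize}

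The repair is to use the paper's colouring: red if $v\notin RP(A_{kL-j}\setminus\{\gamma(v,v')\})$ (the colour is essential for $v$, hence lies in $A_{kL-j}$), blue otherwise.
\begin{itemize}
\item Lemma \ref{core}(i) then gives essential edges with $\deg_{F_{\mathrm{red}}}(v)\le\lceil d(G)\rceil$ for all $v\in U$. Each colour of $A_{kL-j}$ is dropped in passing to $A_{kL-j+1}$ with probability exactly $1/T\ge 1/\lceil d(G)\rceil$. This is the forward step of the construction, not Lemma \ref{cond prob}(i). Applying Lemma \ref{union} at each $v\in U$ gives $\EE|U\setminus RP_{kL-j+1}|\ge |F_{\mathrm{red}}|/(2\lceil d(G)\rceil)\ge(\varepsilon/21)|U|$.
\item Lemma \ref{core}(ii) gives non-essential edges with the outside-side degree bound, which is exactly a type-I set. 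Then $\overline{\mathcal F}$ supplies the type-II set, and your expansion argument via Lemma \ref{cond prob}(ii) and Lemma \ref{union} at each $v'$ goes through as you wrote it.
\end{itemize}
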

\begin{proof}
By Claim \ref{assumption} (i) and Lemma \ref{c1}, we have
\[
\mathbb{P}[\mathcal{F}] \leq 15 \exp\left(-\frac{\varepsilon L}{176}\right) \leq 15 \exp\left(-\frac{10^{k-1}}{2 \cdot \log n} \cdot \frac{10^5 \log n}{176}\right) \leq 15 e^{-10k} < 1/4^k<\mathbb{P}[\cE_{k-1}] .
\]
Hence, $\mathbb{P}[\cE_{k-1} \text{ and } \overline{\mathcal{F}}] > 0$. 

For the second part, it suffices to prove that
\[
\mathbb{E} \left[ |RP_{kL-j-1}| - \left(1 + \frac{\varepsilon}{528}\right) \cdot |RP_{kL-j+1}| \bigg| A_{(k-1)L}, A_{kL-j} \right] \geq 0
\]
for every possible outcome of $A_{(k-1)L}$ and $A_{kL-j}$ satisfying $\cE_{k-1}$ and $\overline{\mathcal{F}}$. So, let us fix $A_{(k-1)L} \supseteq A_{kL-j}$ satisfying $\cE_{k-1}$ and $\overline{\mathcal{F}}$. Let $U = RP_{kL-j}$. By Claim \ref{assumption} (i), we have $|U| \leq |RP_{(k-1)L}| < f(k-1) = n^{1-\varepsilon}$. 

We now construct an auxiliary red-blue edge-coloring on the bipartite graph between $U$ and $V(G) \setminus (U \cup \phi_0)$. For an edge $(v, v') \in E(G)$ with $v \in U$ and $v' \in V(G) \setminus (U \cup \phi_0)$, let us color $(v, v')$ 
\begin{itemize}
    \item[(i)] red if $v \notin RP(A_{kL-j} \setminus \{\gamma(v, v')\}) $;
    \item[(ii)] blue otherwise.
\end{itemize}
Now, at least one of the two alternatives in Lemma \ref{core} must hold, and we split into cases accordingly.\medskip 

\noindent \textbf{Case 1. Lemma \ref{core} (i) holds.} In this case, there exists a subset $F_{\text{red}} \subseteq E(G)$ of red edges in $G$ with $|F_{\text{red}}| \geq (\varepsilon /10) \cdot d(G) \cdot |U|$ and $\deg_{F_{\text{red}}}(v) \leq \lceil d(G)\rceil$ for all $v \in U$.

Note that $\PP[\gamma(v,v^{\prime}) \notin A_{kL-j+1}\mid A_{(k-1)L}, A_{kL-j}]  ]\geq 1/T \geq 1/\lceil d(G)\rceil$. Furthermore, these events $\gamma(v,v^{\prime}) \notin A_{kL-j+1}$ are independent for edges $(v,v^{\prime})$ of different colors, even after conditioning on $A_{(k-1)L}$ and $A_{kL-j}$. Thus, for each $v \in U$, we can apply Lemma \ref{cond prob} to the events $\gamma(v,v^{\prime}) \notin A_{kL-j+1}$ for the $\deg_{F_{\text{red}}}(v) \leq \lceil d(G)\rceil$ different edges $(v,v^{\prime}) \in F_{\text{red}}$, yielding that
\begin{equation} \label{case1pro}
 \mathbb{P}[\gamma(v,v^{\prime}) \notin A_{kL-j+1} \text{ for some edge } (v,v^{\prime}) \in F_{\text{red}} \text{ at } v \mid A_{(k-1)L}, A_{kL-j}] \geq \frac{\deg_{F_{\text{red}}}(v)}{2\lceil d(G)\rceil}.   
\end{equation}

\begin{claim}\label{cred}
If $(v,v^{\prime}) \in F_{\text{red}}$ satisfies $\gamma(v,v^{\prime}) \notin A_{kL-j+1}$, then $v \notin (RP_{kL-j+1} \cup \phi_0)$.
\end{claim}
Indeed, since $\gamma(v,v^{\prime}) \notin A_{kL-j+1}$, we have $A_{kL-j+1} \subseteq A_{kL-j} \setminus \{\gamma(v,v^{\prime})\}$ and so $RP_{kL-j+1} \subseteq RP(A_{kL-j} \setminus \{\gamma(v,v^{\prime})\}) $. The red coloring of $(v,v^{\prime})$ means $v \notin RP(A_{kL-j} \setminus \{\gamma(v,v^{\prime})\})$, which yields $v \notin (RP_{kL-j+1} \cup \phi_0)$. \hfill $\blacksquare$

\noindent Thus, by Claim \ref{cred} and \eqref{case1pro}, we have
\[
\mathbb{P}[v \notin (RP_{kL-j+1} \cup \phi_0) \mid A_{(k-1)L}, A_{kL-j}] \geq \frac{\deg_{F_{\text{red}}}(v)}{2\lceil d(G)\rceil},
\]
for every $v \in U$. Summing this up for all $v \in U$ yields
\begin{align*}
\mathbb{E}[|U \setminus RP_{kL-j+1}| \mid A_{(k-1)L}, A_{kL-j}] &= \sum_{v \in U} \mathbb{P}[v \notin (RP_{kL-j+1} \cup \phi_0) \mid A_{(k-1)L}, A_{kL-j}] \\
&\geq \sum_{v \in U} \frac{\deg_{F_{\text{red}}}(v)}{2\lceil d(G)\rceil} = \frac{|F_{\text{red}}|}{2\lceil d(G)\rceil} \geq \frac{(\varepsilon /10) \cdot d(G) \cdot |U|}{2\lceil d(G)\rceil} \geq \frac{\varepsilon}{21} \cdot |U|,
\end{align*}
where the final inequality uses $\lceil d(G)\rceil \leq 21/20 \cdot d(G)$ for sufficiently large $n$. Since $RP_{kL-j+1} \subseteq U$, we conclude
\[
\mathbb{E}[|RP_{kL-j+1}| \mid A_{(k-1)L}, A_{kL-j}] = |U| - \mathbb{E}[|U \setminus RP_{kL-j+1}| \mid A_{(k-1)L}, A_{kL-j}] \leq \left(1 - \frac{\varepsilon}{21}\right) \cdot |U|.
\]
On the other hand, we always have $RP_{kL-j-1} \supseteq U$, and hence the same holds in expectation. Thus,
\[
\mathbb{E}[|RP_{kL-j-1}| - \left(1 + \frac{\varepsilon}{528}\right) \cdot |RP_{kL-j+1}| \mid A_{(k-1)L}, A_{kL-j}] \geq |U| - \left(1 + \frac{\varepsilon}{528}\right) \cdot \left(1 - \frac{\varepsilon}{21}\right) \cdot |U| \geq 0,
\]
as desired.
\medskip \\
\noindent  \textbf{Case 2. Lemma \ref{core} (ii) holds.} In this case, there exists a subset $F_{\text{blue}} \subseteq E(G)$ of blue edges in $G$ with $|F_{\text{blue}}| \geq (\varepsilon/10) \cdot d(G) \cdot |U|$ and $\deg_{F_{\text{blue}}}(v') \leq \lceil d(G)\rceil$ for all $v' \in V(G) \setminus (U \cup \phi_0)$, which means that $F_{\text{blue}}$ is type-I. Since we choose outcomes of $A_{(k-1)L}$ and $A_{kL-j}$ that do not satisfy $\mathcal{F}$, this means that there exists a set $F' \subseteq E(G)$ of type-II with $|F'| \geq (\varepsilon/132) \cdot L \cdot |U|$ and $\deg_{F'}(v') \leq 2L$ for all $v' \in V(G) \setminus (U \cup \phi_0)$, such that each edge in $F'$ is of the form $(v,v')$ with $v \in RP(A_{kL-j} \setminus \{\gamma(v,v')\}) \subseteq U$ and $v' \notin (U \cup\phi_0)$ and satisfies $\gamma(v,v') \in A_{(k-1)L}$.

We claim that for every $(v,v') \in F'$, the probability (conditioning on $A_{(k-1)L}$ and $A_{kL-j}$) of having $\gamma(v,v') \in A_{kL-j-1}$ is at least $1/(2L)$. Indeed, since $(v,v') \in F'$, we have $\gamma(v,v') \in A_{(k-1)L}$. So, by Lemma \ref{cond prob} (ii) (applied with $p = 1 - 1/T$), the probability of having $\gamma(v,v') \in A_{kL-j-1}$ is at least $(T/(L-j)) \cdot 1/(2T) \geq 1/(2L)$. Furthermore, these events $\gamma(v,v') \in A_{kL-j-1}$ are independent for edges $(v,v')$ of different colors. Now, for each $v' \in V(G) \setminus (U \cup \phi_0)$, we can apply Lemma \ref{union} to the events $\gamma(v,v') \in A_{kL-j-1}$ for the $\deg_{F'}(v') \leq 2L$ different edges $(v,v') \in F'$. Thus, for every $v' \in V(G) \setminus (U \cup \phi_0)$, we have
\begin{equation} \label{case2pro}
\mathbb{P}[\gamma(v,v') \in A_{kL-j-1} \text{ for some edge } (v,v') \in F' \text{ at } v' | A_{(k-1)L}, A_{kL-j}] \geq \frac{\deg_{F'}(v')}{4L}.
\end{equation}

\begin{claim}\label{cblue}
If $(v,v') \in F'$ satisfies $\gamma(v,v') \in A_{kL-j-1}$, then $v' \in RP_{kL-j-1}$.
\end{claim}
Indeed, as $(v,v') \in F'$, we have $v \in RP(A_{kL-j} \setminus \{\gamma(v,v')\}) $. This means that there exists a rainbow walk from $x$ to $v$ with colors in $A_{kL-j} \setminus \{\gamma(v,v')\} \subseteq A_{kL-j-1} \setminus \{\gamma(v,v')\}$. Adding the edge $(v,v')$ gives a rainbow walk from $x$ to $v'$ with colors in $A_{kL-j-1}$ (since $\gamma(v,v') \in A_{kL-j-1}$). Thus, $v' \in RP_{kL-j-1}$. \hfill $\blacksquare$

\noindent Thus, by Claim \ref{cblue} and \eqref{case2pro}, we have
\[
\mathbb{P}[v' \in RP_{kL-j-1} | A_{(k-1)L}, A_{kL-j}] \geq \frac{\deg_{F'}(v')}{4L},
\]
for every $v' \in V(G) \setminus (U \cup \phi_0)$. Summing this up for all $v' \in V(G) \setminus (U \cup \phi_0)$ yields
\begin{align*}
\mathbb{E}[|RP_{kL-j-1} \setminus U| | A_{(k-1)L}, A_{kL-j}] &= \sum_{v' \in V(G) \setminus U} \mathbb{P}[v' \in U_{kL-j-1} | A_{(k-1)L}, A_{kL-j}] \\
&\geq \sum_{v' \in V(G) \setminus U} \frac{\deg_{F'}(v')}{4L} = \frac{|F'|}{4L} \geq \frac{(\varepsilon/132) \cdot L \cdot |U|}{4L} \geq \frac{\varepsilon}{528} \cdot |U|.
\end{align*}

\noindent Since $RP_{kL-j-1} \supseteq  U$, we have
\[
\mathbb{E}[|RP_{kL-j-1}| | A_{(k-1)L}, A_{kL-j}] = |U| + \mathbb{E}[|RP_{kL-j-1} \setminus U| | A_{(k-1)L}, A_{kL-j}] \geq \left(1 + \frac{\varepsilon}{528}\right) \cdot |U|.
\]
\noindent On the other hand, we always have $RP_{kL-j+1} \subseteq  U$, and so $\mathbb{E}[|RP_{kL-j+1}| | A_{(k-1)L}, A_{kL-j}] \leq |U|$. Thus,
\[
\mathbb{E}[|RP_{kL-j-1}| - \left(1 + \frac{\varepsilon}{528}\right) \cdot |RP_{kL-j+1}| | A_{(k-1)L}, A_{kL-j}] \geq \left(1 + \frac{\varepsilon}{528}\right) \cdot |U| - \left(1 + \frac{\varepsilon}{528}\right) \cdot |U| \geq 0,
\]
as desired. 
\end{proof}
\begin{lemma} \label{c3}
If \(\PP[\cE_{k-1} \text{ and } \mathcal F] > 0\), then
\[
\EE \left[ |RP_{kL-j-1}| - \left( 1 + \frac{\varepsilon}{528} \right) \cdot |RP_{kL-j+1}| \bigg| \cE_{k-1}, \mathcal F \right] \geq -\frac{\varepsilon}{528} \cdot n.
\]
\end{lemma}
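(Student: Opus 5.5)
This is a trivial pointwise bound that uses only the nesting $A_{kL-j-1}\supseteq A_{kL-j}\supseteq A_{kL-j+1}$; no probabilistic input is needed. The plan is to show that the random variable inside the expectation is at least $-\frac{\varepsilon}{528}\cdot n$ for every outcome. It then follows that its conditional expectation on any event of positive probability, in particular on $\cE_{k-1}\cap\mathcal F$, is at least this value.

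\textbf{Step 1: monotonicity.} Since $RP_{\phi_1}(v_i,A)$ is monotone in the colour set $A$, the nesting gives
\[
RP_{kL-j-1}\supseteq RP_{kL-j}\supseteq RP_{kL-j+1},
\]
after removing $\phi_0$ from each side. Hence $|RP_{kL-j-1}|\ge |RP_{kL-j+1}|$ holds surely.

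\textbf{Step 2: the pointwise bound.} Writing $X=|RP_{kL-j+1}|$, we have
\[
|RP_{kL-j-1}|-\left(1+\frac{\varepsilon}{528}\right)X \;\ge\; X-\left(1+\frac{\varepsilon}{528}\right)X \;=\; -\frac{\varepsilon}{528}X \;\ge\; -\frac{\varepsilon}{528}\,n,
\]
where the last step uses $X\le |V(G)|=n$ and $\varepsilon\ge 0$.

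\textbf{Step 3: take expectations.} Taking conditional expectation given $\cE_{k-1}$ and $\mathcal F$, which is well defined by the hypothesis $\PP[\cE_{k-1}\text{ and }\mathcal F]>0$, preserves the inequality and yields the claim.

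The only point needing care is Step 1: the inclusion $RP(A)\subseteq RP(A')$ for $A\subseteq A'$ is immediate, because a rainbow path with colours in $A\setminus\phi_1$ also has colours in $A'\setminus\phi_1$. There is no real obstacle here.
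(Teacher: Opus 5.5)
Your proposal is correct and matches the paper's proof. The paper also bounds the random variable pointwise by $-\frac{\varepsilon}{528}|RP_{kL-j+1}|\ge -\frac{\varepsilon}{528}n$, using $RP_{kL-j-1}\supseteq RP_{kL-j+1}$, and uses the hypothesis $\PP[\cE_{k-1}\text{ and }\mathcal F]>0$ only to make the conditional expectation well defined; your explicit check that $RP_{\phi_1}(v_i,\cdot)$ is monotone in the colour set fills in a step the paper leaves implicit.
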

\begin{proof}
Note that
\[
|RP_{kL-j-1}| - \left(1 + \frac{\varepsilon}{528}\right) \cdot |RP_{kL-j+1}| \geq -\frac{\varepsilon}{528}|RP_{kL-j+1}|\geq -\frac{\varepsilon}{528}n.
\]
Since \(\PP[\cE_{k-1} \text{ and } \mathcal F] > 0\), the conditional expectation is well defined and so we complete our proof.
\end{proof}
Let us show how these lemmas imply Lemma \ref{expansion rate}.
\begin{proof}[Proof of Lemma \ref{expansion rate} assuming Lemmas \ref{c1},\ref{c2} and \ref{c3}]
If \(\PP[\cE_{k-1} \text{ and } \mathcal F] > 0\), then we have
\begin{align*}
&\EE \left[ |RP_{kL-j-1}| - \left( 1 + \frac{\varepsilon}{528} \right) \cdot |RP_{kL-j+1}| \bigg| \cE_{k-1} \right] \\
&= \EE \left[ |RP_{kL-j-1}| - \left( 1 + \frac{\varepsilon}{528} \right) \cdot |RP_{kL-j+1}| \bigg| \cE_{k-1}, \mathcal F \right] \cdot \PP[\mathcal F \mid \cE_{k-1}] \\
&\quad + \EE \left[ |RP_{kL-j-1}| - \left( 1 + \frac{\varepsilon}{528} \right) \cdot |RP_{kL-j+1}| \bigg| \cE_{k-1}, \overline{\mathcal F} \right] \cdot \PP[\overline{\mathcal F} \mid \cE_{k-1}] \\
&\geq -\frac{\varepsilon}{528} \cdot n \cdot \PP[\mathcal F \mid \cE_{k-1}] + 0 \cdot \PP[\overline{\mathcal F} \mid \cE_{k-1}] \geq -\frac{\varepsilon}{528} \cdot n \cdot \frac{\PP[\mathcal F]}{\PP[\cE_{k-1}]} \geq -15 \cdot 4^k \cdot \frac{\varepsilon}{528} \cdot n \cdot \exp \left( -\frac{\varepsilon L}{176} \right).
\end{align*}
If \(\PP[\cE_{k-1} \text{ and } \mathcal F] = 0\), then we have
\[
\EE \left[ |RP_{kL-j-1}| - \left( 1 + \frac{\varepsilon}{528} \right) \cdot |RP_{kL-j+1}| \bigg| \cE_{k-1} \right] = \EE \left[ |RP_{kL-j-1}| - \left( 1 + \frac{\varepsilon}{528} \right) \cdot |RP_{kL-j+1}| \bigg| \cE_{k-1}, \overline{\mathcal F} \right] \geq 0.
\]
So, in both cases, we obtain
\[
\EE \left[ |RP_{kL-j-1}| - \left( 1 + \frac{\varepsilon}{528} \right) \cdot |RP_{kL-j+1}| \bigg| \cE_{k-1} \right] \geq -15 \cdot 4^k \cdot \frac{\varepsilon}{528} \cdot n \cdot \exp \left( -\frac{\varepsilon L}{176}\right),
\]
as desired.
\end{proof}

\section{Applications} \label{s7}
In this section, we discuss several applications of Theorem \ref{main result}.
\subsection{Additive combinatorics}
\textbf{Small doubling.} We prove Theorem \ref{small-doubling} here. The asymmetric variant of Theorem \ref{small-doubling} can be seen in \cite{Alon2025}.
\begin{proof}[Proof of Theorem \ref{small-doubling}]
Let \( A \subseteq \Gamma \) with \( |A \cdot A| \leq K|A| \) and let \( S \) be a dissociated subset of \( A \) of maximum size. Then \( |S| = \dim A \). We will show that \( |S| \leq O(\log^{1+o(1)} |A|) \). Define a bipartite graph \( H \) with vertex partitions \( A \) and \( B = A \cdot S = \{as : a \in A, s \in S\} \) where \((a, b)\in A \times B\) is an edge of \( H \) if \( b = as \) for some \( s \in S \). Also, assign color \( s \) to this edge. This gives a proper edge-coloring of \( H \).

Note that \( |V(H)| \leq |A| + |A \cdot A| \leq O(|A|)\), and \(e( H)= |A||S|\). Suppose, for contradiction, that \( |S| > C_K \log |A| (\log \log |A|)^6 \) for a sufficiently large constant \( C_K \) depending on \( K \). Then we can use Theorem \ref{main result} to find a rainbow clique subdivision, and in particular, a rainbow cycle in \( G \). Since \( H \) is bipartite, any cycle has even length. A rainbow cycle with distinct colors \( s_1, s_2, \ldots, s_{2\ell} \) appearing in that order implies
$s_1 s_2^{-1} s_3 s_4^{-1} \cdots s_{2\ell-1} s_{2\ell}^{-1} = e$. This contradicts that \( S \) is dissociated and shows that \( |S| \leq  O(\log^{1+o(1)} |A|) \).
\end{proof} 
\noindent \textbf{Large convolution.} Now we prove Theorem \ref{convolution}.
\begin{proof}[Proof of Theorem \ref{convolution}]
Assume $|A|\geq |B|$. Let \( L \subseteq S \) be a largest dissociated subset of \( S \), so that \( |L| = \dim(S) \). Consider a bipartite graph \( G = (V, E) \) with parts \( A \) and \( B \), where each edge is colored by an element of \( L \). Specifically, a vertex \( a \in A \) is connected to a vertex \( b \in B \) by an edge colored \( \lambda \in L \) if and only if 
$a - b = \lambda$. By definition of \( S \), for each \( \lambda \in L \subseteq S \), we have
$(A * (-B))(\lambda) \geq \sigma$, so there are at least \( \sigma \) pairs \( (a, b) \in A \times B \) with \( a - b = \lambda \). Therefore, the total number of edges satisfies $|E| \geq \sigma |L|$. Note that
$|V| = |A| + |B| \leq 2|A|$.

Suppose $|L| > C \cdot |A| \cdot \sigma^{-1} \cdot \log |A|\cdot (\log \log |A|)^6$, for a sufficiently large constant \( C \). Then the average degree of \( G \) is at least $C \log |A|\cdot (\log \log |A|)^6$.
By Theorem \ref{main result}, \(  G \) contains a rainbow clique subdivision, and thus a rainbow cycle. Now we can derive a contradiction similar to the proof of Theorem \ref{small-doubling}. Therefore, we have
$|L| \le |A| \cdot \sigma^{-1} \cdot O(\log^{1+o(1)} |A|)$.
\end{proof}

\subsection{Number theory}
In this subsection, we prove Theorem \ref{Bhg}.
\begin{proof}[Proof of Theorem \ref{Bhg}]
Let us consider a bipartite graph on $V_1 \cup V_2$ where $V_i$ is a copy of $\mathbb{Z}_n$ for $i \in [2]$. For every $b\in B$ and every $s\in \mathbb{Z}_n$, add an edge between vertex $s \in V_1$ and vertex $s+b\in V_2$ and color this edge with color $b$. This way, we obtain an $|B|$-regular graph with a proper edge-coloring (with $|B|$ colors). If $G$ contains no rainbow $TK_t$ for every $t$, then by Corollary \ref{coro1}, we have $|B|=d(G)\leq (\log n)^{1+o(1)}$.

Now suppose $G$ contains a rainbow $TK_t$, for some $t\geq 3$. 
Fix any rainbow cycle, let $b_1,\ldots, b_m\in B$ be the colors on the edges of the cycle in the order they appear on the cycle, starting at some vertex $s\in V_1$. Note that $m$ is even (as the graph is bipartite) and the elements $b_1,\ldots, b_m\in B$ are distinct (as the cycle is rainbow). We must have $s=s+b_1-b_2+b_3-b_4+\ldots+b_{m-1}-b_m$. Thus, we obtain $b_1+b_3+\ldots+b_{m-1}=b_2+b_4+\ldots +b_m$, where $m\leq \log n \log\log n$. Hence, we construct a set $B'=\{b_1, ..., b_m\}$, which is not a $B_{m/2}[1]$-set. This completes our proof.
\end{proof}

\section*{Acknowledgements}
We thank Ruonan Li, Minghui Ouyang, Tianchi Yang and Chi Hoi Yip for helpful discussions.

\end{document}